\documentclass[a4paper]{amsart}

\usepackage[shortlabels]{enumitem}
\setlist[enumerate]{label={\arabic*.}}

\usepackage[dvipsnames]{xcolor}
\usepackage[backref]{hyperref}
\hypersetup{colorlinks=true,linkcolor=Maroon,citecolor=OliveGreen}
\usepackage{mathabx}
\setlist[description]{font=\normalfont\emph,leftmargin=*}
\usepackage{xfrac}
\usepackage{stmaryrd}
\usepackage{comment}
\usepackage{booktabs}
\usepackage{aliascnt}

\usepackage{amssymb}
\usepackage{amsthm}
\usepackage{mathtools}
\usepackage{amsmath,amsfonts,amsthm,mathtools,commath,amssymb}

\usepackage{cleveref}
\usepackage{microtype}

\newtheorem{theorem}{Theorem}[section]
\crefname{theorem}{theorem}{theorems}
\Crefname{theorem}{Theorem}{Theorems}

\newcommand{\newsharedtheorem}[4]{%
  \newaliascnt{#1}{theorem}%
  \newtheorem{#1}[#1]{#2}%
  \aliascntresetthe{#1}%
  \crefname{#1}{#3}{#4}%
  \Crefname{#1}{#2}{#2s}%
}

\newsharedtheorem{lemma}{Lemma}{lemma}{lemmas}
\newsharedtheorem{proposition}{Proposition}{proposition}{propositions}
\newsharedtheorem{corollary}{Corollary}{corollary}{corollaries}

\newtheorem*{theorem*}{Theorem}
\newtheorem*{example*}{Example}

\theoremstyle{definition}

\newtheorem{example}[theorem]{Example}

\newtheorem*{conjecture*}{Conjecture}

\usepackage{todonotes}

\usepackage{bbm}
\def\1{\mathbf{1}}
\def\F{\mathbf{F}}

\def\N{\mathbf{N}}

\def\Z{\mathbf{Z}}
\def\P{\mathbf{P}}
\def\E{\mathbf{E}}

\renewcommand{\hom}{\mathrm{Hom}}

\DeclareMathOperator{\Mat}{M}
\DeclareMathOperator{\End}{End}

\DeclareMathOperator{\codim}{codim}

\DeclareMathOperator{\slfrak}{\mathfrak{sl}}
\DeclareMathOperator{\glfrak}{\mathfrak{gl}}

\DeclareMathOperator{\pslfrak}{\mathfrak{psl}}
\DeclareMathOperator{\diag}{diag}
\DeclareMathOperator{\tr}{Tr}

\DeclareMathOperator{\GL}{GL}

\DeclareMathOperator{\Fr}{Fr}

\DeclareMathOperator{\mult}{mult}
\DeclareMathOperator{\ad}{ad}

\DeclareMathOperator{\Diff}{Diff}
\DeclareMathOperator{\Sum}{Sum}
\DeclareMathOperator{\Roots}{Roots}
\DeclareMathOperator{\Irr}{Irr}

\DeclareMathOperator{\id}{id}

\begin{document}
\baselineskip=13pt 

\title[Random generation of $\slfrak_n$ over finite fields]{Random generation of the special linear Lie algebra over finite fields}

\author{Urban Jezernik}
\address{Urban Jezernik, Faculty of Mathematics and Physics, University of Ljubljana, Jadranska 21, 1000 Ljubljana, Slovenia; Institute of Mathematics, Physics, and Mechanics, Jadranska 19, 1000 Ljubljana, Slovenia}
\email{urban.jezernik@fmf.uni-lj.si}

\author{Andoni Zozaya}
\address{Andoni Zozaya, Department of Statistics, Computer Science and Mathematics, Public University of Navarre, Campus of Arrosadia, 31006 Pamplona, Spain; INAMAT$^2$, Campus of Arrosadia, 31006 Pamplona, Spain}
\email{andoni.zozaya@unavarra.es}

\thanks{UJ acknowledges support of the Slovenian Research Agency (P1-0222, J1-50001, J1-70033).}

\begin{abstract}
  We prove that the special linear Lie algebra $\slfrak_n(\F_q)$ over a finite field of characteristic $p$ is generated by two random elements with high probability as $\abs{\slfrak_n(\F_q)}$ tends to infinity, provided that $(n,p) \neq (3,3), (4,2)$.
\end{abstract}

\maketitle


\section{Introduction}
\label{sec:introduction}

\subsection{Two-generation}
 
Across the simple objects in algebra, very few elements are needed to generate the whole structure. In fact, \emph{a pair typically suffices}. Over an algebraically closed field, the matrix algebra is generated by any two matrices with no common invariant subspace by Burnside's irreducibility theorem, a simple algebraic group is topologically $2$-generated \cite{guralnick1998some}, and a simple Lie algebra is $2$-generated in characteristic zero \cite{kuranishi1951everywhere} and in characteristic $p > 3$ \cite{bois2009generators}. Similar results hold over finite fields. Matrix algebras admit explicit generating pairs \cite{petrenko2007pairs}, and finite simple groups are $2$-generated \cite{steinberg1962generators,aschbacher1984some} in general. Finite simple Lie algebras are comparatively underexplored. For $\slfrak_n(\F_q)$ we showed $2$-generation in \cite{cantor2024two} with exceptions $(n,p) = (3,3), (4,2)$, where $p$ is the characteristic of $\F_q$.
 
\subsection{Random two-generation}

It turns out that often \emph{a typical pair suffices}. Over an algebraically closed field this is immediate: the generating pairs form a nonempty Zariski open set, so a generic pair generates. Over finite fields the corresponding results are deeper. For finite simple groups, two uniformly random elements generate with probability tending to $1$ as the group grows, by Kantor and Lubotzky \cite{kantor1990probability} for the classical groups and by Liebeck and Shalev \cite{liebeck1995probability} in general. The mechanism is that a random pair fails only when both elements lie in a common maximal subgroup, and the classification of finite simple groups makes it possible to control this situation. More recently, a classification-free route for classical groups was found \cite{eberhard2020random, eberhard2022babai}, relying instead on random walks and character estimates. For matrix algebras, the analogous statement is also known \cite{neumann1995cyclic, sercombe2024random}, again resting on the classification of maximal subalgebras. For Lie algebras over finite fields, less is known for structural reasons. The maximal subalgebras of even the most classical $\slfrak_n(\F_q)$ are not classified, so the argument used for groups and matrix algebras has no footing. Nor does the alternative group theoretic route via random walks have an evident Lie algebra counterpart. A different approach is required.

\subsection{Main result}
 
In this paper, we show random generation of the special linear Lie algebra by a pair of its elements.

\begin{theorem} \label{thm:main}
Assume $(n,p) \neq (3,3), (4,2)$. Two uniformly random elements of $\slfrak_n(\F_q)$ generate the Lie algebra $\slfrak_n(\F_q)$ with probability $1 - o(1)$ as $\abs{\slfrak_n(\F_q)} \to \infty$.
\end{theorem}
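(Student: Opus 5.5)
The plan is to avoid maximal subalgebras entirely. We work with an explicit, algebraically defined property of pairs that forces generation, and show it holds for a random pair with probability $1-o(1)$. The natural route is through a random regular semisimple element. Let $x\in\slfrak_n(\F_q)$ be uniformly random. The first step shows that, with probability $1-o(1)$ as $\abs{\slfrak_n(\F_q)}\to\infty$, the element $x$ is regular semisimple with characteristic polynomial irreducible over $\F_q$. We also want the eigenvalues $\lambda_1,\dots,\lambda_n\in\F_{q^n}$ to have pairwise distinct differences $\lambda_i-\lambda_j$ ($i\neq j$), up to the degeneracies forced by small $p$.

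The second step handles the resulting eigenvalue structure. For such $x$, the centralizer $\mathfrak h=C(x)$ is a Cartan subalgebra, a form of the diagonal torus that is anisotropic of Coxeter type. Over $\F_{q^n}$ we then have a root space decomposition $\slfrak_n=\mathfrak h\oplus\bigoplus_{i\neq j}\F E_{ij}$. The eigenvalues of $\ad x$ on the root spaces are the $\lambda_i-\lambda_j$. Distinctness of these differences means the associative algebra generated by $\ad x$ contains the projections onto $\mathfrak h$ and onto each root line.

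Heuristically, this is a Chebotarev-type count. The proportion of elements with irreducible characteristic polynomial is about $1/n$, which is not $1-o(1)$. So we cannot insist on irreducibility. Instead we require only that $x$ be regular semisimple with distinct root values $\lambda_i-\lambda_j$ (and $\lambda_i-\lambda_j\neq 0$). The complement of this set is contained in a hypersurface of bounded degree (a discriminant of the characteristic polynomial of $\ad x$ restricted off $\mathfrak h$, of degree $O(n^4)$). The Schwartz–Zippel lemma gives failure probability $O(n^4/q)$, which is fine when $q\gg n^4$. When $n$ is large relative to $q$, we need a separate count over characteristic polynomials, using the equidistribution of characteristic polynomials of random matrices. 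This splitting into the regimes "$q$ large" and "$n$ large" is where I expect real work.

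Given such $x$, the third step shows that the subalgebra $L=\langle x,y\rangle$ is graded. Since $L$ is $\ad x$-invariant and the projections are polynomials in $\ad x$ (working over $\F_q$, we take Galois-orbit sums of projections), $L$ contains the components of $y$ in each $\F_q$-rational isotypic piece. For generic $y$, all root components $y_{ij}$ are nonzero and the $\mathfrak h$-component is nonzero. After extending scalars to $\F_{q^n}$, $L\otimes\F_{q^n}$ contains every $E_{ij}$ (up to scalar, as root lines are one-dimensional), because a Galois orbit sum together with $\ad x$ separates them. The $E_{ij}$ generate $\slfrak_n$ whenever $n\ge 2$ and $p$ is not exceptional. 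Generation descends from $\F_{q^n}$ to $\F_q$ by dimension. The condition on $y$ is that $n^2-n$ coordinates in an eigenbasis are nonzero. This fails with probability at most $n^2/q$, which is again bad when $n$ is large.

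The main obstacle is therefore the regime where $q$ stays bounded (for instance $q=2$) and $n\to\infty$. There, nonvanishing of all $y_{ij}$ fails almost surely, and $x$ has repeated root values. The fix I would try first is to group eigenvalues by Galois orbits: irreducible factors of the characteristic polynomial of $x$. We would then require only that the $\ad x$-isotypic components $y_{[i][j]}$ be nonzero blocks generating suitable subspaces. Next we use that a random $x$ has few irreducible factors of small degree, with about $\log n$ factors in total (Flajolet–Odlyzko-type statistics for random polynomials). We then argue block-by-block that the generated subalgebra contains $\slfrak$ of each block and all off-diagonal bimodules, which are irreducible modules over $\F_{q^{d_i}}\otimes\F_{q^{d_j}}$. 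Each block is nonzero with probability $1-q^{-d_id_j}$. Blocks with small $d_i$ cause the failure mass. The exceptions $(3,3),(4,2)$ enter only where $\slfrak_n$ fails to be generated by root-type pieces, consistent with \cite{cantor2024two}.
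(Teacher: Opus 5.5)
Your Schwartz--Zippel argument works in the regime $q/n^4\to\infty$ for odd $p$. The product of the forms $\lambda_i-\lambda_j-\lambda_k+\lambda_l$ is symmetric in the eigenvalues, so it is a polynomial of degree $O(n^4)$ in the entries of $x$. It vanishes identically on $\slfrak_n$ exactly when $p=2$ or $(n,p)=(3,3)$: for $n=p=3$ one has $2\lambda_1-\lambda_2-\lambda_3=3\lambda_1=0$ on trace zero, which is how that exception shows up in your approach. For bounded $n$ and odd $p$ this is a self-contained alternative to the paper's appeal to \cite{barbieri2025diameter}.

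The gap is that this is only the easy regime. You give no argument for $n\to\infty$ with $q$ not huge compared to $n^4$ (in particular bounded $q$), which is where the theorem has content. You also give none for $p=2$, where $\lambda_i-\lambda_j=\lambda_j-\lambda_i$ makes your hypersurface the whole space even for large $q$.

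Your block-by-block fix cannot close this gap, because it still needs every part of the spectrum to behave. For bounded $q$, each of the following happens with probability bounded away from $0$:
\begin{enumerate}
\item $\chi_x$ has a repeated root. Then $x$ need not be semisimple, and the $\ad_x$-weight spaces attached to that root are not lines, so projections of $y$ give no elementary matrices there.
\item $\chi_x$ has roots in $\F_q$ with coinciding differences, such as $1-0=2-1=0-2$ in $\F_3$. Then the $\ad_x$-isotypic components merge different pairs of blocks.
\item $\chi_x$ has two linear factors whose connecting component of $y$ vanishes, which happens with probability $1/q$.
\end{enumerate}
So a sufficient condition requiring all blocks and bimodules to be hit fails with probability at least some $c(q)>0$, not $o(1)$. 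You note that small blocks carry the failure mass but offer no way around them. Also, inside one Galois block the $y_{ij}$ are not independent. In a Frobenius-adapted eigenbasis they equal $\Fr^i(Z_{j-i})$ for $m-1$ independent uniform $Z_d$, so what must be controlled is the set of nonzero cyclic diagonals, not $m^2-m$ separate entries.

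The missing idea is to ignore the small part of the spectrum entirely.
\begin{enumerate}
\item The paper asks only for one irreducible factor $f\mid\chi_x$ of degree $m>C\log n$ each of whose root differences occurs once in $\Diff(\chi_x)$; the rest of $\chi_x$ may be arbitrarily degenerate.
\item Such an $f$ exists with probability $1-n^{-\gamma}$. This comes from counting smooth trace-zero polynomials and bounding repeated differences via kernels of the linearized polynomials $\Fr^i-\Fr^j-\Fr^k+1$. Reiner's formula transfers these counts to matrices at a loss of $O(n^4)$.
\item Non-semisimplicity is removed by replacing $x$ with $x^{q^N}$, which is allowed because $\ad_{x^{q^N}}=\ad_x^{q^N}$ in the $p$-restricted algebra.
\item Vandermonde extraction on the block of $f$ then gives $\slfrak_m(\overline{\F}_q)$ once the nonzero diagonals generate $\Z/m\Z$.
\item The block spreads to everything by two global facts, not by more extraction. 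The pair acts absolutely irreducibly with probability $1-O(nq^{1-n})$. An irreducible subalgebra containing $\slfrak(M)$ with $\dim M\ge 3$ is all of $\slfrak(V)$ (\Cref{thm:block-irred}).
\end{enumerate}
In characteristic $2$ the paper uses sums $\lambda_i+\lambda_j$, extracts symmetric pairs $Y_{ij}E_{ij}+Y_{ji}E_{ji}$, and breaks the symmetry with the cubic $T_{ijk}$. Bounded $n$ in characteristic $2$ still needs a separate argument, such as the Zariski-open one based on $2$-generation \cite{cantor2024two}.
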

 
The excluded pairs $(3,3), (4,2)$ are precisely those for which $\slfrak_n(\F_q)$ fails to be $2$-generated at all \cite{cantor2024two}. The main content of the theorem lies in the regime when $n$ tends to infinity.\footnote{In fact, it follows from our proof that the probability of failure to generate decays faster than any polynomial in $n^{-1}$ in this regime.} This is because when $n$ is fixed and $q \to \infty$, the result is known to hold for all classical Lie algebras \cite[Proposition 4.5]{barbieri2025diameter} by a soft argument. In brief, when $\slfrak_n(\F_q)$ is $2$-generated, the generating pairs form a nonempty Zariski open set over $\overline{\F}_q$, and a nonempty open set cut out by equations of bounded degree depending only on $n$ is numerically dense over $\F_q$ as $q \to \infty$. This reasoning collapses when $q$ is a fixed small prime, because the degrees of the defining equations grow with $n$, and the open set might contain very few $\F_q$-points. Algebraic geometry is silent here and one must work with the structure of $\slfrak_n(\F_q)$ directly. In order to establish the above theorem, we therefore proceed by more direct means, organised around the factorisation of the characteristic polynomial of a random matrix, and particularly the additive relations among its roots.
 
\subsection{Method of proof}
 
Let $A, B \in \slfrak_n(\F_q)$ be a random pair, and let $L = \langle A, B \rangle$ be the Lie subalgebra they generate. Our argument runs as follows.
 
We begin by setting up an extraction mechanism for elementary matrices. A suitable Frobenius power of $A$ is semisimple with the same eigenvalues as $A$, and since $\slfrak_n(\F_q)$ is $p$-restricted, every iterated bracket with this power still lies in $L$. We may therefore replace $A$ by this power and assume it diagonal over $\overline{\F}_q$, so that the adjoint of $A$ scales each elementary matrix $E_{ij}$ by the eigenvalue difference $\lambda_i - \lambda_j$. Applying the adjoint repeatedly to $B$ and solving the resulting Vandermonde system isolates, for each eigenvalue difference $\delta$, the projection of $B$ onto the positions where $\lambda_i - \lambda_j = \delta$. When $\delta$ is attained by a single pair of indices, this projection is a single elementary matrix $E_{ij}$, which we thereby place in $L$ provided the entry $B_{ij}$ is nonzero. Extracting $E_{ij}$ thus rests on two independent features of the pair: the difference $\lambda_i - \lambda_j$ should be unique (a condition on $A$ alone) and the corresponding entry $B_{ij}$ should be nonzero (a condition on $B$ relative to the eigenbasis of $A$).
 
The first feature is the core of the paper. We show that, with high probability, the characteristic polynomial $\chi_A$ has an irreducible factor of degree of order $\log n$, of multiplicity one, all of whose root differences $\lambda_i - \lambda_j$ are pairwise distinct. This boils down to a counting problem. We use Reiner's formula \cite{reiner1961number} to reduce the enumeration of trace zero matrices with a given characteristic polynomial to that of the polynomials themselves. After that, we rely on estimates for smooth polynomials \cite{hildebrand1993integers,kuttner2022enumeration} to show that $\chi_A$ rarely has only small factors, and subsequently on the theory of linearized polynomials \cite{LN} to show that a largish irreducible factor rarely has a repeated difference of roots. This idea of exploiting distinct differences goes back to Steinberg \cite{steinberg1962generators} and was already crucial in our previous work \cite{cantor2024two}.
 
We then turn to the second feature, the entries of $B$ on the eigenblock attached to such a factor. The change of basis diagonalising $A$ is determined by $A$ alone, so these entries are no longer independent or uniform. Nevertheless they obey a precise law, governed by uniformly random elements of a finite extension of $\F_q$ connected by Frobenius powers. By this block law the nonzero entries occupy the cyclic diagonals indexed by a random set $D$, and as soon as $D$ generates the cyclic group, which happens with high probability, the extracted elementary matrices propagate to a full special linear block $\slfrak_m(\overline{\F}_q)$ with $m \to \infty$ as $n \to \infty$.
 
Finally, we let this block spread to the whole algebra. We show that a random pair $A,B$ acts absolutely irreducibly on the underlying module by passing to the associative algebra the pair generates and invoking the classical theory of Schur, Wedderburn and Jacobson. After that, we show that an irreducibly acting subalgebra of $\slfrak_n(\overline{\F}_q)$ containing a single special linear block must in fact be all of $\slfrak_n(\overline{\F}_q)$. This completes the proof in odd characteristic.
 
In characteristic $2$ the eigenvalue differences degenerate, since $\lambda_i - \lambda_j = \lambda_j - \lambda_i$, so no difference is ever unique and the Vandermonde step recovers only a linear combination of elementary matrices $E_{ij}$ and $E_{ji}$. We adapt the argument by working throughout with unordered sums $\lambda_i + \lambda_j$ instead, and we break the symmetry between $E_{ij}$ and $E_{ji}$ through an additional argument. The remaining parts of the proof work just as in odd characteristic.
 
\subsection{Reader's guide}
 
\Cref{sec:extraction} sets up the extraction mechanism: after replacing one matrix by a semisimple power and diagonalising it, the adjoint action together with a Vandermonde argument isolates an elementary matrix at each position carrying a unique eigenvalue difference and a nonzero entry of the second matrix. \Cref{sec:differences-of-roots} studies the differences of roots of irreducible polynomials over finite fields, and uses linearized polynomials to show that a repeated difference is rare. \Cref{sec:char-poly-of-random-matrices} combines this with estimates for smooth polynomials of prescribed trace to produce, with high probability, an irreducible factor of the characteristic polynomial of intermediate degree, of multiplicity one, and with all root differences distinct. \Cref{sec:block-generation} determines the joint law of the entries of the second matrix on such a block and shows that the elementary matrices supported on its live cyclic diagonals generate the whole block. \Cref{sec:obtaining-a-largish-block} assembles these inputs to exhibit a large special linear block inside the generated algebra. \Cref{sec:absolute-irreducibility} shows that a random pair acts absolutely irreducibly. \Cref{sec:largish-block-and-irreducibility-give-everything} contains the structural result that a block together with irreducibility forces the whole algebra, completing the proof in odd characteristic. \Cref{sec:even-characteristic} treats characteristic $2$, where differences of eigenvalues are replaced by sums and the resulting symmetry is broken by hand.

\section{Extracting elementary matrices}
\label{sec:extraction}

Throughout this section $A, B \in \slfrak_n(\F_q)$ are a pair of matrices, and we write $\langle A, B \rangle$ for the Lie subalgebra they generate. We describe the mechanism by which iterated brackets of $A$ and $B$ produce elementary matrices. It rests on making one of the two matrices semisimple and reading off the action of its adjoint in an eigenbasis. As we will see, extracting a given elementary matrix requires control of two independent features of the pair, the differences of the eigenvalues of $A$ and the vanishing of the entries of $B$.

\subsection{Making a matrix semisimple}

We first arrange for one of the matrices to be diagonalizable over $\overline{\F}_q$, at no cost to the generated Lie algebra.

\begin{lemma} \label{power_diagonalizable}
Let $A \in \Mat_n(\F_q)$. Then there is an integer $N \geq 0$ such that $A^{q^N}$ is semisimple with the same eigenvalues as $A$.
\end{lemma}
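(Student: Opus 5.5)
We will use the Jordan--Chevalley decomposition over $\F_q$. Since $\F_q$ is perfect, $A = S + U$ with $S$ semisimple, $U$ nilpotent, $SU = US$, and both $S$ and $U$ polynomials in $A$ with coefficients in $\F_q$, so in particular $S, U \in \Mat_n(\F_q)$. Because $S$ and $U$ commute and we are in characteristic $p$, the Frobenius-type identity $(S+U)^{p^k} = S^{p^k} + U^{p^k}$ holds for every $k \geq 0$, as the intermediate binomial coefficients vanish mod $p$. Taking $p^k = q^N$, we get $A^{q^N} = S^{q^N} + U^{q^N}$. Choosing $N$ with $q^N \geq n$ gives $U^{q^N} = 0$ since $U^n = 0$. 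Hence $A^{q^N} = S^{q^N}$, and it remains to show that $S^{q^N}$ is semisimple with the same eigenvalues as $A$.

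For this, we diagonalise $S$ over $\overline{\F}_q$, say $S = P \diag(\lambda_1,\dots,\lambda_n) P^{-1}$, where $\lambda_1,\dots,\lambda_n$ are the eigenvalues of $A$ with multiplicity; $A$ and $S$ share a characteristic polynomial because $U$ is nilpotent and commutes with $S$. Then $S^{q^N} = P \diag(\lambda_1^{q^N},\dots,\lambda_n^{q^N}) P^{-1}$ is diagonalisable, hence semisimple. The eigenvalues $\lambda_i$ are roots of $\chi_A \in \F_q[x]$, so they all lie in some finite field $\F_{q^d}$. If in addition $d \mid N$, then $\lambda_i^{q^N} = \lambda_i$. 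Thus any $N$ that is a multiple of $d$ and satisfies $q^N \geq n$ works; for instance $N = d\lceil \log_q n\rceil$, or $N=0$ when $n=1$. With such $N$, $A^{q^N}$ has exactly the eigenvalues $\lambda_1,\dots,\lambda_n$, with the same multiplicities.

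There is no real obstacle here. The only points needing care are that the additive decomposition is defined over $\F_q$ (perfectness), so that $S^{q^N}$ is honestly a matrix in $\Mat_n(\F_q)$, and that the two conditions on $N$, killing $U$ and fixing the eigenvalues under Frobenius, can be met at once.
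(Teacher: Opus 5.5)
Your proof is correct and follows the paper's argument: take the Jordan--Chevalley decomposition $A = S + U$ and choose $N$ to be a multiple of the degree of a field containing the eigenvalues, with $q^N \geq n$, so that raising to the $q^N$-th power kills the nilpotent part and fixes the semisimple part. The only difference is cosmetic: the paper observes directly that $S^{q^N} = S$, whereas you verify the semisimplicity and the eigenvalues of $S^{q^N}$ separately.
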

\begin{proof}
Let $\E$ be a finite extension of $\F_q$ containing all eigenvalues of $A$. Let $N$ be a multiple of $|\E : \F_q|$ such that $q^N \geq n$. Let $A = A_{\text{s}} + A_{\text{n}}$ be the Jordan-Chevalley decomposition of $A$, where $A_{\text{s}}$ and $A_{\text{n}}$ are the semisimple and nilpotent parts of $A$ that commute. We have $A_{\text{n}}^n = 0$ and $A_{\text{s}}^{\abs{\E}} = A_{\text{s}}$ since $A_{\text{s}}$ is diagonalizable over $\E$. Thus $A^{q^N} = A_{\text{s}}^{q^N} = A_{\text{s}}$.
\end{proof}

Replacing $A$ by the semisimple power $A^{q^N}$ does not take us outside the generated Lie algebra, because $\slfrak_n(\F_q)$ is $p$-restricted. We record the precise form we will use.

\begin{lemma} \label{dim_adNXY}
Let $A,B \in \slfrak_n(\F_q)$. Let $N\geq 0$ be such that $A^{q^N}$ is semisimple. Choose $P\in \GL_n(\overline{\F}_q)$ such that $X=P^{-1}A^{q^N}P$ is diagonal, and set $Y=P^{-1}BP$. Then the linear span of $\ad_X^\N Y$ is contained in $P^{-1} \langle A, B \rangle P$.
\end{lemma}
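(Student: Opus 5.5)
The key observation is that $A^{q^N}$ lies in the $\F_q$-linear span of iterated brackets of $A$, or more precisely that its adjoint $\ad_{A^{q^N}} = (\ad_A)^{q^N}$ as operators on $\Mat_n(\F_q)$. So I would avoid showing that $A^{q^N}$ itself lies in $\langle A,B\rangle$; that might fail, since $A^{q^N}$ need not lie in the generated subalgebra. Instead I would show that every element $\ad_X^k Y$ is conjugate to an element of the form $\ad_A^{m} B$, which lies in $\langle A,B\rangle$ by definition. Linear spans are then handled automatically, because $P^{-1}\langle A,B\rangle P$ is a linear subspace over $\F_q$.

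The first step is the identity $\ad_{C^{p}} = (\ad_C)^{p}$ for any $C \in \Mat_n$ in characteristic $p$. Write $\ad_C = L_C - R_C$, where $L_C$ and $R_C$ are left and right multiplication by $C$. These two operators commute, so the binomial theorem in characteristic $p$ gives $(L_C - R_C)^p = L_C^p - R_C^p = L_{C^p} - R_{C^p}$. Iterating, and using that $q^N$ is a power of $p$, yields $\ad_{A^{q^N}} = \ad_A^{q^N}$. Hence for every $k \ge 0$ we have
\[
\ad_{A^{q^N}}^k B = \ad_A^{kq^N} B \in \langle A, B\rangle .
\]
Note that this element is a genuine iterated bracket of $A$ and $B$, so it lies in $\langle A,B\rangle$ whether or not $A^{q^N}$ does.

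The second step is conjugation. Since $P^{-1}[U,V]P = [P^{-1}UP, P^{-1}VP]$, we get
\[
\ad_X^k Y = P^{-1}\bigl(\ad_{A^{q^N}}^k B\bigr)P \in P^{-1}\langle A,B\rangle P .
\]
Two small points remain. The set $P^{-1}\langle A,B\rangle P$ is only an $\F_q$-space, because $P$ has entries in $\overline{\F}_q$, so I would read the statement's linear span over $\F_q$. If a span over $\overline{\F}_q$ is intended instead, then $\langle A,B\rangle$ should be read as the $\overline{\F}_q$-span, and the same argument applies verbatim after tensoring. The case $k=0$ gives $Y$ itself, which clearly belongs to the conjugated subalgebra.

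There is no real obstacle here. The only point needing care is the $p$-th power identity for the adjoint, which rests on the commutation of $L_C$ and $R_C$, together with the bookkeeping of the base field in the span.
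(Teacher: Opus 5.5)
Your proof is correct and follows the paper's argument. The paper also gets $\ad_{A^{q^N}}^k B = \ad_A^{kq^N} B \in \langle A,B\rangle$ from the $p$-restricted structure of $\slfrak_n(\F_q)$ and then conjugates by $P$; your derivation of $\ad_{C^p} = (\ad_C)^p$ from the commuting operators $L_C$ and $R_C$ is just an explicit proof of that restricted identity, and your base-field remark is apt, since the Vandermonde step in \Cref{lie_algebra_contains_B_delta} uses $\overline{\F}_q$-coefficients and so the lemma is really applied with $\overline{\F}_q \otimes \langle A,B\rangle$.
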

\begin{proof}
Let $A' = A^{q^N}$. Since $\slfrak_n(\F_q)$ is a $p$-restricted Lie algebra, we have
\[
\ad_{A'} B = \ad_A^{q^N} B.
\]
Hence for all $k \geq 1$ the matrix $\ad_{A'}^k B$ belongs to $\langle A, B \rangle$. Conjugating with $P$, we obtain $P^{-1} (\ad_{A'}^k B) P =  \ad_X^k Y \in P^{-1} \langle A, B \rangle P$. The claim follows.
\end{proof}

\subsection{Isolating differences of eigenvalues}

Suppose now that $A \in \Mat_n(\overline{\F}_q)$ is diagonal with eigenvalues $\Lambda = ( \lambda_1, \lambda_2, \dots, \lambda_n )$ in some order. The adjoint of $A$ scales each elementary matrix as $\ad_A(E_{ij}) = (\lambda_i - \lambda_j) E_{ij}$, so it is natural to organise the entries of the second matrix $B$ by the eigenvalue differences they sit at. We record the relevant differences in the \emph{multiset}
\[
\Diff(\Lambda) = \{ \lambda_i - \lambda_j \mid 1 \leq i, j \leq n, \ \lambda_i \neq \lambda_j \},
\]
and for $\delta \in \Diff(\Lambda)$ we set
\[
  I_\delta = \{ (i,j) \mid \lambda_i - \lambda_j = \delta \},
  \qquad
  B_\delta = \sum_{(i,j) \in I_\delta} B_{ij} E_{ij} \in \Mat_n(\overline{\F}_q).
\]
Thus $B_\delta$ is the projection of $B$ onto the elementary matrices $E_{ij}$ whose position carries eigenvalue difference $\delta$. These projections are accessible from the adjoint action alone.

\begin{lemma} \label{lie_algebra_contains_B_delta}
Let $A \in \Mat_n(\overline{\F}_q)$ be a diagonal matrix with eigenvalues $\Lambda$ in order. For any $B \in \Mat_n(\overline{\F}_q)$, the linear span of 
\[
\ad_A^{\N}(B) = \left\{ \ad_A^k(B) \mid k \in \N \right\}
\]
contains $B_\delta$ for every $\delta \in \Diff(\Lambda)$.
\end{lemma}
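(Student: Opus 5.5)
Decompose $B$ according to the eigenvalue differences and then invert a Vandermonde system. Write $\Delta$ for the set of \emph{distinct} values in $\Diff(\Lambda)$, say $\Delta = \{\delta_1, \dots, \delta_r\}$, and put $\delta_0 = 0$. Since $A$ is diagonal, $\ad_A(E_{ij}) = (\lambda_i - \lambda_j)E_{ij}$. Grouping the entries of $B$ by the value of $\lambda_i - \lambda_j$ gives
\[
B = B_0 + \sum_{s=1}^r B_{\delta_s},
\]
where $B_0$ collects the positions with $\lambda_i = \lambda_j$. Applying the adjoint $k$ times yields, for every $k \geq 1$,
\[
\ad_A^k(B) = \sum_{s=1}^r \delta_s^k B_{\delta_s},
\]
because $\ad_A$ kills $B_0$.

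The key step is to regard the equations for $k = 1, \dots, r$ as a linear system expressing $\ad_A(B), \dots, \ad_A^r(B)$ in terms of the unknowns $B_{\delta_1}, \dots, B_{\delta_r}$. The coefficient matrix is $(\delta_s^k)_{1 \le k, s \le r}$. This factors as the Vandermonde matrix $(\delta_s^{k-1})$ times $\diag(\delta_1, \dots, \delta_r)$. Its determinant is
\[
\prod_s \delta_s \prod_{s<t} (\delta_t - \delta_s),
\]
which is nonzero: each $\delta_s \neq 0$ by the definition of $\Diff(\Lambda)$ (we only include pairs with $\lambda_i \neq \lambda_j$), and the $\delta_s$ are pairwise distinct by construction. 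The matrix is therefore invertible over $\overline{\F}_q$. Applying its inverse, each $B_{\delta_s}$ becomes an explicit $\overline{\F}_q$-linear combination of $\ad_A^1(B), \dots, \ad_A^r(B)$, so it lies in the span of $\ad_A^{\N}(B)$.

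The only point needing care is to work with the set of distinct values rather than the multiset, and to start at $k=1$ so that the unknown $B_0$ drops out. Taking $k = 0$ would bring in $B_0$ and the value $\delta_0 = 0$, which is harmless but unnecessary. No real obstacle is expected; the whole argument is the invertibility of this Vandermonde-type matrix.
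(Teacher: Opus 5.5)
Your proposal is correct and follows the paper's proof: expand $\ad_A^k(B)=\sum_s \delta_s^k B_{\delta_s}$ for $k\ge 1$ and invert the resulting system over the distinct nonzero differences. You also make explicit that the coefficient matrix is a Vandermonde matrix times $\diag(\delta_1,\dots,\delta_r)$, so its invertibility uses $\delta_s\neq 0$ as well as distinctness; the paper leaves that point implicit.
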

\begin{proof}
Let $A = \diag(\lambda_1, \lambda_2, \dots, \lambda_n)$. Then $\ad_A(E_{ij}) = (\lambda_i - \lambda_j) E_{ij}$ for any $i,j$. Thus, for all $k \geq 1$,
\[
  \ad_A^k B = \sum_{\delta} \delta^k B_\delta,
\]
where $\delta$ runs over the distinct nonzero elements of $\Diff(\Lambda)$. As $k$ varies from $1$ to $\abs{\Diff(\Lambda)}$, we obtain a linear system in the variables $B_\delta$ with coefficients forming an invertible Vandermonde matrix, since $\delta$ runs over distinct differences. The matrices $B_\delta$ can therefore be obtained as a linear combination of the $\ad_A^k(B)$ for $1 \leq k \leq \abs{\Diff(\Lambda)}$.
\end{proof}

\subsection{The extraction principle}

Combining the two ingredients yields the main principle that drives this paper. When an eigenvalue difference $\delta$ is attained by a \emph{single} pair of indices, the projection $B_\delta$ is a single elementary matrix, which can be expressed as a linear combination of $\ad_A^{\N}(B)$ as long as the corresponding entry of $B$ is nonzero.

\begin{proposition} \label{prop:extraction}
Let $A, B \in \slfrak_n(\F_q)$, let $N \geq 0$ be such that $A^{q^N}$ is semisimple, and choose $P \in \GL_n(\overline{\F}_q)$ so that $X = P^{-1} A^{q^N} P$ is diagonal with eigenvalues $\Lambda = (\lambda_1, \dots, \lambda_n)$. Write $Y = P^{-1} B P$. If $\delta = \lambda_i - \lambda_j$ has multiplicity $1$ in $\Diff(\Lambda)$ and $Y_{ij} \neq 0$, then $P^{-1} \langle A, B \rangle P$ contains the elementary matrix $E_{ij}$.
\end{proposition}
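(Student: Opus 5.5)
The plan is to chain \Cref{dim_adNXY} with \Cref{lie_algebra_contains_B_delta}, applied to the diagonal matrix $X$ and the conjugated matrix $Y$. First, by \Cref{dim_adNXY}, the linear span of $\ad_X^{\N} Y$ lies in $P^{-1}\langle A, B\rangle P$. This uses that $\slfrak_n(\F_q)$ is $p$-restricted, so that $\ad_{A^{q^N}} = \ad_A^{q^N}$.

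Note that $P^{-1}\langle A,B\rangle P$ should be read as an $\overline{\F}_q$-span, or equivalently we pass to $\langle A,B\rangle\otimes\overline{\F}_q$. This matters because the Vandermonde coefficients $\delta^k$ lie in $\overline{\F}_q$, not in $\F_q$. I would make this explicit, since it is the only subtle point: the conclusion is that $E_{ij}$ lies in the $\overline{\F}_q$-linear span of the conjugated algebra. This is the sense needed later, where one works with $\slfrak_m(\overline{\F}_q)$ blocks.

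Next I apply \Cref{lie_algebra_contains_B_delta} to $X = \diag(\lambda_1,\dots,\lambda_n)$ and $Y$. For every $\delta\in\Diff(\Lambda)$, the projection $Y_\delta = \sum_{(k,l)\in I_\delta} Y_{kl}E_{kl}$ is an $\overline{\F}_q$-linear combination of the matrices $\ad_X^k Y$ with $k\ge 1$. It therefore lies in the span from the first step. One small check is needed here. The proof of that lemma uses $k\ge 1$, which kills the $\delta=0$ component. Since $\delta=\lambda_i-\lambda_j$ has multiplicity $1$ in $\Diff(\Lambda)$, it is in particular nonzero, so $\lambda_i\ne\lambda_j$ and the lemma applies to this $\delta$.

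Finally, multiplicity one means $I_\delta=\{(i,j)\}$, so $Y_\delta = Y_{ij}E_{ij}$. Because $Y_{ij}\neq 0$, dividing by this scalar gives $E_{ij}$ in the span. No step is a real obstacle; the only care needed is the scalar-field bookkeeping described above.
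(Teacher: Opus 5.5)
Your proposal is correct and matches the paper's proof: you combine \Cref{dim_adNXY} with \Cref{lie_algebra_contains_B_delta}, use multiplicity one to get $I_\delta = \{(i,j)\}$ and hence $Y_\delta = Y_{ij}E_{ij}$, and then divide by $Y_{ij}$. Your point that the containment must be read in the $\overline{\F}_q$-span, because the Vandermonde coefficients lie in $\overline{\F}_q$, is a fair clarification that the paper leaves implicit, and it agrees with how the result is used later through $\overline{\F}_q \otimes \langle A, B\rangle$.
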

\begin{proof}
By \Cref{dim_adNXY} the span of $\ad_X^\N Y$ is contained in $P^{-1} \langle A, B \rangle P$, and by \Cref{lie_algebra_contains_B_delta} it contains $Y_\delta = \sum_{(k,l) \in I_\delta} Y_{kl} E_{kl}$. As $\delta$ has multiplicity $1$ in $\Diff(\Lambda)$, the index set $I_\delta = \{ (i,j) \}$ consists of a single pair, so $Y_\delta = Y_{ij} E_{ij}$. Since $Y_{ij} \neq 0$, dividing by it gives $E_{ij} \in P^{-1} \langle A, B \rangle P$.
\end{proof}

The two hypotheses of the preceding proposition are of an entirely different nature, and we control them separately. That $\delta$ be a unique difference is a condition on the eigenvalues of $A$ alone, while that $Y_{ij} \neq 0$ is a condition on the matrix $B$ relative to the eigenbasis of $A$. 

We first turn to the eigenvalues. The challenge is that over finite fields of bounded size, it is \emph{not} true that a uniformly random matrix in $\Mat_n(\F_q)$ is regular semisimple, i.e. has no repeated roots over $\overline{\F}_q$. In fact, in the large $n$ limit, the proportion of regular semisimple matrices in $\Mat_n(\F_q)$ is $\prod_{r = 1}^\infty (1-q^{-r}) \leq 1-q^{-1}$ (see \cite{FMP05}), which is bounded away from $1$ when $q$ is bounded. We must therefore work harder to produce eigenvalue differences with multiplicity $1$. Our strategy is to show that the characteristic polynomial of a random matrix has a \emph{largish} irreducible factor all of whose root differences are unique, and we can use the extraction principle with this factor.

\section{Differences of roots}
\label{sec:differences-of-roots}

In this section we study the differences of roots of an irreducible polynomial over $\F_q$ and show that a random irreducible polynomial of large degree rarely has a repeated difference. The estimates rest on the theory of linearized polynomials, which lets us bound how often Frobenius powers of a random field element satisfy an additive relation. Throughout this section we assume that $q$ is odd.

\subsection{Linearized polynomials}

Let $\E$ be a finite extension of $\F_q$. Let $\Fr$ be the Frobenius automorphism $x \mapsto x^q$ of $\E$. This is a linear map over $\F_q$. For any polynomial $L$ with coefficients in $\F_q$, we can thus consider $L(\Fr)$ as an endomorphism of $\E$. Such endomorphisms are called \emph{linearized polynomials}. These polynomials are well-studied in finite field theory. In particular, they have the following property.

\begin{theorem}[{\cite[determining roots of linearized polynomials, page 110]{LN}}]
Let $L$ be a polynomial over $\F_q$. Then $\dim \ker L(\Fr) \leq \deg L$.
\end{theorem}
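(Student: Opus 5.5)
The plan is to prove the bound by relating $\ker L(\Fr)$ to the roots of a single polynomial, the $q$-associate of $L$. Write $L(t) = \sum_{i=0}^{d} a_i t^i$ with $a_i \in \F_q$ and $a_d \neq 0$, so $d = \deg L$. (If $L = 0$ the statement is vacuous or must be read with the convention $\deg 0 = -\infty$; I will assume $L \neq 0$.) Then $L(\Fr)$ sends $x \in \E$ to
\[
\sum_{i=0}^{d} a_i x^{q^i}.
\]
I introduce the linearized polynomial $\ell(x) = \sum_{i=0}^d a_i x^{q^i} \in \F_q[x]$. This is a nonzero polynomial of degree exactly $q^d$, because its leading term is $a_d x^{q^d}$ and no other exponent equals $q^d$.

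The kernel of $L(\Fr)$ is exactly the set of roots of $\ell$ lying in $\E$. A nonzero polynomial of degree $q^d$ has at most $q^d$ roots in the field $\E$, so
\[
\abs{\ker L(\Fr)} \le q^d .
\]
Since $L(\Fr)$ is $\F_q$-linear (Frobenius is $\F_q$-linear and the coefficients lie in $\F_q$), its kernel is an $\F_q$-subspace of $\E$. Its cardinality is therefore $q^{\dim \ker L(\Fr)}$, and comparing gives $q^{\dim \ker L(\Fr)} \le q^d$, that is, $\dim \ker L(\Fr) \le \deg L$.

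There is essentially no obstacle. The only point to check is that the degree of $\ell$ is $q^{\deg L}$ with nonzero leading coefficient, which holds because the exponents $q^i$ are distinct. One could also argue via the isomorphism of $\F_q[\Fr]$-modules $\E \cong \F_q[t]/(t^{m}-1)$ from the normal basis theorem, where $m = \abs{\E : \F_q}$. That would show the kernel has dimension $\deg \gcd(L, t^m - 1)$, but the root-counting argument above is shorter, so I will use it.
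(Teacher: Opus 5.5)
Your proof is correct. The paper gives no argument of its own and only cites Lidl--Niederreiter, and your argument is the standard one behind that citation: the $q$-associate $\sum_i a_i x^{q^i}$ of $L$ has degree exactly $q^{\deg L}$, so it has at most $q^{\deg L}$ roots in $\E$, and these roots form the $\F_q$-subspace $\ker L(\Fr)$. Your restriction to $L \neq 0$ also matches how the paper applies the result, namely to nonzero polynomials in \Cref{prob_frob_poly_sends_alpha_to_some_set}.
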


We will use this for a set of linearized polynomials in the following probabilistic manner.

\begin{lemma} \label{prob_frob_poly_sends_alpha_to_some_set}
Let $\mathcal L$ be a finite set of nonzero polynomials over $\F_q$ and let $S \subseteq \E$.
For a uniformly random $\alpha \in \E$, we have
  \[
    \P(L(\Fr)(\alpha) \in S \text{ for some $L \in \mathcal L$}) \leq \abs{\mathcal L} \abs{S} q^{\max \deg \mathcal L} / \abs{\E}.
  \]
  \end{lemma}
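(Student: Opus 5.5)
The plan is a union bound over the pairs $(L,s)$ with $L \in \mathcal L$ and $s \in S$, together with a fibre-counting argument for each pair. For fixed $L$ and $s$ we bound the number of $\alpha \in \E$ with $L(\Fr)(\alpha) = s$. Since $L(\Fr)$ is $\F_q$-linear on $\E$, this preimage is either empty or a coset $\alpha_0 + \ker L(\Fr)$ of the kernel. In either case it has at most $\abs{\ker L(\Fr)}$ elements.

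By the quoted theorem from \cite{LN}, the kernel has $\F_q$-dimension at most $\deg L$. Hence $\abs{\ker L(\Fr)} \leq q^{\deg L} \leq q^{\max \deg \mathcal L}$. We use here that $L$ is nonzero, since the zero polynomial would have all of $\E$ as its kernel. So the fibre over $s$ has probability at most $q^{\max\deg\mathcal L}/\abs{\E}$ for a uniformly random $\alpha$.

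Summing over the $\abs{S}$ choices of $s$ and the $\abs{\mathcal L}$ choices of $L$ then gives
\[
\P\bigl(\exists L \in \mathcal L : L(\Fr)(\alpha)\in S\bigr) \leq \sum_{L\in\mathcal L}\sum_{s\in S} \frac{\abs{\ker L(\Fr)}}{\abs{\E}} \leq \frac{\abs{\mathcal L}\abs{S} q^{\max\deg\mathcal L}}{\abs{\E}}.
\]

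There is no real obstacle in this argument. The only points needing care are that $\Fr$ is $\F_q$-linear, so that $L(\Fr)$ is linear, and that the cited kernel bound applies to every nonzero $L$. Both are granted by the setup and the assumption that every $L \in \mathcal L$ is nonzero.
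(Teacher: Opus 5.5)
Your proposal is correct and follows the paper's proof: you bound each fibre of the $\F_q$-linear map $L(\Fr)$ by the kernel size $q^{\deg L}$ from the cited theorem, then take a union bound over $L \in \mathcal L$ and $s \in S$. You also make explicit the coset structure of the fibres and why $L \neq 0$ is needed, both of which the paper leaves implicit.
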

  \begin{proof}
  For each $L \in \mathcal L$, the map $L(\Fr)$ is linear over $\F_q$ and has kernel of size at most $q^{\deg L}$ by the previous theorem. Hence all fibers of $L(\Fr)$ are of size at most $q^{\max \deg \mathcal L}$.   The proof follows by a union bound over all elements of $\mathcal L$ and $S$.
  \end{proof}

\subsection{Irreducible polynomials with repeated differences of roots}

Let $f$ be a monic irreducible polynomial of degree $n$ over $\F_q$. Let $\Roots(f)$ be the set of its roots in its splitting field $\E = \F_{q^n}$ and consider the \emph{multiset} of its nontrivial differences
\[
\Diff(f) = \{ \alpha - \beta \mid \alpha, \beta \in \Roots(f), \ \alpha \neq \beta \}.
\]
We say \emph{a difference is repeated} if $\Diff(f)$ has an element of multiplicity larger than $1$, {\it i.e.} $\max \mult \Diff(f) > 1$. Otherwise, we say that \emph{all differences are unique}.

\begin{example}
There are polynomials for which \emph{every} difference is repeated. For example, consider the Artin-Schreier polynomial $f(x) = x^p - x - 1$ over $\F_p$. For any root $\alpha$ of $f$, we have $\Fr^i(\alpha) = \alpha + i$ for every $i \in \F_p$, and so 
the numbers $\alpha + i$ form all the roots of $f$. Hence the differences of $f$ are just the differences $i - j$ of the elements of $\F_p$, and these are all repeated, for example $i - j = (i+1) - (j+1)$.
\end{example}

\begin{proposition} \label{prop:number_of_polynomials_with_repeated_difference}
The number of monic irreducible polynomials of degree $n$ over $\F_q$ with a repeated difference of roots is at most $n^3 q^{3n/4}$.
\end{proposition}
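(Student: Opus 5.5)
We count generators $\alpha \in \E = \F_{q^n}$ of irreducible polynomials of degree $n$ for which $\Diff(f)$ has a repeated element. Each such $f$ has exactly $n$ roots, so it suffices to bound by $n^4 q^{3n/4}$ the number of $\alpha \in \E$ such that the conjugates $\Fr^i(\alpha)$, $0 \leq i < n$, are distinct and satisfy a relation
\[
\Fr^a(\alpha) - \Fr^b(\alpha) = \Fr^c(\alpha) - \Fr^d(\alpha)
\]
with $(a,b) \neq (c,d)$, $a \neq b$, $c \neq d$ (indices mod $n$). Since the conjugates are distinct, a genuine repetition means $\{a,d\} \neq \{b,c\}$ as ordered data. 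Concretely, $(a,b)\neq(c,d)$ and also $a \neq b$; the case $a = c$ forces $b = d$, and $b = d$ forces $a = c$. Applying a power of $\Fr$ we may normalise one index to $0$. Each relation then says $L(\Fr)(\alpha) = 0$ for the polynomial $L(x) = x^a - x^b - x^c + x^d$ over $\F_q$.

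The first step is to check that $L \neq 0$ as an endomorphism of $\E$. As a polynomial of degree $< n$ it is nonzero unless the monomials cancel. Since $a \neq b$, $a \neq c$ and $c\neq d$, the only possible cancellation is $a = d$ and $b = c$. This gives $2\Fr^a\alpha = 2\Fr^b\alpha$, which is impossible for distinct conjugates since $q$ is odd. In that degenerate case $L = 2(x^a - x^b)$ is still nonzero. Note that $x^0,\dots,x^{n-1}$ act independently on $\E$ by Artin's lemma on independence of characters, so $L(\Fr) \neq 0$. This is why we work mod $x^n-1$ with degrees $<n$.

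The key point is that $\deg L$ can be as large as $n-1$, so applying \Cref{prob_frob_poly_sends_alpha_to_some_set} with $S=\{0\}$ directly is useless. We would instead bound $\dim\ker L(\Fr)$ by the degree of $\gcd(L, x^n - 1)$. Indeed $\Fr$ satisfies $x^n-1$ on $\E$, and $\E$ is a cyclic $\F_q[x]$-module isomorphic to $\F_q[x]/(x^n-1)$ by the normal basis theorem, so $\ker L(\Fr) \cong \F_q[x]/\gcd(L,x^n-1)$. Thus we need, for a four-term polynomial $L = x^a - x^b - x^c + x^d$, that $\deg\gcd(L, x^n-1) \leq 3n/4$ roughly.

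To get this, shift so the exponents are small relative to $n$. Among the four exponents on the cycle $\Z/n$ there is a gap of length at least $n/4$. Multiplying by a power of $x$, which is a unit mod $x^n-1$, we may assume all exponents lie in $[0, 3n/4]$. Then $L$ is a nonzero polynomial of degree $\leq 3n/4$, and the gcd has degree at most $3n/4$. Hence $\abs{\ker L(\Fr)} \leq q^{3n/4}$. This is exactly \Cref{prob_frob_poly_sends_alpha_to_some_set} applied after the unit shift, that is, with $\deg L \leq 3n/4$.

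Finally we take a union bound over the choices of $(a,b,c,d)$ after normalising $a=0$, which gives at most $n^3$ relations. Each relation has at most $q^{3n/4}$ solutions $\alpha$, so the number of bad $\alpha$ is at most $n^3 q^{3n/4}$. Dividing by $n$ roots per polynomial gives an even better bound than claimed, which suffices.

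The main obstacle is the nonvanishing and degree control of $L$. Carefully handling the cyclic shift and the $a=d$, $b=c$ case (which uses $q$ odd) is the step to verify.
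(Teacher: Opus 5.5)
Your proof is correct and follows essentially the paper's argument: rotate the relation $\Fr^a(\alpha)-\Fr^b(\alpha)=\Fr^c(\alpha)-\Fr^d(\alpha)$ by a Frobenius power so that the exponents of $L(x)=x^a-x^b-x^c+x^d$ lie in an interval of length $3n/4$, bound $\abs{\ker L(\Fr)}\leq q^{3n/4}$, and take a union bound over $n^3$ index triples. You add some extra care that the paper leaves implicit: you check $L\neq 0$, and you separate the normalisation $a=0$ used for counting from the gap rotation used for the degree bound, via $\ker(\Fr^s L(\Fr))=\ker L(\Fr)$; dividing by the $n$ roots of each $f$ then even gives the slightly stronger bound $n^2 q^{3n/4}$.
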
 
\begin{proof}
Let $f$ have a root $\alpha$ and a repeated difference $\delta$. Then we have
\[
  \delta = \Fr^i(\alpha) - \Fr^j(\alpha) = \Fr^k(\alpha) - \Fr^l(\alpha)
\]
for some $0 \leq i,j,k,l < n$ with $(i,j) \neq (k,l)$. After applying a suitable power of $\Fr$, we can assume that $l=0$ and $\max \{i, j, k \} \leq 3n/4$.\footnote{After translating all four indices cyclically, we may place one of them at $0$. Among the four points on the cyclic group $\Z/n\Z$, there is a gap of length at least $n/4$. Rotate so that this gap lies at the end of the interval. Then all remaining indices lie in an interval of length at most $3n/4$.} Letting $L(x) = x^i - x^j - x^k + 1$, we thus have $\deg L \leq 3n/4$ and $\alpha \in \ker L(\Fr)$. It now follows from \Cref{prob_frob_poly_sends_alpha_to_some_set} that the number of roots $\alpha$ of $f$ is at most $q^{3n/4}$. Any $\alpha$ uniquely determines $f$. The number of possibilities for $i,j,k$ is at most $n^3$. In total there are thus at most $n^3 q^{3n/4}$ such polynomials $f$.
\end{proof}

We will require the following more general version of the above proposition.

\begin{proposition} \label{prob_some_diff_has_higher_multiplicity}
Let $h$ be a non-constant polynomial over $\F_q$. Let $f$ be a uniformly random irreducible polynomial over $\F_q$ of degree $n \geq 4$. Then
\[
  \P(\text{some $\delta \in \Diff(f)$ satisfies } \mult \delta > 1 \text{ in $\Diff(fh)$}) \leq 6 (\deg h)^2 n^3 q^{-n/4} .
\]
\end{proposition}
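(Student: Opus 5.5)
We reduce the statement to \Cref{prob_frob_poly_sends_alpha_to_some_set}, applied to a uniformly random root of $f$, and treat separately the possible sources of the second occurrence of the difference.

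\textbf{Reduction to a random field element.} Let $\E = \F_{q^n}$ and let $\alpha$ be uniform among the elements of $\E$ of degree exactly $n$ over $\F_q$. Each irreducible $f$ of degree $n$ has exactly $n$ such roots, so $f = $ the minimal polynomial of $\alpha$ is uniformly distributed. There are at least $q^n - \sum_{d \mid n, d<n} q^d \geq q^n/2$ such elements for $n \geq 4$. Hence any event has probability at most twice its probability for $\alpha$ uniform in all of $\E$. We will therefore bound probabilities for uniform $\alpha \in \E$ and multiply by $2$ at the end.

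\textbf{Setting up the relation.} Suppose some $\delta = \Fr^i(\alpha) - \Fr^j(\alpha)$ with $i \neq j$ has multiplicity $>1$ in $\Diff(fh)$, where roots are counted with multiplicity. Then $\delta = \gamma - \varepsilon$ for a second pair $(\gamma,\varepsilon)$ of roots of $fh$ that is distinct as an indexed pair. We split into three cases according to whether $\gamma$ and $\varepsilon$ come from the factor $f$ or from $h$. Since $h$ has coefficients in $\F_q$, its root set $R$ is $\Fr$-stable. So in every case we may apply a power of $\Fr$ and rotate the indices, exactly as in the footnote to \Cref{prop:number_of_polynomials_with_repeated_difference}, while keeping the $h$-roots inside $R$.

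\emph{(a) Both $\gamma,\varepsilon$ are roots of $f$.} Write $\gamma = \Fr^k(\alpha)$, $\varepsilon = \Fr^l(\alpha)$. This is precisely the situation of \Cref{prop:number_of_polynomials_with_repeated_difference}. We get $L(\Fr)(\alpha)=0$ with $L = x^i - x^j - x^k + 1$ nonzero and $\deg L \leq 3n/4$, and there are at most $n^3$ choices of $L$. This gives probability at most $n^3 q^{-n/4}$.

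\emph{(b) Exactly one of $\gamma,\varepsilon$ is a root of $f$.} Say $\gamma = \Fr^k(\alpha)$ and $\varepsilon \in R$; the other case is symmetric. Then $L(\Fr)(\alpha) = \pm\varepsilon$ with $L = x^i - x^j - x^k$ or its negative. After rotating three indices we get $\deg L \leq 2n/3$.
\begin{itemize}
\item $L \neq 0$: if $k=i$ then $L = -x^j$; if $k=j$ then $L = x^i - 2x^j$, which is nonzero because $q$ is odd.
\item Counting: take $S = R \cup (-R)$, so $\abs{S} \leq 2\deg h$, and there are at most $2n^3$ polynomials $L$.
\end{itemize}
The lemma gives probability at most $4\deg h \, n^3 q^{-n/3}$.

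\emph{(c) Both $\gamma,\varepsilon$ are roots of $h$.} Then $L(\Fr)(\alpha) \in S := \{\gamma-\varepsilon : \gamma,\varepsilon \in R\}$ with $L = x^i - x^j \neq 0$. Here $\abs{S} \leq (\deg h)^2$, there are at most $n^2$ choices, and rotation gives $\deg L \leq n/2$. This yields probability at most $(\deg h)^2 n^2 q^{-n/2}$. This case also absorbs the degenerate situation $f \mid h$, where the same index pair recurs through a repeated root, because then $\alpha - \beta$ itself lies in $S$.

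\textbf{Conclusion and main obstacle.} Summing the three cases and multiplying by $2$ gives a total of at most
\[
2\bigl(1 + 4\deg h + (\deg h)^2\bigr) n^3 q^{-n/4} \leq 6(\deg h)^2 n^3 q^{-n/4}.
\]
The constant as written needs slight care: either sharpen the factor $2$ from the reduction step, using that the density of degree-$n$ elements is close to $1$, or tighten the case (b) count by using only $R$ with a sign absorbed into $L$. The conceptual content is already in the previous proposition. The main point to check is that in every case the linearized polynomial is nonzero, which is exactly where oddness of $q$ enters (the case $k=j$). One must also make sure the rotation of indices is compatible with the $h$-roots, which holds because $R$ is $\Fr$-stable.
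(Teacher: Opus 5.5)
Your proposal is correct and is essentially the paper's proof. You pass to a uniform $\alpha\in\E$ at the cost of a factor $2$, split into the three cases (both roots of the second pair from $f$, one from each factor, both from $h$), rotate indices using the $\Fr$-stability of $\Roots(h)$ to get degree bounds $3n/4$, $2n/3$, $n/2$, and conclude with \Cref{prob_frob_poly_sends_alpha_to_some_set} and a union bound.

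On the constant, your displayed final inequality $2(1+4d+d^2)\le 6d^2$ fails for $d=\deg h\le 2$. The clean repair is the paper's (your second suggested fix, done right): the reversed orientation in case (b) is the same relation $\Fr^i(\alpha)-\Fr^j(\alpha)=\Fr^k(\alpha)-a$ with $i,j$ swapped. So $n^3$ polynomials against $S=\Roots(h)$ suffice, and then $1+d+d^2\le 3d^2$ gives exactly $6d^2$.
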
 
\begin{proof}
An irreducible polynomial $f$ of degree $n$ corresponds to a generator of the field $\E = \F_{q^n}$. Therefore, the event under consideration is equivalent to the following: for a uniformly random generator $\alpha\in \E$, one of the three conditions holds:
\begin{itemize}
  \item $\Fr^i(\alpha) - \Fr^j(\alpha) = \Fr^k(\alpha) - \alpha$ for some $0 \leq i,j,k \leq 3n/4$;
  \item $\Fr^i(\alpha) - \Fr^j(\alpha) = \Fr^k(\alpha) - a$ for some $0 \leq i,j,k \leq 2n/3$ and $a\in \Roots(h)$;
  \item $\Fr^i(\alpha) - \alpha = a - b$ for some $0 \leq i\leq n/2$ and $a,b\in \Roots(h)$.
\end{itemize}
By \Cref{prob_frob_poly_sends_alpha_to_some_set}, if $\alpha \in \E$ is uniformly random, then by applying the union bound the probability of any of these events occurring is at most
\[ 
n^3 q^{-n/4} 
+ 
n^3 q^{-n/3} \abs{\Roots(h)} 
+ 
n q^{-n/2} \abs{\Roots(h)}^2
\leq 
3 n^3 q^{-n/4} \abs{\Roots(h)}^2.
\]
Note that the probability a uniformly random $\alpha \in \E$ fails to be a generator is bounded by
\[
\sum_{\substack{d \mid n \\ d \neq n}} 
\P (\alpha \in \F_{q^d} ) 
\leq 
\sum_{d=1}^{\lfloor n/2 \rfloor}
q^{d-n} \leq 2 q^{-n/2}.
\]
For $n \geq 4$, this probability is at most $1/2$. Hence, conditioning on $\alpha$ being a generator of $\E$, we obtain an upper bound for the probability of the three conditions as
\[
  3 n^3 q^{-n/4} \abs{\Roots(h)}^2 / (1 - 2 q^{-n/2})
  \leq 6 n^3 q^{-n/4} \abs{\Roots(h)}^2. \qedhere
\]
\end{proof}

\section{Characteristic polynomials of random matrices}
\label{sec:char-poly-of-random-matrices}

In this section, we study the factorisation of the characteristic polynomial $\chi_A$ of a random matrix $A$ in $\slfrak_n(\F_q)$. We prove that with high probability, every such polynomial has an irreducible factor of degree $\Omega(\log n)$, of multiplicity $1$, and with all differences of its roots unique. Throughout this section we assume that $q$ is odd.

\subsection{Counting characteristic polynomials}

We will count matrices with the desired properties by translating the problem, up to a negligible loss, to a problem of counting polynomials alone. The crucial input we need for this is the following result of Reiner giving an explicit formula for the number of matrices with a given characteristic polynomial.

\begin{theorem}[{\cite[Theorem 2]{reiner1961number}}] \label{thm:reiner_count}
Let $g \in \F_q[x]$ be a polynomial of degree $n$ with factorisation $g = f_1^{n_1} f_2^{n_2} \cdots f_k^{n_k}$ into powers of distinct irreducibles. Then the number of matrices in $\Mat_n(\F_q)$ with characteristic polynomial $g$ is
\[
q^{n^2 - n} \frac{F(q,n)}{\prod_{i=1}^k F(q^{\deg f_i}, n_i)},
\qquad
F(u,m) = \prod_{j=1}^m (1 - u^{-j}).
\]
\end{theorem}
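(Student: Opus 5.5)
Reiner's theorem is cited from the literature, so the plan is to reconstruct the standard counting argument, not a new one. The number of matrices with characteristic polynomial $g$ is a sum over similarity classes. A class with characteristic polynomial $g$ corresponds, via rational canonical form and the primary decomposition, to a tuple of partitions $(\mu_1,\dots,\mu_k)$ with $\abs{\mu_i} = n_i$. Here $\mu_i$ records the Jordan type of the $f_i$-primary component. The size of such a class is $\abs{\GL_n(\F_q)}/\abs{C(M)}$, where $C(M)$ is the centralizer. The centralizer factors as the product over $i$ of the centralizers of the $f_i$-primary parts. The $f_i$-primary part is a module over $\F_q[x]/(f_i^{n_i})$. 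Its centralizer has the same size as the centralizer of a nilpotent matrix of type $\mu_i$ over the residue field $\F_{q^{\deg f_i}}$, with $q$ replaced by $u_i = q^{\deg f_i}$.

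The count therefore factorises:
\[
\#\{M : \chi_M = g\} = \abs{\GL_n(\F_q)} \prod_{i=1}^k \Big( \sum_{\abs{\mu} = n_i} \frac{1}{\abs{C_{u_i}(\mu)}} \Big).
\]
Each inner sum depends only on $u_i$ and $n_i$. It equals the number of nilpotent matrices in $\Mat_{n_i}(\F_{u_i})$ divided by $\abs{\GL_{n_i}(\F_{u_i})}$, because every nilpotent class of type $\mu$ has size $\abs{\GL}/\abs{C(\mu)}$.

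Next I would insert the Fine--Herstein theorem: the number of nilpotent $m\times m$ matrices over $\F_u$ is $u^{m^2-m}$. This is the main input, and I would quote it or prove it by the standard recursion on the kernel flag. Together with $\abs{\GL_m(\F_u)} = u^{m^2} F(u,m)$, it gives each inner sum as $u_i^{-n_i}/F(u_i,n_i)$. Since $\prod_i u_i^{n_i} = q^{\sum_i n_i \deg f_i} = q^n$, multiplying by $\abs{\GL_n(\F_q)} = q^{n^2}F(q,n)$ yields
\[
q^{n^2-n} F(q,n) \big/ \prod_i F(q^{\deg f_i}, n_i),
\]
as claimed.

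The main obstacle is the reduction of the primary-component centralizer to the nilpotent case over the residue field. For the counting it suffices that the $f_i$-primary modules have the same classification and automorphism-group sizes as $\F_{u_i}[t]$-modules killed by $t^{n_i}$. This holds because $\F_q[x]/(f^m) \cong \F_{u}[t]/(t^m)$: finite fields are perfect, so the complete local ring splits by Hensel/Cohen structure. I would record this ring isomorphism explicitly. Once it is in place, the rest is bookkeeping.
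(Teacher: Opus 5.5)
The paper gives no proof of this statement. It imports the formula from Reiner and uses it only through the crude bound of \Cref{lem:prob_char_poly_in_P}, so there is no internal argument to compare yours against.

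Your reconstruction is correct and complete modulo the Fine--Herstein count. The following steps all check out:
\begin{itemize}
\item the parametrisation of similarity classes by tuples $(\mu_1,\dots,\mu_k)$ with $\abs{\mu_i}=n_i$;
\item the splitting of the centraliser over primary components;
\item the rewriting of each inner sum as the number of nilpotent matrices divided by $\abs{\GL_{n_i}(\F_{u_i})}$;
\item the final bookkeeping $\prod_i u_i^{n_i}=q^n$.
\end{itemize}
The ring isomorphism $\F_q[x]/(f^m)\cong\F_u[t]/(t^m)$ is also fine. The fixed points of $z\mapsto z^u$ form a subfield $K$ mapping isomorphically onto the residue field, by Hensel applied to $X^u-X$. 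The map $K[t]\to\F_q[x]/(f^m)$, $t\mapsto f$, is then surjective with kernel $(t^m)$ by counting. It matches the ideals $(f^j)$ with $(t^j)$, so module types and automorphism groups correspond as you need.

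There is a shorter route that bypasses both the sum over partitions and the structure of $\F_q[x]/(f^m)$: the Jordan--Chevalley decomposition, which the paper already uses in \Cref{power_diagonalizable}. Since $\F_q$ is perfect, $A\mapsto(A_s,A_n)$ is a bijection between two sets:
\begin{itemize}
\item matrices $A$ with $\chi_A=g$;
\item pairs $(S,N)$ with $S$ semisimple, $\chi_S=g$, and $N$ nilpotent commuting with $S$.
\end{itemize}
The semisimple $S$ with $\chi_S=g$ form a single class with commutant $\prod_i\Mat_{n_i}(\F_{q^{d_i}})$, where $d_i=\deg f_i$. Fine--Herstein applied in each factor then gives
\[
\abs{\GL_n(\F_q)}\prod_{i=1}^k\frac{q^{d_i(n_i^2-n_i)}}{\abs{\GL_{n_i}(\F_{q^{d_i}})}}
=q^{n^2-n}\frac{F(q,n)}{\prod_{i=1}^k F(q^{d_i},n_i)}.
\]
Both arguments rest on the same input: the coefficient field you construct is $\F_q[A_s]$ restricted to the primary component. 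Your version has the advantage of recording the contribution of each similarity class separately. The Jordan--Chevalley version reaches the total in one step.
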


We can use this formula for counting matrices in $\slfrak_n(\F_q)$ as well, since the trace zero condition depends only on the subleading coefficient of the characteristic polynomial (the trace of a polynomial is the sum of its roots). A further input that we will need is the following elementary inequality. Let $\Irr_d$ be the set of monic irreducible polynomials of degree $d$ over $\F_q$. 
Then $\abs{\Irr_d} \leq q^d/d$ (see \cite[Theorem 2.2]{rosen2013number}). We now combine these inputs to record the crude consequence that we shall need.

\begin{lemma}\label{lem:prob_char_poly_in_P}
There is an absolute constant $C>0$ such that the following holds. Let $\mathcal P$ be
any set of monic degree $n$ polynomials over $\F_q$ with trace zero. If
$A\in\slfrak_n(\F_q)$ is uniformly random, then
\[
\P(\chi_A\in\mathcal P)
\leq
C n^4 \frac{|\mathcal P|}{q^{n-1}}.
\]
\end{lemma}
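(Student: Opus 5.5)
We bound the probability by summing Reiner's count over the polynomials in $\mathcal P$ and dividing by the size of the Lie algebra. Since trace zero depends only on the subleading coefficient of $\chi_A$, every matrix in $\Mat_n(\F_q)$ with characteristic polynomial $g \in \mathcal P$ lies in $\slfrak_n(\F_q)$. Hence, by \Cref{thm:reiner_count},
\[
\P(\chi_A \in \mathcal P) = \frac{1}{q^{n^2-1}} \sum_{g \in \mathcal P} q^{n^2-n} \frac{F(q,n)}{\prod_i F(q^{\deg f_i}, n_i)} \leq \frac{\abs{\mathcal P}}{q^{n-1}} \max_{g} \frac{F(q,n)}{\prod_i F(q^{\deg f_i}, n_i)}.
\]
It therefore suffices to show that the ratio $F(q,n)/\prod_i F(q^{d_i}, n_i)$ is at most $C n^4$ uniformly over all factorisation patterns $g = \prod f_i^{n_i}$ with $\sum d_i n_i = n$, where $d_i = \deg f_i$.

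Since $F(q,n) \leq 1$, we only need a lower bound on $\prod_i F(q^{d_i}, n_i)$. For a single factor, $F(u,m) \geq \prod_{j\ge1}(1-u^{-j}) \geq 1 - u^{-1} - u^{-2} + \dots$, and by Euler's pentagonal theorem, or more crudely $\log F(u,\infty) \geq -\sum_j u^{-j}/(1-u^{-j}) \geq -2/(u-1)$ for $u \geq 3$, we get $F(u,m) \geq \exp(-c/u)$ with an absolute $c$. Since $q$ is odd, $u = q^{d_i} \geq 3$, so each factor is at least an absolute constant $c_0 > 0$ (we have $F(3,\infty) \approx 0.56$). Consequently
\[
\prod_i F(q^{d_i}, n_i) \geq \exp\Bigl(-c \sum_i q^{-d_i}\Bigr).
\]

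The main obstacle is controlling $\sum_i q^{-d_i}$, since the number $k$ of distinct irreducible factors may be large. Group the factors by degree: the distinct $f_i$ of degree $d$ number at most $\min(\abs{\Irr_d}, n/d) \leq q^d/d$, so degree $d$ contributes at most $\min(1/d,\, n q^{-d}/d) \leq 1/d$. We also have $d \leq n$, so $\sum_i q^{-d_i} \leq \sum_{d=1}^n 1/d \leq 1 + \log n$. This yields $\prod_i F \geq e^{-c} n^{-c}$. With $c = 2/(q-1) \cdot$(constant) and $q \geq 3$, the exponent $c$ is at most about $1$ to $2$, which is well within $n^4$. To be safe I would use the sharper per-factor bound $-\log F(u,\infty) \leq \sum_j \frac{u^{-j}}{1-u^{-j}} \leq \frac{3}{2}\cdot\frac{u^{-1}}{1-u^{-1}} \leq \frac{9}{4}u^{-1}$ for $u\ge 3$, giving $\prod_i F \geq e^{-9/4}\, n^{-9/4}$. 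Hence the ratio is at most $C n^{4}$ with $C = e^{9/4}$ (in fact $n^{3}$ suffices), which proves the lemma. The only care needed is in the harmonic-sum step: distinct factors of the same degree must be counted via $\abs{\Irr_d} \leq q^d/d$, not bounded individually.
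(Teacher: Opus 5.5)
Your proposal is correct and follows essentially the same route as the paper: Reiner's formula with $F(q,n)\le 1$, a logarithmic lower bound on each factor $F(q^{d_i},n_i)$, and grouping the distinct irreducible factors by degree via $\abs{\Irr_d}\le q^d/d$ to obtain a harmonic sum, which gives a loss of only $n^{O(1)}$. The one difference is cosmetic. The paper uses $-\log(1-x)\le 2x$ for $0\le x\le 1/2$, which also covers $q=2$ (needed when the lemma is reused in characteristic $2$), whereas your $q\ge 3$ step would need the constant $9/4$ replaced by $4$ there, via $\sum_j (u^j-1)^{-1}\le u/(u-1)^2\le 4/u$ for $u\ge 2$.
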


\begin{proof}
Let $g = \prod_i f_i^{n_i}$ be a monic degree $n$ polynomial over $\F_q$, where the $f_i$ are distinct irreducibles. 
The number of matrices $A \in \slfrak_n(\F_q)$ with characteristic polynomial $g$ is given by \Cref{thm:reiner_count}. We can bound the numerator by $F(q,n) \leq 1$. In order to bound the denominator, we can estimate each factor $F(q^{\deg f_i}, n_i)$ by taking logarithms and using the inequality $-\log(1-x)\leq 2x$ for $0\leq x\leq 1/2$. We obtain, using a crude bound,
\[
\log\prod_{i} F(q^{\deg{f_i}},n_i)^{-1}
\leq
2\sum_{d\leq n}\abs{\Irr_d}\sum_{j\ge 1}q^{-jd}
\leq
4\sum_{d\leq n}\frac1d
\leq
4(1+\log n).
\]
Hence every trace zero monic polynomial $g$ of degree $n$ has at most $e^4 n^4 q^{n^2-n}$ matrices $A \in \slfrak_n(\F_q)$ with $\chi_A=g$. Dividing by $\abs{\slfrak_n(\F_q)}$ and summing over $g\in\mathcal P$ gives
\[
\P\bigl(\chi_A\in\mathcal P\bigr)\le e^4n^4q^{n^2-n}|\mathcal P| q^{1-n^2}
= e^4n^4 \frac{|\mathcal P|}{q^{n-1}}. \qedhere
\]
\end{proof}

\subsection{Smooth polynomials with prescribed trace}

A monic polynomial of degree $n$ over $\F_q$ is called \emph{$M$-smooth} if all of its irreducible factors have degree at most $M$. Their total number $S_{n,M}$ is controlled by classical estimates. The following upper bound will suffice for us.

\begin{lemma} \label{lem:upper_bound_smooth_polynomials}
There are absolute constants $C, c > 0$ such that the following holds. Let $M$ be a positive integer with $C \log n \leq M \leq n/2$. Then
\[
\frac{S_{n,M}}{q^n}
\leq
\exp\left(
  - c
  \frac{n}{M}
  \log\frac{n}{M}
  \right).
\]
\end{lemma}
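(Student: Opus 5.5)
The plan is Rankin's trick applied to the generating function of $M$-smooth polynomials. Write $u = 1/q$ and introduce a parameter $\sigma \in (0,1)$ to be chosen later. Since every $M$-smooth monic polynomial factors uniquely into irreducibles of degree at most $M$, the generating function for these polynomials is
\[
\sum_{k \geq 0} S_{k,M} z^k = \prod_{d \leq M} (1 - z^d)^{-\abs{\Irr_d}}.
\]
All coefficients are nonnegative. Evaluating at $z = q^{-\sigma}$ therefore gives
\[
S_{n,M}\, q^{-n\sigma} \leq \prod_{d\le M}(1-q^{-d\sigma})^{-\abs{\Irr_d}}.
\]
Multiplying by $q^{n(\sigma-1)}$ yields
\[
\frac{S_{n,M}}{q^n} \leq q^{-n(1-\sigma)} \exp\Bigl(\sum_{d\le M} \abs{\Irr_d}\,(-\log(1-q^{-d\sigma}))\Bigr).
\]

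Next I bound the exponent using $\abs{\Irr_d} \leq q^d/d$ and $-\log(1-x) \leq x + O(x^2)$. This gives
\[
\sum_{d\le M} \abs{\Irr_d}\,(-\log(1-q^{-d\sigma})) \lesssim \sum_{d\leq M} \frac{q^{d(1-\sigma)}}{d} + O(1).
\]
The $O(1)$ absorbs the second-order terms when $\sigma > 1/2$, since then $\sum_d q^{d(1-2\sigma)}/d$ converges. Set $\varepsilon = 1-\sigma$ and $\theta = \varepsilon\log q$. The sum becomes $\sum_{d\le M} e^{d\theta}/d$, which is at most $O\bigl(e^{M\theta}/(M\theta)\bigr) + \log(1/\theta) + O(1)$. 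Altogether,
\[
\log\frac{S_{n,M}}{q^n} \leq -n\theta + O\Bigl(\frac{e^{M\theta}}{M\theta}\Bigr) + O(\log M).
\]

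The key step is the choice of $\theta$. Let $u = n/M$ and take $M\theta = \log(u\log u)$, the standard Rankin choice. Then the main term is
\[
-n\theta = -u\log(u\log u) \leq -u\log u.
\]
The error term is
\[
\frac{e^{M\theta}}{M\theta} = \frac{u \log u}{\log(u\log u)} \leq u.
\]
Both the error term and the $O(\log M)$ term must be absorbed by $\tfrac12 u\log u$. The first is fine once $u$ exceeds an absolute constant. In the boundary regime $u$ near $2$ the claimed bound is only a constant less than $1$, so there I argue directly: $S_{n,M} \leq q^n(1 - \abs{\Irr_n}/q^n)$, combined with a crude count of polynomials having an irreducible factor of degree greater than $n/2$. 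I may also simply shrink $c$.

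The main obstacle is the $O(\log M)$ term and, more generally, ensuring $\sigma > 1/2$. The latter needs $\theta < \tfrac12\log q$, that is, $\log(u\log u) < \tfrac12 M \log q$. This is exactly where the hypothesis $M \geq C\log n$ enters: it gives $\log(u\log u) \leq 2\log n \leq M\log q/2$ for large $C$, even when $q=3$. The same hypothesis handles $\log M$ against $u\log u$ when $u$ is large. When $u$ is moderate, $\log M$ could dominate, so I will control this more carefully. Rather than the crude $\log(1/\theta)$ bound, I keep the small-$d$ terms of $\sum_{d} q^{d\varepsilon}/d$ paired with the $-\log(1-q^{-d})$ structure, i.e. I compare with the full product $\prod_{d\ge1}(1-q^{-d}z^d \cdots)$ at $\sigma=1$, which equals the generating function of all polynomials. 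The quantity to bound is then
\[
\sum_{d\le M}\frac{q^{d\varepsilon}-1}{d} \leq \sum_{d\leq M} \varepsilon\log q\, e^{d\theta} = O(e^{M\theta}/M),
\]
which removes the $\log M$ loss entirely. If the constants still misbehave for small $u$, I fall back on the cited estimates of \cite{hildebrand1993integers,kuttner2022enumeration}, which give $S_{n,M}/q^n = \rho(u)^{1+o(1)}$ with $\rho(u) = u^{-u(1+o(1))}$ in exactly this range.
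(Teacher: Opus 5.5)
There is a genuine gap: the Rankin bound does not reach an intermediate range of $u$, and the step you propose to repair it is wrong. Evaluating at $z=q^{-\sigma}$ with $\sigma$ near $1$ always carries the factor
\[
\prod_{d\le M}(1-q^{-d})^{-\abs{\Irr_d}}=\sum_{k\ge 0}S_{k,M}\,q^{-k}\ \ge\ M+1,
\]
because $S_{k,M}=q^k$ for $k\le M$. This truncated product is not the (divergent) generating function of all polynomials at $z=1/q$; it has size of order $M$. Splitting $\sum_{d\le M}q^{d\varepsilon}/d=\sum_{d\le M}1/d+\sum_{d\le M}(q^{d\varepsilon}-1)/d$ therefore leaves the $\log M+O(1)$ where it was. (Also, for small $\theta$ the sum $\sum_{d\le M}\theta e^{d\theta}$ is of order $e^{M\theta}$, not $e^{M\theta}/M$. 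The usable bound is $\sum_{d\le M}(e^{d\theta}-1)/d=O(e^{M\theta}/(M\theta))$.)

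So your argument only gives $S_{n,M}/q^n\le M\exp(-(1+o(1))u\log u)$. This is informative only when $u\log u$ dominates $\log M$, i.e. for $u\ge K\log n/\log\log n$ with some constant $K$. Your direct argument near $u=2$ gives only the constant bound $S_{n,M}/q^n\le 1-\sum_{n/2<d\le n}\abs{\Irr_d}q^{-d}\approx 1-\log 2$. Shrinking $c$ then handles $u\le U_0$ for a fixed $U_0$. Nothing in the proposal handles $U_0\le u\le K\log n/\log\log n$, where $u\to\infty$ and the target $e^{-cu\log u}$ tends to $0$. The closing fallback is a hedge, not an argument: you would need an estimate for polynomials that is uniform over the whole range $M\ge C\log n$ (the $o(1)$ in $\rho(u)^{1+o(1)}$ must be uniform), and the Hildebrand--Tenenbaum reference concerns integers.

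The paper gets exactly this uniformity from Gorodetsky's estimate $S_{n,M}/q^n=\rho(u)\exp(O(n\log n/M^2))$, valid for $M\ge\log(n\log^2 n)/\log q$, combined with $\rho(u)\le e^{-cu\log u}$. Since $n\log n/M^2=u\log n/M\le u/C$, the error is absorbed once $C$ is large.

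If you prefer to stay elementary, keep Rankin for $u\ge K\log n/\log\log n$ and use a recursion below that. From $\deg f=\sum_{g\mid f}\Lambda(g)$ you get $nS_{n,M}=\sum_{k\ge1}\Lambda_M(k)S_{n-k,M}$, with $\Lambda_M(k)=q^k$ for $k\le M$ and $\Lambda_M(k)\le 2q^M$ for $k>M$. Writing $\Psi(j)=S_{j,M}/q^j$, this gives $\Psi(j)\le\frac{M}{j}\max_{j-M\le i<j}\Psi(i)+\frac{2}{j}$. Iterating about $u/2$ times yields $\Psi(n)\le(2/u)^{u/2-1}+8/n$. This suffices whenever $cu\log u\le\log n-O(1)$, which covers the missing range once $c$ is small compared with $1/K$.
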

\begin{proof}
Put $u=n/M \geq 2$. By the uniform estimate for smooth polynomials \cite[Theorem 1.10, Proposition 1.8, (1.18)]{gorodetsky2023uniform}, we have
\begin{equation} \label{eq:uniform_estimate_for_smooth_polynomials}
\frac{S_{n,M}}{q^n}
=
\rho(u) \exp
\left(
  O\left(\frac{n\log n}{M^2}\right)
\right)
\end{equation}
in the range $M \geq \log(n\log^2 n)/\log q$, where $\rho$ is the Dickman function. As $q\geq 2$, this range contains $M\geq C\log n$ once $C$ is large enough. Let us now bound the two factors. For the Dickman function we use the standard estimate \cite[(1.7)]{hildebrand1993integers}
\[
\rho(u)\le\exp(-c u\log u)\qquad(u\ge 2)
\]
for an absolute constant $c>0$. For the error term, the hypothesis $M\ge C\log n$
gives
\[
\frac{n\log n}{M^2}
=u \frac{\log n}{M}
\leq \frac{u}{C}.
\]
Absorb the implied constant from \cref{eq:uniform_estimate_for_smooth_polynomials} into $C$, so that the error term in the exponential is at most $u/C$. Taking $C\ge 2/(c\log 2)$, we thus have $u/C \leq (c/2) u \log u$, and so
\[
\frac{S_{n,M}}{q^n}
\leq
\exp\left( -\frac{c}{2} u\log u \right).
\]
The proof is complete.
\end{proof}

A subtlety is that we will need the count with the trace coefficient prescribed to be zero. For this we use the exact formulas of Kuttner and Wang for smooth polynomials with a prescribed trace coefficient, which crucially avoid the loss of a factor of $q$ that a naive
averaging argument relying on the preceding lemma would incur. 

\begin{theorem}[{\cite[Corollary 17]{kuttner2022enumeration}}] \label{thm: KW22}
Let $S_{n,M}(0)$ denote the number of monic $M$-smooth polynomials over $\F_q$ of characteristic $p$ of degree $n$ with trace zero. For $i \geq 1$ let
\[
a_i=\frac{1}{iq}\sum_{\substack{k\mid i\\ p\nmid k}}\mu(k)q^{i/k},
\qquad 
b_i=\frac{1}{i}\sum_{\substack{k\mid i\\ p\mid k}}\mu(k)q^{i/k},
\]
where $\mu$ is the M\"{o}bius function, and for $k \geq 1$ let
\[
A_k(a_i,b_i) = [y^k](1-(-y)^p)^{a_iq/p}(1+y)^{b_i},
\]
where $[y^n]G(y)$ is the coefficient of $y^n$ in a formal power series $G(y)$. Then
\[
S_{n,M}(0)
=
\frac{1}{q}S_{n,M}
+
\frac{q-1}{q}J_{n,M},
\qquad
J_{n,M} = [z^n]\prod_{i=M+1}^{n}
\sum_{k\ge 0}A_k(a_i,b_i)(-1)^kz^{ik}.
\]
\end{theorem}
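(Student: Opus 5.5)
The statement is quoted from \cite{kuttner2022enumeration}, but it can be proved directly with additive characters and an Euler product; here is the plan. Fix a nontrivial additive character $\psi$ of $\F_q$, and write $\tau(f)$ for the trace (the sum of the roots, i.e.\ minus the subleading coefficient) of a monic polynomial $f$. Orthogonality gives
\[
S_{n,M}(0)=\frac1q\sum_{c\in\F_q}\sum_{f}\psi(c\,\tau(f)),
\]
where $f$ runs over monic $M$-smooth polynomials of degree $n$. The term $c=0$ contributes $S_{n,M}/q$.

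For $c\neq 0$, substitute $f(x)\mapsto c^{n}f(x/c)$. This is a bijection on monic $M$-smooth polynomials of degree $n$, since it preserves the degrees of irreducible factors, and it multiplies $\tau$ by $c$. Hence all $q-1$ nontrivial characters give the same sum $J:=\sum_f\psi(\tau(f))$. It remains to show $J=J_{n,M}$.

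Since $\tau(fg)=\tau(f)+\tau(g)$, the weight $\psi\circ\tau$ is completely multiplicative on monic polynomials. This gives the Euler product
\[
\sum_{f\ \text{monic}}\psi(\tau(f))z^{\deg f}=\prod_{P}\bigl(1-\psi(\tau(P))z^{\deg P}\bigr)^{-1}.
\]
The left side equals $1$: for each $m\geq1$ the trace of a monic degree-$m$ polynomial is uniformly distributed, so the degree-$m$ coefficient vanishes. Splitting the product at degree $M$, the smooth generating function $\prod_{\deg P\le M}(1-\psi(\tau P)z^{\deg P})^{-1}$ therefore equals $\prod_{\deg P>M}(1-\psi(\tau P)z^{\deg P})$. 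Only degrees $i\le n$ affect the coefficient of $z^n$. This is exactly why $J_{n,M}$ is a product over $i=M+1,\dots,n$.

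Next, fix $i$ and let $N_0(i)$ and $N_1(i)$ be the numbers of monic irreducibles of degree $i$ with trace $0$ and with a given trace $t\neq0$. The count $N_1(i)$ does not depend on $t$, by the same scaling argument applied to irreducibles. Set $y=z^i$. The values $\psi(c)$ for $c\in\F_q$ run over the $p$-th roots of unity, each taken $q/p$ times, so $\prod_{c\in\F_q}(1-\psi(c)y)=(1-y^p)^{q/p}$. Hence
\[
\prod_{\deg P=i}(1-\psi(\tau P)y)=(1-y)^{N_0(i)-N_1(i)}(1-y^p)^{N_1(i)q/p}.
\]
Replacing $y$ by $-y$ and expanding $\sum_k A_k(-1)^k z^{ik}$ matches the stated form $(1+y)^{b_i}(1-(-y)^p)^{a_iq/p}$ term by term, provided $a_i=N_1(i)$ and $b_i=N_0(i)-N_1(i)$.

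The main work is this last identification, i.e.\ counting irreducibles of degree $i$ by trace. I would do it through roots. An irreducible $P$ of degree $i$ corresponds to $i$ elements of $\F_{q^i}$ of exact degree $i$, and $\tau(P)=\tr_{\F_{q^i}/\F_q}(\alpha)$ for any root $\alpha$. Let $T_t(i)$ be the number of $\alpha\in\F_{q^i}$ with $\tr\alpha=t$; this is $q^{i-1}$ for every $t$. Elements of exact degree $d\mid i$ have $\tr_{q^i/q}(\alpha)=(i/d)\tr_{q^d/q}(\alpha)$, and this factor vanishes exactly when $p\mid i/d$. So Möbius inversion over $d\mid i$ splits according to whether $p$ divides $i/d$. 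This should yield $iN_1(i)=\frac1q\sum_{k\mid i,\,p\nmid k}\mu(k)q^{i/k}$. It should also yield $i(N_0(i)-N_1(i))=\sum_{k\mid i,\,p\mid k}\mu(k)q^{i/k}$, which comes from the divisors with $p\mid k$, where all traces collapse to $0$. I expect this inversion, and in particular keeping track of the $p$-divisibility, to be the main obstacle; the rest is formal.
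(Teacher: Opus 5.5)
Your argument is correct. It is a different route in the sense that the paper gives no proof at all: this statement is imported verbatim from \cite[Corollary 17]{kuttner2022enumeration} and used only as a black box in \Cref{lem:smooth-trace-zero}.

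Your individual steps all check out:
\begin{itemize}
\item The scaling $f(x)\mapsto c^n f(x/c)$ is a bijection on monic $M$-smooth polynomials of degree $n$ that multiplies the trace by $c$. Hence every nontrivial character gives the same sum.
\item The full weighted Euler product equals $1$, because the subleading coefficient of a monic polynomial of positive degree is free. This is exactly what turns the smooth generating function into a finite product over degrees $M<i\le n$.
\item The identity $\prod_{c\in\F_q}(1-\psi(c)y)=(1-y^p)^{q/p}$ holds because $\psi$ maps $\F_q$ onto the $p$-th roots of unity with fibres of size $q/p$.
\item After $y\mapsto -y$, the factor for degree $i$ matches $\sum_k A_k(a_i,b_i)(-1)^k y^k$ precisely when $a_i=N_1(i)$ and $b_i=N_0(i)-N_1(i)$.
\end{itemize}

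The step you flag as the main obstacle is easier than you expect.
\begin{itemize}
\item For $t\neq 0$, your root count reads $\sum_{d\mid i,\ p\nmid i/d} dN_1(d)=q^{i-1}$. This is a Dirichlet convolution of $d\mapsto dN_1(d)$ with the principal character $\chi_p$ modulo $p$. Since $\chi_p$ is completely multiplicative, its Dirichlet inverse is $\mu\chi_p$. This gives $iN_1(i)=\frac1q\sum_{k\mid i,\ p\nmid k}\mu(k)q^{i/k}=ia_i$, which is the Yucas count the paper itself quotes in \Cref{lem:trace-equidist}.
\item For $b_i$ you need no separate inversion at trace $0$. Subtract $qN_1(i)$ from $N_0(i)+(q-1)N_1(i)=\abs{\Irr_i}=\frac1i\sum_{k\mid i}\mu(k)q^{i/k}$, and only the divisors with $p\mid k$ survive.
\item Your own route through $T_0(i)-T_t(i)=0$ also works, but it needs one more convolution identity.
\end{itemize}

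What your derivation buys over the citation is a short, self-contained proof and a transparent meaning for the formula. The numbers $a_i$ and $b_i$ are just counts of degree-$i$ irreducibles by trace. This makes it evident that $a_iq/p$ is an integer and that $b_i$ can be a negative exponent, for instance $b_p=-q/p$. Likewise, $J_{n,M}$ is simply the twisted character sum $\sum_f\psi(\tau(f))$ over $M$-smooth $f$ of degree $n$.
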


Using this formula, we can now show that $M$-smooth polynomials of trace zero with $M \gg \log n$ are rare among all monic trace zero polynomials.

\begin{lemma}\label{lem:smooth-trace-zero}
There are absolute constants $C, c>0$ such that the following holds. Let $M$ be a positive integer with $C\log n\le M\le n/2$. Then
\[
\frac{S_{n,M}(0)}{q^{n-1}}
\le
\exp \left(- c \frac nM\log\frac nM\right)
+
q^{1-n(1-1/p)}
\exp\left(C \frac nM\log n\right).
\]
\end{lemma}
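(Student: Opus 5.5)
Starting from the exact formula of \Cref{thm: KW22}, dividing by $q^{n-1}$ gives
\[
\frac{S_{n,M}(0)}{q^{n-1}} = \frac{S_{n,M}}{q^n} + \frac{q-1}{q^{n}}\, J_{n,M} \le \frac{S_{n,M}}{q^n} + \frac{|J_{n,M}|}{q^{n-1}}.
\]
For the first term, \Cref{lem:upper_bound_smooth_polynomials} applies directly (after possibly enlarging $C$) and yields $\exp(-c\frac nM\log\frac nM)$. It therefore remains to show that
\[
|J_{n,M}| \le q^{n/p}\exp\bigl(C\tfrac nM\log n\bigr),
\]
since then $|J_{n,M}|/q^{n-1}\le q^{1-n(1-1/p)}\exp(C\frac nM\log n)$.

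To bound $J_{n,M}$, I expand the product. Its $z^n$ coefficient is a sum over tuples $(k_i)_{M<i\le n}$ with $\sum_i ik_i=n$ of $\pm\prod_i A_{k_i}(a_i,b_i)$. Since every index $i$ with $k_i>0$ exceeds $M$, at most $n/M$ of the $k_i$ are nonzero. The number of such tuples is at most the number of partitions of $n$ into parts larger than $M$, which I bound crudely by $n^{n/M}=\exp(\frac nM\log n)$: each tuple is determined by at most $n/M$ parts, each at most $n$.

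The main step is a per-factor bound of the shape
\[
|A_k(a_i,b_i)| \le n^{O(k)}\, q^{ik/p}.
\]
Multiplying over $i$ then gives $\prod_i|A_{k_i}| \le q^{\sum ik_i/p}\, n^{O(\sum k_i)} \le q^{n/p}\exp(O(\tfrac nM\log n))$, because $\sum k_i\le n/M$. To prove the per-factor bound, I use the estimates
\[
|b_i| \le \tfrac1i\sum_{k\mid i,\,p\mid k} q^{i/k} \le q^{i/p}, \qquad |a_i q/p| \le \tfrac{2q^i}{ip},
\]
where the first holds because all divisors $k$ appearing there satisfy $k\ge p$. The factor $(1-(-y)^p)^{a_iq/p}$ contributes only powers $y^{pm}$, with coefficient $\binom{a_iq/p}{m}$ of size at most $(2q^i/i)^m/m!$, which is roughly $q^{im}$. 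Choosing $y^{pm}$ from this factor and $y^{k-pm}$ from $(1+y)^{b_i}$, the latter contributes at most $|b_i|^{k-pm}\le q^{i(k-pm)/p}$ in size, so the total is about $q^{im+i(k-pm)/p}=q^{ik/p}$, up to combinatorial factors of at most $(k+1)\cdot 2^{k}$. Summing over $m\le k/p$ gives the claimed bound, with the polynomial factors absorbed into $n^{O(k)}$.

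I expect the main obstacle to be making the binomial-coefficient bounds rigorous. The exponents $a_iq/p$ and $b_i$ are rational and possibly negative, since Möbius signs make $a_i$ non-integral in general. I would handle this with $|\binom{x}{m}|\le (|x|+m)^m/m!$, which, using $m\le k\le n$, costs only an extra factor $n^{O(k)}$, absorbed into $\exp(C\frac nM\log n)$. The resulting exponent $q^{ik/p}$ is exactly what produces the $q^{1-n(1-1/p)}$ saving. After enlarging $C$ to absorb all these factors, the two bounds combine to give the statement.
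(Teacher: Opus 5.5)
Your proposal is correct and follows the paper's proof essentially step for step. You apply the Kuttner--Wang formula, bound the $S_{n,M}$ term with \Cref{lem:upper_bound_smooth_polynomials}, prove the per-factor bound $|A_k(a_i,b_i)|\le n^{O(k)}q^{ik/p}$ by binomial expansion using $|a_i|q/p\le 2q^i/i$ and $|b_i|\le q^{i/p}$, and count the contributing tuples crudely by $n^{n/M}$. The only cosmetic difference is that you absorb the additive $m$ in $|\binom{x}{m}|\le(|x|+m)^m$ directly into the $n^{O(k)}$ factor, whereas the paper invokes $q^i\ge n$ (from $i>M\ge C\log n$) to handle the $a_i$-factor.
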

\begin{proof}
Using the explicit formula of \Cref{thm: KW22}, it suffices to control the
$S_{n,M}$ and $J_{n,M}$ terms. The first is immediate from
\Cref{lem:upper_bound_smooth_polynomials}. For the second, expand the two factors in
the definition of $A_k(a_i,b_i)$ as binomial series, so that
\[
\abs{A_k(a_i,b_i)}
\leq
\sum_{j=0}^{\lfloor k/p\rfloor}
\abs{\binom{a_iq/p}{j}}
\abs{\binom{b_i}{k-pj}}.
\]
We bound the binomial symbols using $|\binom{X}{r}| \leq (\abs X+r)^r$ for $r \geq 0$. Note that the explicit formulas for $a_i,b_i$ admit the crude bounds
\[
\abs{a_i} q / p \leq 2q^i / i,
\qquad
\abs{b_i} \leq q^{i/p}.
\]
The only indices $i,k$ that contribute to $J_{n,M}$ are those for which $M<i\le n$ and $0\le k\le n/i$. Since $i>M\ge C\log n$, we have $q^{i} \geq n$ once $C\geq 2$. Hence $k \leq n/i \leq q^i/i$, and so for any $0 \leq j \leq \lfloor k/p\rfloor$, we have
\[
\abs{\binom{a_iq/p}{j}}
\leq
(Cq^i / i)^{j},
\qquad
\abs{\binom{b_i}{k-pj}}
\leq
(Cnq^{i/p})^{k-pj}
\]
after increasing $C$. Hence
\[
\abs{A_k(a_i,b_i)}
\leq
\sum_{j=0}^{\lfloor k/p\rfloor}
(C q^i / i )^{j}
(Cnq^{i/p})^{k-pj}
\leq
(2Cn)^k q^{ik/p}.
\]
A monomial contributing to $J_{n,M}$ is determined by integers $k_i$ with $0 \leq k_i \leq n/i$, $M<i\le n$, and $\sum_{i>M}ik_i=n$. For such a tuple the preceding bound gives
\[
\prod_{i=M+1}^n\abs{A_{k_i}(a_i,b_i)}
\leq
(2Cn)^{\sum_{i>M}k_i} 
q^{\sum_{i>M}ik_i/p}
\leq
q^{n/p}
(2Cn)^{n/M}.
\]
After increasing $C$, the latter is $q^{n/p} \exp(C (n/M) \log n)$. It remains to count the contributing tuples $(k_i)_{i>M}$. Their number is at most the number of
partitions of $n$ into parts greater than $M$. A crude bound suffices here. Such a tuple has at most $n/M$ nonzero parts, and there are at most $n$ choices for each part, so the number of tuples is at most $n^{n/M}$. Combining this with the preceding estimate yields
\[
\abs{J_{n,M}}
\leq
q^{n/p} 
\exp\left(C \frac{n}{M} \log n\right)
\]
after increasing $C$. This completes the proof.
\end{proof}

\begin{proposition}\label{prop:medium-factor-exists}
There are absolute constants $C, c, n_0 >0$ such that the following holds. Let $n \geq n_0$ and let $A\in\slfrak_n(\F_q)$ be uniformly random. Then $\chi_A$ has an irreducible factor of degree $> \lceil C\log n\rceil$ with probability $1-\exp(-cn)$.
\end{proposition}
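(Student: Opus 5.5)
The failure event is that $\chi_A$ is $M$-smooth with $M = \lceil C_1 \log n\rceil$. Here $C_1$ is a constant I will choose large, and it becomes the $C$ of the statement. Indeed, $\chi_A$ has no irreducible factor of degree $> M$ exactly when all its irreducible factors have degree at most $M$. Since $A \in \slfrak_n(\F_q)$, $\chi_A$ is monic of degree $n$ with trace zero. So I take $\mathcal P$ to be the set of monic trace zero $M$-smooth polynomials of degree $n$, so that $\abs{\mathcal P} = S_{n,M}(0)$. By \Cref{lem:prob_char_poly_in_P},
\[
\P(\chi_A \text{ is $M$-smooth}) \leq C_0 n^4 \frac{S_{n,M}(0)}{q^{n-1}} ,
\]
where $C_0$ is the absolute constant of that lemma.

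Next I apply \Cref{lem:smooth-trace-zero}, with constants $C, c$, to this $M$. Its hypothesis $C \log n \leq M \leq n/2$ holds once $C_1 \geq C$ and $n \geq n_0$. The result is a bound by two terms.

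For the first term, $u = n/M \asymp n/(C_1\log n)$ and $\log u = \log n - O(\log\log n)$. Hence for large $n$ we get $(n/M)\log(n/M) \geq n/(2C_1)$, and the first term is at most $\exp(-cn/(2C_1))$.

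For the second term, $q$ is odd, so $p \geq 3$ and $1-1/p \geq 2/3$. This gives a bound
\[
q^{1-2n/3}\exp\!\left(C\frac{n}{M}\log n\right) \leq q\exp\!\left(-\tfrac{2n}{3}\log 2 + \tfrac{C}{C_1} n\right).
\]
Choosing $C_1 \geq 3C/\log 2$ makes the exponent at most $-\tfrac{n}{3}\log 2$. The prefactor $q$ looks dangerous, but it can be absorbed because $q^{1-2n/3}$ already has exponent at least $n/2$ in $q$ for $n \geq 6$. So I will keep it in the form $q^{-n/6}$ times the $\exp$ factor, which is at most $\exp(-cn)$ uniformly in $q \geq 3$.

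Finally, the factor $C_0 n^4$ is $\exp(O(\log n))$ and is absorbed by shrinking $c$ and enlarging $n_0$. This gives probability at least $1-\exp(-c'n)$ that $\chi_A$ has an irreducible factor of degree $> \lceil C_1\log n\rceil$.

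The main obstacle I expect is the bookkeeping of constants. The constant $C_1$ must dominate the $C$ of \Cref{lem:smooth-trace-zero} in two roles: it must satisfy that lemma's range condition, and the ratio $C/C_1$ must be small enough that the $\exp(C(n/M)\log n)$ loss is beaten by the $q^{-n(1-1/p)}$ saving. I also have to check that the first term really decays linearly in $n$, not merely as $n/\log n$. This works only because $M$ is of order exactly $\log n$, so that $\log(n/M)$ cancels the $\log n$ in $M$.
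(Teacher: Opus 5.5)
Your proposal is correct and is essentially the paper's proof: you bound the probability that $\chi_A$ is $M$-smooth via \Cref{lem:prob_char_poly_in_P} and \Cref{lem:smooth-trace-zero}, use $M\asymp\log n$ so that $(n/M)\log(n/M)\gg n$, and take the constant in $M$ large relative to the lemma's constants so that $q^{-n(1-1/p)}$ beats the $\exp(C(n/M)\log n)$ loss. The only difference is cosmetic: the paper disposes of the prefactor $q$ via $q^{1-n(1-1/p)}\le 2^{1-n/2}$ using only $1-1/p\ge 1/2$, which also covers $p=2$ as later needed in \Cref{sec:even-characteristic}. Your use of $p\ge 3$ instead relies on the section's standing odd-$q$ assumption, though your argument adapts to $p=2$ trivially.
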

\begin{proof}
Let $c_0$ and $C_0$ be the constants of \Cref{lem:smooth-trace-zero}. We may assume $c_0 \leq 1/3$. Set $C=\max\{C_0, 4C_0/c_0\}$, $M=\lceil C\log n\rceil$. Suppose throughout that $n$ is large enough so that $M\le 2C\log n$, and $\log(n/M) \geq (\log n)/2$. By \Cref{lem:prob_char_poly_in_P} and \Cref{lem:smooth-trace-zero}, the probability that $\chi_A$ is $M$-smooth is at most
\[
C n^4\left(
\exp\left(-c_0\frac{n}{M}\log\frac{n}{M}\right)
+
q^{1-n(1-1/p)}\exp\left(C_0\frac{n}{M}\log n\right)
\right).
\]
after possibly increasing $C$. Since $M\geq C\log n$ we have $(n/M) \log n\leq n/C$, so
\[
c_0\frac{n}{M}\log\frac{n}{M} 
\geq 
\frac{c_0 n}{2C\log n} \frac{\log n}{2}
=\frac{c_0}{4C}n,
\qquad
C_0\frac{n}{M}\log n 
\leq 
\frac{C_0}{C}n 
\leq 
\frac{c_0}{4}n.
\]
As $q\geq 2$ we also have $q^{1-n(1-1/p)}\leq 2^{1-n/2}\leq\exp(-n/3)$, and since
$c_0\leq 1$ the second term in the bracket is at most $\exp(-n/3+n/4)\leq \exp(-n/12)$.
Both terms are therefore at most $\exp(-2cn)$ for $c = c_0/(8C)$, and so
\[
\P\bigl(\chi_A\text{ is }M\text{-smooth}\bigr)
\leq 2C n^4 \exp(-2cn)
\ll \exp(-cn) \]
for large enough $n$. The proof is complete.
\end{proof}



\subsection{Excluding repeated differences}

Let $\Irr_d$ be the set of monic irreducible polynomials of degree $d$ over $\F_q$, and for $t\in\F_q$ let $\Irr_{d,t} \subseteq \Irr_d$ consist of polynomials with trace $t$. Up to an error term, the trace is equidistributed in $\Irr_d$. 

\begin{lemma} \label{lem:trace-equidist}
We have
\[
\abs{
\frac{\abs{\Irr_d}}{q^d/d} - 1
}
\leq d q^{-d/2},
\qquad
\abs{
\frac{\abs{\Irr_{d,t}}}{q^{d-1}/d} - 1
}
\leq 2 d q^{-d/2 + 1}.
\]
\end{lemma}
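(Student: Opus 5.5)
The first bound comes from the Gauss--Möbius formula $d\abs{\Irr_d} = \sum_{k \mid d} \mu(k) q^{d/k}$. The main term is $q^d$. Every other term has $k \geq 2$, so it is at most $q^{d/2}$ in absolute value, and there are at most $d$ divisors. Hence
\[
\abs{d\abs{\Irr_d} - q^d} \leq \sum_{j \leq d/2} q^j \leq 2q^{d/2},
\]
or, more crudely, at most $d q^{d/2}$. Dividing by $q^d$ gives $\abs{\abs{\Irr_d}/(q^d/d) - 1} \leq d q^{-d/2}$.

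For the trace-refined bound, I would count elements rather than polynomials, using additive characters. Fix a nontrivial additive character $\psi$ of $\F_q$. An irreducible $f$ of degree $d$ has exactly $d$ roots, and these are precisely the elements $\alpha \in \F_{q^d}$ that generate $\F_{q^d}$ over $\F_q$. The trace of $f$, meaning the sum of its roots, equals $\tr_{\F_{q^d}/\F_q}(\alpha)$. So
\[
d\abs{\Irr_{d,t}} = \#\{\alpha \text{ generating } \F_{q^d} : \tr(\alpha) = t\}.
\]
Orthogonality of characters turns this into
\[
\frac{1}{q}\sum_{c\in\F_q}\psi(-ct)\sum_{\alpha \text{ gen}} \psi(c\,\tr\alpha).
\]
The $c=0$ term is $d\abs{\Irr_d}/q$. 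For $c \neq 0$ I would write the sum over generators as the full sum over $\F_{q^d}$ minus the sum over the proper subfields $\F_{q^e}$ with $e \mid d$, $e<d$. Over the full field $\F_{q^d}$ the map $\alpha \mapsto \psi(c\,\tr\alpha)$ is a nontrivial character, because the trace is surjective, so that sum vanishes. The leftover sum over proper subfields has absolute value at most $\sum_{e\le d/2} q^e \leq 2q^{d/2}$.

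Combining these, $d\abs{\Irr_{d,t}}$ differs from $d\abs{\Irr_d}/q$ by at most $\frac{q-1}{q}\cdot 2q^{d/2} \leq 2q^{d/2}$. Using the first bound, $d\abs{\Irr_d}/q$ differs from $q^{d-1}$ by at most $2q^{d/2-1}$. Altogether $\abs{d\abs{\Irr_{d,t}} - q^{d-1}} \leq 2q^{d/2}+2q^{d/2-1}$. Dividing by $q^{d-1}$ gives at most $(2+2/q)q^{-d/2+1} \leq 2dq^{-d/2+1}$ whenever $d \geq 2$.

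For $d=1$ the bound is checked directly, since $\abs{\Irr_{1,t}} = 1$ and $\abs{1-1}=0$. The only real care needed is in the constant bookkeeping: in particular, one should not multiply by the number of divisors when the geometric sum over $e \le d/2$ is already at most $2q^{d/2}$. Beyond that I expect no obstacle.
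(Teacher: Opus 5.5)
Your proof is correct, but for the second bound it takes a different route from the paper.

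\emph{The paper's route.} The paper cites Rosen for the first inequality. For the second, it quotes Yucas's exact formula $\abs{\Irr_{d,t}} = \frac{1}{qd}\sum_{m\mid d,\ p\nmid m}\mu(m)q^{d/m}$, valid only for $t\neq 0$, and bounds its error by $q^{d/2-1}$. It then recovers the case $t=0$ by complementation, writing $\abs{\Irr_{d,0}} = \abs{\Irr_d}-\sum_{t\neq 0}\abs{\Irr_{d,t}}$ and using the first inequality.

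\emph{Your route.} You count elements of exact degree $d$ with prescribed trace, using orthogonality of additive characters. The character sums over all of $\F_{q^d}$ vanish for $c\neq 0$. The only error therefore comes from the union of proper subfields, which has at most $2q^{d/2}$ elements.

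\emph{What each buys.} Your argument is self-contained and treats every $t$, including $t=0$, in one stroke with no special case. It also gives relative error at most $(2+2/q)q^{1-d/2}$ with no factor of $d$, so the stated inequality holds with room to spare. Likewise, your first bound $2q^{-d/2}$ is sharper than the stated $dq^{-d/2}$. You are right that $d=1$ needs the direct check, since there $2+2/q>2d$. The paper's route is shorter on the page because it relies on the literature, and it gives an exact count for $t\neq 0$. That extra precision is not used anywhere later in the paper, so nothing is lost by your approach.
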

\begin{proof}
The first inequality is \cite[Theorem 2.2]{rosen2013number}. For the second one, it follows from \cite{yucas2006irreducible} that for any $t \neq 0$ we have
\[
  \abs{\Irr_{d,t}} = \frac{1}{qd} \sum_{\substack{m \mid d \\ (m,p) = 1}} \mu(m) q^{d/m},
\]
hence
\[ 
\abs{
\abs{\Irr_{d,t}} - \frac{q^{d-1}}{d}
}
\leq
\frac{1}{qd}
\sum_{\substack{m \mid d \\ m > 1}} q^{d/m}
\leq
\frac{1}{qd} d  q^{d/2} = q^{d/2 - 1}.
\]
Dividing by $q^{d-1}/d$ gives the (slightly stronger) second inequality for $t \neq 0$. Using this, we can extract the inequality for $t = 0$ by
\[
\abs{
\frac{\abs{\Irr_{d,0}}}{q^{d-1}/d} - 1
}
=
\abs{
q \left( \frac{\abs{\Irr_d}}{q^{d}/d} - 1 \right)
-
\sum_{t\neq 0} 
\left( 
\frac{\abs{\Irr_{d,t}}}{q^{d-1}/d} 
- 1
\right)
}
\leq
2 d q^{-d/2 + 1}.
\qedhere
\]
\end{proof}

\begin{proposition}\label{prop:no-bad}
For every $\gamma>0$ there are constants $C,n_0>0$ such that the following holds. Let $n\ge n_0$ and let $A\in\slfrak_n(\F_q)$ be uniformly random. The probability that $\chi_A$ has an irreducible factor $f$ with $\deg f > \lceil C\log n\rceil$ and with some element in $\Diff(f)$ having multiplicity $> 1$ in $\Diff(\chi_A)$ is at most $n^{-\gamma}$.
\end{proposition}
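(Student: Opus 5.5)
We reduce the statement to counting polynomials via \Cref{lem:prob_char_poly_in_P} and then bound that count using \Cref{prob_some_diff_has_higher_multiplicity}. Let $M=\lceil C\log n\rceil$. Let $\mathcal P$ be the set of trace zero monic $g$ of degree $n$ that factor as $g=fh$ with $f$ irreducible, $d=\deg f>M$, and some element of $\Diff(f)$ of multiplicity $>1$ in $\Diff(g)$. By \Cref{lem:prob_char_poly_in_P} it suffices to show $\abs{\mathcal P}\leq n^{-\gamma-4}q^{n-1}/C'$. We bound $\abs{\mathcal P}$ by a union bound over the degree $d\in(M,n]$ and the cofactor $h$ of degree $n-d$, summing the number of admissible $f$.

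Fix $d$ and $h$. The condition that $fh$ has trace zero pins down the trace of $f$: it must equal $-\operatorname{tr}(h)$. So the relevant $f$ range over $\Irr_{d,t}$ for a single $t$. \Cref{prob_some_diff_has_higher_multiplicity} counts bad $f$ among all of $\Irr_d$. Its bound is at most $6(n-d)^2 d^3 q^{-d/4}\abs{\Irr_d}\leq 6n^5q^{3d/4}$, or more simply $n^3q^{3d/4}$ plus the $h$-dependent terms, read directly from its proof. For this we take the count of generators $\alpha$ rather than the probability. The case $h$ constant ($d=n$) is covered by \Cref{prop:number_of_polynomials_with_repeated_difference}.

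Ignoring the trace restriction on $f$, the number of bad $f$ for fixed $h$ is therefore at most $6n^5q^{3d/4}$. The trace restriction on $g$ must instead be spent on $h$. I would sum over $h$ of degree $n-d$ with arbitrary trace, giving $q^{n-d}$ choices, and impose trace zero via the trace of $f$. The danger is a loss of a factor of $q$. To avoid it, split according to whether $n-d\geq 1$. When $n-d\ge1$, for each bad $f$ count only the $h$ with $\operatorname{tr}h=-\operatorname{tr}f$, of which there are exactly $q^{n-d-1}$. This gives at most $6n^5q^{3d/4}q^{n-d-1}=6n^5q^{n-1}q^{-d/4}$. When $d=n$, $f=g$ has trace zero and we get at most $n^3q^{3n/4}\leq q^{n-1}n^3q^{1-n/4}$.

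Summing over $d>M$, we get $\abs{\mathcal P}/q^{n-1}\leq 7n^6 q^{-M/4}\sum_{j\ge0}q^{-j/4}\ll n^6 q^{-M/4}\leq n^6 2^{-C\log n/4}$. Multiplying by $Cn^4$ from \Cref{lem:prob_char_poly_in_P} yields $n^{10-(C\log 2)/4}\cdot O(1)$. This is at most $n^{-\gamma}$ once $C\geq 4(\gamma+11)/\log 2$ and $n\geq n_0$. The main obstacle I expect is checking that \Cref{prob_some_diff_has_higher_multiplicity} really applies with $h$ possibly sharing the factor $f$ or having repeated factors. If $f\mid h$, then $f$ has multiplicity $\geq2$ in $g$, and every difference of $f$ is automatically repeated in $\Diff(fh)$. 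Such $g$ must be counted separately. Their number is at most $\sum_{d>M}\abs{\Irr_d}q^{n-2d}\ll q^{n-M}$, which is again negligible after the trace normalisation, handled as above by fixing the trace through the cofactor. One must also confirm that the proof's three cases cover $\mathrm{mult}>1$ with roots of $h$ on both sides. They do, since a repeated difference $\delta=\alpha_1-\alpha_2$ equals either another $f$-difference, a mixed difference, or an $h$-difference.
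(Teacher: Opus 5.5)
Your skeleton matches the paper's: a union over $d>M$ and the cofactor $h$, \Cref{prob_some_diff_has_higher_multiplicity} for each fixed $h$, then \Cref{lem:prob_char_poly_in_P}. However, the step where you ``spend the trace restriction on $h$'' is invalid.

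\emph{The gap.} Badness is not a property of $f$ alone. Let $\mathcal D_d(h)$ be the set of $f\in\Irr_d$ for which some element of $\Diff(f)$ has multiplicity $>1$ in $\Diff(fh)$. This set depends on $h$, and \Cref{prob_some_diff_has_higher_multiplicity} only bounds $\abs{\mathcal D_d(h)}$ for each fixed $h$. Your count, ``for each bad $f$ take the $q^{n-d-1}$ cofactors with $\tr h=-\tr f$'', needs a single set of size $O(n^5q^{3d/4})$ containing $\mathcal D_d(h)$ for all $h$ at once. No such set exists. When $2d\leq n$, every $f\in\Irr_d$ lies in $\mathcal D_d(h)$ for any $h$ divisible by $f$, so $\bigcup_h\mathcal D_d(h)=\Irr_d$. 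The order of summation cannot be swapped this way.

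\emph{The repair.} The factor $q$ you were trying to save is harmless. Bound
\[
\abs{\mathcal D_d(h)\cap\Irr_{d,-\tr h}}\leq\abs{\mathcal D_d(h)}\leq 6n^2d^2q^{3d/4},
\]
and sum over all $q^{n-d}$ monic $h$. This gives at most $6n^2d^2q^{n-d/4}$ bad trace zero $g$ for each $d$, that is $6n^2d^2q^{1-d/4}$ relative to $q^{n-1}$. For $d>M\geq4$ the exponent $1-d/4$ is negative, so $q^{1-d/4}\leq 2^{1-d/4}$. Hence the sum over $d>M$ is $\ll n^4 2^{-M/4}$ uniformly in $q$, and multiplying by $Cn^4$ and taking $C$ large finishes the proof as you intended.

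This is in effect what the paper does. Its detour through $\Irr_{d,t}$ and \Cref{lem:trace-equidist} yields $\abs{\mathcal D_d(h)\cap\Irr_{d,t}}\ll n^2d^2q^{3d/4}$, which is no better than the trivial inclusion above. It then ends with $\sum_{d>M}n^6d^2q^{1-d/4}$, absorbing the factor $q$ in exactly this way.

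Two smaller points. Treating $d=n$ separately via \Cref{prop:number_of_polynomials_with_repeated_difference} is a sensible precaution, since \Cref{prob_some_diff_has_higher_multiplicity} assumes $h$ is non-constant. The separate count for $f\mid h$ is correct but unnecessary. A conjugate $a$ of $\alpha$ among the roots of $h$ is already a case of the mixed condition $\Fr^i(\alpha)-\Fr^j(\alpha)=\Fr^k(\alpha)-a$, which that proposition's union bound covers.
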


\begin{proof}
Set $M=\lceil C\log n\rceil$. Fix $d>M$ and let $g = f h$ with $f$ irreducible of degree $d$ and $\deg h= n-d$. The polynomial $g$ is of trace zero if and only if $\tr f=-\tr h$. For fixed $h$, let $\mathcal D_d(h) \subseteq \Irr_d$ be the set of $f$ for which some element in $\Diff(f)$ has multiplicity $>1$ in $\Diff(fh) = \Diff(g)$. By
\Cref{prob_some_diff_has_higher_multiplicity}, for uniformly random $f\in\Irr_d$, we have
\[
\P (f\in \mathcal D_d(h) )\le 6(\deg h)^2 d^3 q^{-d/4}\le 6n^2 d^3 q^{-d/4}.
\]
Since $h$ is fixed, the trace of $f$ is prescribed to be $- \tr h$. By \Cref{lem:trace-equidist}, we have for any $t \in \F_q$ that
\[
\frac{\abs{\Irr_{d,t}}}{\abs{\Irr_d}}
\geq
\frac{q^{d-1}/d - 2 q^{d/2}}{q^d/d + q^{d/2}}
=
\frac{1}{q} \frac{1 - 2 d q^{1-d/2}}{1 + d q^{-d/2}}
\geq
\frac{1}{2q}
\]
for $n$ and therefore also $d$ large enough. Hence
\[
\frac{\abs{\mathcal D_d(h)\cap\Irr_{d,t}}}{\abs{\Irr_{d,t}}}
\le\frac{\P_{f\in\Irr_d}(f\in \mathcal D_d(h))}{\abs{\Irr_{d,t}}/\abs{\Irr_d}}
\ll n^2 d^3 q^{-d/4+1}.
\]
Taking $t=-\tr h$ and using $\abs{\Irr_{d,t}}\ll q^{d-1}/d$, it follows that
\[
\abs{\mathcal D_d(h) \cap \Irr_{d,-\tr h}}
\ll n^2 d^2 q^{d-d/4}.
\]
Summing over the $q^{n-d}$ choices of $h$, we conclude that the number of trace zero degree $n$ polynomials $g$ with a distinguished irreducible factor $f$ of degree $d$ with a high multiplicity difference in $g = fh$ is $\ll n^2 d^2 q^{n-d/4}$. It now follows from \Cref{lem:prob_char_poly_in_P} that $\chi_A$ is a polynomial of this sort for some $d > M$ with probability
\[
\ll 
\sum_{d > M} n^6 d^2 q^{1 - d/4}
\ll
n^6 \sum_{d>M} d^2 2^{-d/4}
\ll
n^6 M^2 2^{-M/4}
\sum_{\ell \geq 1} \ell^2 2^{-\ell/4}
\ll
n^6 M^2 2^{-M/4}.
\]
Since $M=\lceil C\log n\rceil$, this is $\ll n^{6-(C\log 2)/4}(\log n)^2$. Choosing $C$ large enough makes this at most $n^{-\gamma}$ for all large enough $n$.
\end{proof}

\begin{theorem}\label{thm:good-medium-factor}
For every $\gamma>0$ there are constants $C,n_0>0$ such that the following holds. Let
$n\ge n_0$, and let $A\in\slfrak_n(\F_q)$ be uniformly random. Then with probability at least $1-n^{-\gamma}$ the characteristic polynomial $\chi_A$ has an irreducible factor $f$ with $\deg f > \lceil C\log n\rceil$ and every element of $\Diff(f)$ has multiplicity $1$ in $\Diff(\chi_A)$.
\end{theorem}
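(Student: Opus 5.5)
The plan is to combine \Cref{prop:medium-factor-exists} and \Cref{prop:no-bad} by a union bound. The event in the theorem contains the intersection of two events. The first, $E_1$, is that $\chi_A$ has an irreducible factor of degree $> \lceil C\log n\rceil$. The second, $E_2$, is that no irreducible factor $f$ of $\chi_A$ of degree $> \lceil C\log n\rceil$ has an element of $\Diff(f)$ with multiplicity $>1$ in $\Diff(\chi_A)$. On $E_1\cap E_2$, take $f$ to be any factor provided by $E_1$. Then $E_2$ says exactly that every element of $\Diff(f)$ has multiplicity $1$ in $\Diff(\chi_A)$. (This incidentally forces $f$ to have multiplicity one in $\chi_A$, since a repeated factor would duplicate each of its differences, but the theorem does not require this separately.)

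The only point requiring care is that the two propositions must be used with a \emph{common} constant $C$. First apply \Cref{prop:no-bad} with $\gamma+1$ in place of $\gamma$, obtaining constants $C_2,n_2$. This proposition is monotone in $C$: enlarging $C$ shrinks the event being bounded, so it remains valid for every $C\geq C_2$. \Cref{prop:medium-factor-exists} is not literally stated as monotone in $C$. However, its proof goes through verbatim for any
\[
C \geq \max\{C_0, 4C_0/c_0\},
\]
where $C_0,c_0$ are the constants of \Cref{lem:smooth-trace-zero}, with failure probability $\exp(-c n)$ for $c=c_0/(8C)$ and $n$ large enough in terms of $C$. Alternatively, one can observe directly that the complementary event, that $\chi_A$ is $M$-smooth for $M=\lceil C\log n\rceil$, is bounded there by a quantity that decays exponentially in $n$ for any such fixed $C$. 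So we set
\[
C=\max\{C_2,\ C_0,\ 4C_0/c_0\}.
\]

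With this choice, for $n\ge n_0$ with $n_0$ large,
\[
\P(E_1^c)+\P(E_2^c)\le \exp(-cn)+n^{-\gamma-1}\le n^{-\gamma},
\]
enlarging $n_0$ so that $\exp(-cn)\le n^{-\gamma}-n^{-\gamma-1}$. The main (mild) obstacle is this bookkeeping of constants, specifically justifying that \Cref{prop:medium-factor-exists} holds with the possibly larger threshold $C\ge C_2$. As explained above, I would handle it by rereading its proof, where $C$ only needs to exceed an explicit lower bound. Everything else is a direct union bound.
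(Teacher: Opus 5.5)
Your proposal is correct and matches the paper's argument, which proves the theorem as an immediate union bound of \Cref{prop:medium-factor-exists} and \Cref{prop:no-bad}. Your extra bookkeeping fills in the common-constant detail that the paper leaves implicit: you apply \Cref{prop:no-bad} with $\gamma+1$, note that its event shrinks as $C$ grows, and check that the proof of \Cref{prop:medium-factor-exists} goes through for any $C \geq \max\{C_0, 4C_0/c_0\}$.
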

\begin{proof}
Immediate from \Cref{prop:medium-factor-exists} and \Cref{prop:no-bad}.
\end{proof}

\section{Generation across the block}
\label{sec:block-generation}

On an eigenblock all of whose eigenvalue differences are unique, \Cref{prop:extraction} produces the elementary matrix $E_{ij}$ exactly at those positions where the entry $Y_{ij}$ of the conjugated second matrix is nonzero. In this section we determine the joint law of these entries on a single irreducible block, and show that the produced elementary matrices generate the whole eigenblock with high probability.

\subsection{A block law for eigenblocks}

Let $A \in \slfrak_n(\F_q)$ be semisimple, and suppose its characteristic polynomial $\chi_A$ has an irreducible factor $f$ of degree $m \leq n$ with multiplicity $1$. Then
\[
  \F_q^n = U \oplus W, 
  \qquad 
  U = \ker f(A), 
  \quad 
  W = \ker(\chi_A/f)(A)
\]
is a decomposition into $A$-invariant subspaces with $\dim U = m$. Conjugate $A$ to diagonal form respecting this decomposition by some $P \in \GL_n(\overline{\F}_q)$. For any other matrix $B \in \slfrak_n(\F_q)$, the entries of $Y = P^{-1}BP$ lying in the resulting $m \times m$ upper left eigenblock depend on $B$ only through the compression
\[
  \beta = \pi B|_U \in \End_{\F_q}(U),
\]
where $\pi \colon \F_q^n \to U$ is the projection along $W$. As $B$ ranges uniformly over $\slfrak_n(\F_q)$, the compression $\beta$ ranges uniformly over $\End_{\F_q}(U)$ if $m < n$, since the entries within $U$ are unconstrained, and the trace can be set freely using $W$ since $m < n$. If $m = n$, the matrix $\beta = B$ simply ranges uniformly over $\slfrak(U)$. The study of the eigenblock therefore reduces to the following situation: $U$ is an $m$-dimensional vector space over $\F_q$ on which $\alpha = A |_U$ acts with irreducible characteristic polynomial $f$, and $\beta \in \End_{\F_q}(U)$ (or $\slfrak(U)$) is uniformly random. The next lemma describes the matrix of $\beta$ in a carefully chosen eigenbasis of $\alpha$. We identify $U$ with an extension of $\F_q$ on which $\alpha$ acts as multiplication by some root $a$ of $f$.

\begin{lemma}\label{lem:block-law}
Let $\ell = q^m$ and view $\F_\ell$ as a vector space over $\F_q$. Let $\alpha \in \End_{\F_q}(\F_\ell)$ be multiplication by a field generator $a \in \F_\ell$, with eigenvalues $\lambda_i = \Fr^i(a)$ for $i \in \Z/m\Z$. Let $\beta \in \End_{\F_q}(\F_\ell)$ \textup{(}or $\slfrak(\F_\ell)$\textup{)} be uniformly random, and let $\beta_{ij} \in \F_\ell$ be the entries of $\beta$ in an eigenbasis $(v_i)$ of $\alpha$, normalised by $v_i = \Fr^i(v_0)$. Then there are independent uniformly random variables $Z_1, \dots, Z_{m-1} \in \F_\ell$ with
\[
  \beta_{ij} = \Fr^i(Z_{j-i})
  \qquad (i,j \in \Z/m\Z, \ i \neq j).
\]
\end{lemma}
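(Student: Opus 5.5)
The plan is to avoid writing down $P$ explicitly and instead exploit the Galois symmetry of the eigenbasis. Extend scalars to $U_{\overline{\F}_q} = \F_\ell \otimes_{\F_q} \overline{\F}_q$. Let $\sigma = \id \otimes \Fr$ be the Frobenius acting on the scalar factor. This map is semilinear, bijective, and commutes with every $\F_q$-linear endomorphism of $\F_\ell$ extended to $U_{\overline{\F}_q}$, in particular with $\alpha$ and $\beta$. If $v_0$ is an eigenvector of $\alpha$ with eigenvalue $\lambda_0 = a$, then $v_i := \sigma^i(v_0)$ is an eigenvector with eigenvalue $\Fr^i(a) = \lambda_i$. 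Moreover $\sigma^m(v_0)$ is again an eigenvector for $\lambda_0$, whose eigenspace is one-dimensional since $f$ is separable. I would first rescale $v_0$ so that $\sigma^m(v_0) = v_0$. By Hilbert 90 for $\overline{\F}_q/\F_\ell$ this is possible; concretely, $v_0$ can be taken in $\F_\ell \otimes \F_\ell$. This makes the normalisation $v_i = \Fr^i(v_0)$ consistent on $\Z/m\Z$.

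Next I would derive the shift relation. Write $\beta v_j = \sum_i \beta_{ij} v_i$ and apply $\sigma$. Since $\sigma\beta = \beta\sigma$ and $\sigma$ is semilinear, this gives
\[
\beta v_{j+1} = \sum_i \Fr(\beta_{ij}) v_{i+1},
\]
hence $\beta_{i+1,j+1} = \Fr(\beta_{ij})$ for all $i,j \in \Z/m\Z$. Iterating yields $\beta_{ij} = \Fr^i(\beta_{0,j-i})$. Going once around the cycle gives $\Fr^m(\beta_{0k}) = \beta_{0k}$, so each $\beta_{0k}$ lies in $\F_\ell$. I then set $Z_k := \beta_{0k}$ for $k \in \Z/m\Z$.

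It remains to determine the law of $(Z_0, \dots, Z_{m-1})$. The map $\Phi\colon \beta \mapsto (Z_0,\dots,Z_{m-1})$ goes from $\End_{\F_q}(\F_\ell)$, of $\F_q$-dimension $m^2$, to $\F_\ell^m$, also of $\F_q$-dimension $m^2$, and it is $\F_q$-linear. It is injective because the $Z_k$ determine every entry $\beta_{ij}$, and hence $\beta$. So $\Phi$ is a linear bijection, and a uniform $\beta$ yields uniform, independent $Z_0,\dots,Z_{m-1} \in \F_\ell$.

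For the $\slfrak$ case I would compute the trace in the eigenbasis:
\[
\tr\beta = \sum_i \Fr^i(Z_0) = \tr_{\F_\ell/\F_q}(Z_0).
\]
Thus $\slfrak(\F_\ell)$ is the preimage under $\Phi$ of the subspace $\{\tr Z_0 = 0\}$, which constrains only $Z_0$. Hence $Z_1,\dots,Z_{m-1}$ remain independent and uniform on $\F_\ell$. The only delicate point I expect is the normalisation step, namely checking that $v_0$ can be chosen $\sigma^m$-fixed so that indices are genuinely cyclic. If that causes trouble, the fallback is to use the explicit model $\F_\ell\otimes\F_\ell \cong \F_\ell^m$, $x\otimes y \mapsto (\Fr^i(x)y)_i$. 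In this model $\beta = \sum_k c_k \Fr^k$ is a uniform $\F_\ell$-linear combination of Frobenius powers, the $\Fr^k$ act as cyclic shifts of the coordinates, and the same formula can be read off directly.
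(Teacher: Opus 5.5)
Your proof is correct, but it reaches the formula by a different route from the paper's.

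\textbf{The paper's route.} The paper first writes $\beta=\sum_{d=0}^{m-1} Z_d\Fr^d$ as a linearized polynomial. The law of $(Z_0,\dots,Z_{m-1})$, and the constraint $\tr_{\F_\ell/\F_q}(Z_0)=0$ in the $\slfrak$ case, are then read off from the isomorphism $\End_{\F_q}(\F_\ell)\cong\F_\ell^m$ before any diagonalisation. Only afterwards does it compute the matrix of $\beta$ in the explicit model $\F_\ell\otimes_{\F_q}\F_\ell\cong\prod_{i\in\Z/m\Z}\F_\ell$. There multiplication by $Z$ becomes $\diag(\Fr^i(Z))_i$ and $\Fr$ acts as a cyclic shift; this is exactly your fallback.

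\textbf{Your route.} You obtain the twisted-circulant shape $\beta_{i+1,j+1}=\Fr(\beta_{ij})$ abstractly, from the fact that the semilinear scalar Frobenius $\sigma=\id\otimes\Fr$ commutes with $\beta$. You then define $Z_k=\beta_{0k}$ and replace the linearized-polynomial basis by an injectivity-plus-dimension count for $\Phi$. The trace is computed from the diagonal at the end.

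\textbf{What each buys.} Your argument is coordinate-free and applies to every normalised eigenbasis at once, namely any $\sigma^m$-fixed $v_0$, equivalently $v_0\in\F_\ell\otimes\F_\ell$. The paper's computation is tied to the standard basis of the product model. Your version also makes explicit two points the paper leaves tacit:
\begin{itemize}
\item the $\Fr$ in $v_i=\Fr^i(v_0)$ must be the Frobenius acting on scalars;
\item this normalisation closes up consistently around $\Z/m\Z$.
\end{itemize}
The paper's version buys concreteness. Its $Z_d$ have an intrinsic meaning as the Frobenius coefficients of $\beta$, so their independence and uniformity are visible immediately rather than via a dimension count.

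A small remark: Hilbert 90 is more than the rescaling step needs. It amounts to solving $t^{\ell-1}=c^{-1}$ in $\overline{\F}_q$, or, as you note yourself, to choosing the eigenvector $v_0$ over $\F_\ell$, where $\sigma^m$ acts trivially.
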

\begin{proof}
Every $\F_q$-linear endomorphism of $\F_\ell$ is uniquely a linearized polynomial
\[
  \beta = \sum_{d=0}^{m-1} Z_d \Fr^d \qquad (Z_d \in \F_\ell),
\]
and the correspondence $\beta \mapsto (Z_0, \dots, Z_{m-1})$ is an isomorphism between the vector spaces $\End_{\F_q}(\F_\ell)$ and $\F_\ell^m$. If $\beta$ is uniformly random in $\End_{\F_q}(\F_\ell)$, then all $Z_d$ are independent and uniformly distributed in $\F_\ell$. If instead $\beta$ is uniformly random in $\slfrak(\F_\ell)$, then the trace zero condition imposes only the restriction $\tr_{\F_\ell/\F_q}(Z_0)=0$. Hence in either case the variables $Z_1, \dots, Z_{m-1}$ are independent and uniformly distributed in $\F_\ell$. Now diagonalise $\alpha$ by extending scalars to $\F_\ell$. The underlying vector space becomes
\[
  \F_\ell \otimes_{\F_q} \F_\ell \cong \prod_{i \in \Z/m\Z} \F_\ell,
  \qquad 
  x \otimes y \mapsto \bigl( \Fr^i(x) y \bigr)_i.
\]
Multiplication by $a$ acts on the $i$-th factor as multiplication by $\lambda_i$, so this factor is precisely the eigenline spanned by $v_i = \Fr^i(v_0)$. Similarly, multiplication by $Z \in \F_\ell$ acts on the $i$-th factor by $\Fr^i(Z)$, and $\Fr$ cyclically shifts factors by $(w_i)_i \mapsto (w_{i+1})_i$ for $w$ in the product. Hence
\[
  ( \beta w )_i = 
  \sum_{d=0}^{m-1} \Fr^i(Z_d) w_{i+d},
\]
and evaluating at $w = v_j$ gives $\beta_{ij} = \Fr^i(Z_{j-i})$, as claimed.
\end{proof}

Since $\Fr$ is a bijection, the entry $\beta_{ij}$ is nonzero if and only if $Z_{j-i} \neq 0$. The nonzero entries of $\beta$ therefore fill out the cyclic diagonals indexed by
\[
  D = \{ d \in \Z/m\Z \mid Z_d \neq 0, \ d \neq 0 \}.
\]

\subsection{Propagation across the block}

The generation of the eigenblock is governed by a single condition on the nonzero diagonal indices $D$.

\begin{lemma} \label{lem:propagate}
Let $\F$ be a field and let $D \subseteq \Z/m\Z \setminus \{0\}$ generate the group $\Z/m\Z$. Then the matrices $\{ E_{ij} \mid i \neq j, \ j - i \in D \}$ generate the Lie algebra $\slfrak_m(\F)$.
\end{lemma}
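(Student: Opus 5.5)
The plan is to work directly with the bracket relation $[E_{ij}, E_{jk}] = E_{ik}$ for $i \neq k$, which holds over any field $\F$. Let $L$ be the Lie subalgebra generated by $\{ E_{ij} \mid i \neq j, \ j-i \in D \}$. I will show by a combinatorial closure argument that $L$ contains every off-diagonal elementary matrix, and then recover the diagonal part. The case $m = 1$ is trivial, since $\slfrak_1(\F) = 0$, so assume $m \geq 2$.

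First I define $T = \{ d \in \Z/m\Z \setminus \{0\} \mid E_{i,i+d} \in L \text{ for all } i \in \Z/m\Z \}$, which is the set of complete cyclic diagonals contained in $L$. By hypothesis $D \subseteq T$. The key closure property is that if $d, e \in T$ and $d+e \neq 0$, then $d+e \in T$. Indeed, for each $i$ we have $[E_{i,i+d}, E_{i+d,i+d+e}] = E_{i,i+d+e}$, because the indices $i$ and $i+d+e$ are distinct.

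Next I show that $T = \Z/m\Z \setminus \{0\}$. Since $\Z/m\Z$ is finite, the subgroup generated by $D$ equals the submonoid generated by $D$, so every nonzero $g$ can be written as $g = d_1 + \dots + d_r$ with all $d_k \in D$. Choose such a representation of minimal length $r$. Then every partial sum $s_k = d_1 + \dots + d_k$ is nonzero. For $k = r$ this holds because $s_r = g \neq 0$. For $k < r$, if $s_k = 0$ we could delete $d_1, \dots, d_k$ and obtain a shorter representation of $g$, contradicting minimality. Induction on $k$ with the closure property then gives $s_k \in T$ for every $k$, and in particular $g \in T$. This step, ruling out vanishing partial sums where the bracket would instead produce a diagonal element, is the only delicate point, and minimality handles it.

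Hence $L$ contains every $E_{ij}$ with $i \neq j$. It also contains $[E_{ij}, E_{ji}] = E_{ii} - E_{jj}$, and these elements span the trace zero diagonal matrices. Together with the off-diagonal elementary matrices they span $\slfrak_m(\F)$. Conversely, $L \subseteq \slfrak_m(\F)$, since every generator has trace zero and $\slfrak_m(\F)$ is closed under brackets. Therefore $L = \slfrak_m(\F)$, regardless of the characteristic of $\F$.
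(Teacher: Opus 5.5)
Your proof is correct and follows essentially the same route as the paper. The key points match: a minimal-length representation of each nonzero residue as a sum of elements of $D$ keeps all partial sums nonzero, so the brackets $[E_{ij},E_{jk}]=E_{ik}$ never land on the diagonal, and $[E_{ij},E_{ji}]=E_{ii}-E_{jj}$ supplies the diagonal; your set $T$ of complete cyclic diagonals with induction on partial sums is just a repackaging of the paper's explicit iterated bracket.
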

\begin{proof}
Let $j-i, l - j \in D$. Note that $[E_{ij}, E_{jl}] = E_{il}$ for $i \neq l$, which raises the column index of $E_{ij}$ by $l - j$ to $E_{il}$. Every element of $\Z/m\Z$ is a sum of elements of $D$. Fix $r \in \Z/m\Z$ with $r \neq 0$ and choose a representation $r = d_1 + \dots + d_k$ with all $d_s \in D$ of minimal length $k$. Its partial sums $P_t = d_1 + \dots + d_t$ for $t < k$ are all nonzero by minimality. Hence, for every $i \in \Z/m\Z$, we can form the iterated bracket
\[
  [ \cdots [[ E_{i, i + P_1}, E_{i + P_1, i + P_2} ], E_{i + P_2, i + P_3} ], \cdots ] = E_{i, i + r},
\]
where each matrix $E_{i + P_t, i + P_{t+1}}$ has column minus row index equal to $d_{t+1} \in D$ and so lies in our supply. Thus $E_{i, i + r}$ lies in the generated Lie algebra for every $i$ and every $r \neq 0$, yielding all off-diagonal elementary matrices. Finally $[E_{ij}, E_{ji}] = E_{ii} - E_{jj}$ for $i \neq j$ supplies the diagonal, and so we obtain all of $\slfrak_m(\F)$.
\end{proof}

The hypothesis that $D$ generates $\Z/m\Z$ is also necessary. If $D$ lies in a proper subgroup $H \leq \Z/m\Z$, then $[E_{ij}, E_{jl}] = E_{il}$ keeps the column minus row index in $H$, so the generated algebra meets no cyclic diagonal outside $H$ and is proper.

\subsection{The block generates}

Combining the propagation lemma with the law of \Cref{lem:block-law} gives the main result of this section.

\begin{proposition} \label{prop:block-generates}
Let $m \geq 2$, let $\ell = q^m$, and let $Z_1, \dots, Z_{m-1} \in \F_\ell$ be independent and uniform. Set $D = \{ d \in \Z/m\Z \mid Z_d \neq 0, \ d \neq 0 \}$. Then the elementary matrices $\{ E_{ij} \mid i \neq j, \ j - i \in D \}$ generate $\slfrak_m(\overline{\F}_q)$ with probability at least $1 - m \ell^{-m/2}$.
\end{proposition}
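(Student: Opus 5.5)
By \Cref{lem:propagate}, applied with $\F = \overline{\F}_q$, the elementary matrices $\{E_{ij} \mid i \neq j,\ j-i \in D\}$ generate $\slfrak_m(\overline{\F}_q)$ as soon as $D$ generates $\Z/m\Z$. So it suffices to show that $D$ fails to generate $\Z/m\Z$ with probability at most $m\ell^{-m/2}$.

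If $D$ does not generate $\Z/m\Z$, then $D$ lies in some proper subgroup $H < \Z/m\Z$. Every proper subgroup is contained in a maximal one, and the maximal subgroups are $H_r = r\Z/m\Z$ for the primes $r \mid m$. Hence
\[
\P(D \text{ does not generate}) \leq \sum_{r \mid m,\ r \text{ prime}} \P(D \subseteq H_r).
\]

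Now fix a prime $r \mid m$. The event $D \subseteq H_r$ means that $Z_d = 0$ for every $d \in \{1,\dots,m-1\}$ with $d \notin H_r$. There are $m - m/r \geq m/2$ such indices. Since the $Z_d$ are independent and uniform on $\F_\ell$, each vanishes with probability $\ell^{-1}$, so
\[
\P(D \subseteq H_r) = \ell^{-(m - m/r)} \leq \ell^{-m/2}.
\]

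Finally, the number of primes dividing $m$ is at most $\log_2 m \leq m$. A union bound over these primes gives a total failure probability of at most $m\ell^{-m/2}$, as claimed.

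There is no real obstacle here. The only points to get right are reducing "generates" to "not contained in a maximal subgroup", and noting that each maximal subgroup $H_r$ misses at least half of the nonzero residues, which is what produces the exponent $m/2$.
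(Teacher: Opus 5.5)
Your proposal is correct and matches the paper's proof step for step. Like the paper, you reduce via \Cref{lem:propagate} to $D$ generating $\Z/m\Z$, then take a union bound over the maximal subgroups $r\Z/m\Z$ for primes $r \mid m$; each such inclusion forces $m - m/r \geq m/2$ independent $Z_d$ to vanish, giving the bound $m\ell^{-m/2}$.
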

\begin{proof}
By \Cref{lem:propagate} applied over $\overline{\F}_q$, the stated elementary matrices generate $\slfrak_m(\overline{\F}_q)$ whenever $D$ generates $\Z/m\Z$. It therefore suffices to bound the probability that $D$ lies in a maximal subgroup, i.e. in a subgroup $r\Z/m\Z$ of multiples of some prime $r \mid m$. The inclusion $D \subseteq r\Z/m\Z$ forces $Z_d = 0$ for each of the $m - m/r \geq m/2$ indices $d$ with $r \nmid d$, and these are independent events, each occurring with probability $\ell^{-1}$. Hence
\[
  \P(D \text{ does not generate } \Z/m\Z)
  \leq \sum_{\substack{r \mid m \\ r \text{ prime}}} \ell^{-(m - m/r)}
  \leq m \ell^{-m/2}. \qedhere
\]
\end{proof}

\section{Obtaining a largish block}
\label{sec:obtaining-a-largish-block}

We now assemble the preceding sections. The extraction principle of \Cref{sec:extraction} turns the two features studied in \Cref{sec:char-poly-of-random-matrices} and \Cref{sec:block-generation} into a largish special linear block inside the generated algebra.

\begin{theorem} \label{largish_block}
For every $\gamma > 0$ there are constants $C, n_0 > 0$ such that the following holds. Let $q$ be odd, let $n \geq n_0$, and let $A, B \in \slfrak_n(\F_q)$ be uniformly random. Then, with probability at least $1 - n^{-\gamma}$, there are $P \in \GL_n(\overline{\F}_q)$ and $m > \lceil C \log n \rceil$ such that the Lie algebra generated by $P^{-1}AP, P^{-1}BP$ contains the upper left block $\slfrak_m(\overline{\F}_q)$.
\end{theorem}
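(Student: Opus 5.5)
We combine \Cref{thm:good-medium-factor} for $A$ with \Cref{lem:block-law} and \Cref{prop:block-generates} for $B$, and glue them via \Cref{prop:extraction}. Fix $\gamma>0$ and let $C,n_0$ be the constants of \Cref{thm:good-medium-factor} for the exponent $\gamma+1$. By that theorem, outside an event of probability at most $n^{-\gamma-1}$ the polynomial $\chi_A$ has an irreducible factor $f$ of degree $m>\lceil C\log n\rceil$ such that every element of $\Diff(f)$ has multiplicity $1$ in $\Diff(\chi_A)$. This forces $f$ to divide $\chi_A$ only once: if $f^2\mid\chi_A$, each difference of roots of $f$ would occur twice. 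The differences are also nonzero, since the roots of the irreducible $f$ are distinct over the perfect field $\F_q$.

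Next take $N$ as in \Cref{power_diagonalizable}. Then $A'=A^{q^N}\in\slfrak_n(\F_q)$ is semisimple and has the same characteristic polynomial as $A$. Let $U=\ker f(A')$; this coincides with $\ker f(A)$ up to the Jordan decomposition, and $A'|_U$ has characteristic polynomial $f$. Let $W$ be the complementary $A'$-invariant subspace. Identify $U$ with $\F_{q^m}$ so that $A'|_U$ is multiplication by a root $a$ of $f$. Choose $P$ diagonalising $A'$ so that the first $m$ basis vectors are $v_i=\Fr^i(v_0)$, with eigenvalues $\lambda_i=\Fr^i(a)$, and the remaining vectors form an eigenbasis of $W$. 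This choice depends only on $A$.

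Condition on $A$ and let $B$ be uniform and independent of $A$. As explained before \Cref{lem:block-law}, the compression $\beta=\pi B|_U$ is uniform in $\End_{\F_q}(U)$ if $m<n$, and in $\slfrak(U)$ if $m=n$. The upper left $m\times m$ block of $Y=P^{-1}BP$ is exactly the matrix of $\beta$ in the basis $(v_i)$. So \Cref{lem:block-law} gives $Y_{ij}=\Fr^i(Z_{j-i})$ for $i\ne j<m$, with $Z_1,\dots,Z_{m-1}$ independent and uniform in $\F_{q^m}$. Now take $i\ne j$ inside the block. The difference $\lambda_i-\lambda_j\in\Diff(f)$ has multiplicity $1$ in $\Diff(\chi_A)$, which is the eigenvalue multiset of $X=P^{-1}A'P$. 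Hence \Cref{prop:extraction} puts $E_{ij}$ into $P^{-1}\langle A,B\rangle P$ whenever $Z_{j-i}\ne 0$, that is, for all $j-i\in D$.

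By \Cref{prop:block-generates}, these elementary matrices generate the upper left $\slfrak_m(\overline{\F}_q)$ except with probability at most $m q^{-m^2/2}\le n\,2^{-(C\log n)^2/2}$. This is $\le n^{-\gamma-1}$ for $n$ large. A union bound then gives total failure probability at most $2n^{-\gamma-1}\le n^{-\gamma}$.

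One point needs care. The conclusion concerns the algebra generated by $P^{-1}AP$ and $P^{-1}BP$, whereas the construction uses $X=P^{-1}A'P$ with the same $P$. This is harmless because \Cref{dim_adNXY} already places $\ad_X^k Y$ inside $P^{-1}\langle A,B\rangle P$, and $P^{-1}\langle A,B\rangle P=\langle P^{-1}AP,P^{-1}BP\rangle$. The $\overline{\F}_q$-span of the $E_{ij}$ is taken inside the $\overline{\F}_q$-span of that algebra, which is the reading intended by ``contains $\slfrak_m(\overline{\F}_q)$''.

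I expect the main obstacle to be the identification in the second and third steps. We must check that the eigenbasis normalisation of \Cref{lem:block-law} is compatible with a global diagonalisation $P$ of $A'$, and that the block entries of $Y$ really depend only on $\beta$. This holds because $P$ maps the first $m$ coordinates into $U\otimes\overline{\F}_q$, and $U$, $W$ are invariant, so those entries come from $\pi B|_U$. Beyond that, the only thing to verify is the independence of $A$ and $B$, which justifies conditioning on $A$.
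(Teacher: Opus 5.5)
Your proposal is correct and follows the paper's proof. The steps match: the good factor from \Cref{thm:good-medium-factor}, a diagonalisation of $A^{q^N}$ with the normalised eigenbasis of \Cref{lem:block-law} on the block, extraction via \Cref{prop:extraction}, generation via \Cref{prop:block-generates}, and a union bound. Your remark that the elementary matrices live in the $\overline{\F}_q$-span $P^{-1}(\overline{\F}_q\otimes\langle A,B\rangle)P$, and your bound $m q^{-m^2/2}\le n\,2^{-(C\log n)^2/2}$, are if anything slightly more careful than the paper's write-up.
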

\begin{proof}
By \Cref{power_diagonalizable}, let $N \geq 0$ be such that $A^{q^N}$ is semisimple with the same eigenvalues as $A$, say $\Lambda = (\lambda_1, \dots, \lambda_n)$.  Choose $P_0 \in \GL_n(\overline{\F}_q)$ so that $X = P_0^{-1} A^{q^N} P_0$ is diagonal, and set $Y = P_0^{-1} B P_0$. By \Cref{thm:good-medium-factor}, with probability at least $1 - n^{-2\gamma}$ the polynomial $\chi_A$ has an irreducible factor $f$ of degree $m = \deg f > \lceil C \log n \rceil$ such that every element of $\Diff(f)$ has multiplicity $1$ in $\Diff(\chi_A)$. As $m \geq 2$, the factor $f$ has multiplicity $1$ in $\chi_A$, so its roots are $m$ simple eigenvalues of $A$. After permuting the basis we place these at the first $m$ indices and order them as $\lambda_i = \Fr^i(a)$ for a root $a$ of $f$ and $i \in \Z/m\Z$, the eigenbasis being the normalised one of \Cref{lem:block-law}.

For distinct $i, j$ in the block, the difference $\delta = \lambda_i - \lambda_j$ lies in $\Diff(f)$ and so has multiplicity $1$ in $\Diff(\chi_A)$. As $\Lambda$ lists the roots of $\chi_A$ with multiplicity, $(i,j)$ is the unique pair of indices with $\lambda_i - \lambda_j = \delta$. By \Cref{prop:extraction}, the conjugate $P_0^{-1} \langle A, B \rangle P_0$ therefore contains $E_{ij}$ whenever $Y_{ij} \neq 0$. By \Cref{lem:block-law} the block entries satisfy $Y_{ij} = \Fr^i(Z_{j-i})$ for independent uniform $Z_1, \dots, Z_{m-1} \in \F_\ell$, $\ell = q^m$, so $Y_{ij} \neq 0$ exactly when $j - i \in D = \{ d \neq 0 \mid Z_d \neq 0 \}$. Hence $P_0^{-1} \langle A, B \rangle P_0$ contains every $E_{ij}$ with $j - i \in D$, and by \Cref{prop:block-generates} these generate $\slfrak_m(\overline{\F}_q)$ with probability at least $1 - m \ell^{-m/2}$.

We can absorb the permutation into a single $P \in \GL_n(\overline{\F}_q)$, so that the Lie algebra $P^{-1} \langle A, B \rangle P$ contains the upper left block $\slfrak_m(\overline{\F}_q)$. The total probability of failure is
\[
n^{-2\gamma} + m \ell^{-m/2} \leq 
n^{- 2\gamma} + C n^{-(C^2 \log q /2) \log n} \log n \leq
n^{-\gamma}
\]
for large enough $n$.
\end{proof}

\section{Absolute irreducibility}
\label{sec:absolute-irreducibility}

Let $L$ be a Lie algebra with a representation on a vector space $V$ over $\F$. Say $L$ acts \emph{irreducibly} on $V$ if there are no nontrivial $L$-invariant subspaces of $V$. Say $L$ acts \emph{absolutely irreducibly} on $V$ if $\overline{\F} \otimes L$ acts irreducibly on $\overline{\F} \otimes V$. In this section, we prove that a random pair in $\slfrak_n(\F_q)$ generates, with high probability, a Lie subalgebra that acts absolutely irreducibly on $V = \F_q^n$. Together with the largish block produced in the previous section, this is what will force the generated subalgebra to be all of $\slfrak_n(\F_q)$.

Let $A, B \in \slfrak_n(\F_q)$ be uniformly random, and let $L = \langle A, B \rangle$ be the Lie subalgebra they generate. Note that a subspace of $V$, or of $\E \otimes V$ for any extension $\E/\F_q$, is stable under $A$ and $B$ if and only if it is stable under the unital associative algebra $\mathcal A = \F_q \langle A, B \rangle \leq \Mat_n(\F_q)$ generated by $A$ and $B$. Hence $L$ and $\mathcal A$ have exactly the same invariant subspaces over every extension of $\F_q$, and consequently $L$ acts (absolutely) irreducibly on $V$ if and only if $\mathcal A$ acts (absolutely) irreducibly on $V$. This allows us to bring in the tools of Schur, Wedderburn, and Jacobson, which are statements about the associative algebra $\mathcal A$. 

If $\mathcal A$ fails to act absolutely irreducibly, then exactly one of the following occurs:
\begin{enumerate}
  \item $\mathcal A$ acts reducibly already over $\F_q$,
  \item $\mathcal A$ acts irreducibly over $\F_q$, but not absolutely irreducibly.
\end{enumerate}
We bound the probability of each case in turn.

\subsection{The reducible case}

We first record the basic counting estimate that controls reducibility over $\F_q$.

\begin{lemma} \label{lem:rational_invariant_subspace}
Let $V$ be a vector space over $\F_q$ with a proper nontrivial subspace $U$. Let $A \in \slfrak_n(\F_q)$ be uniformly random. Then
\[
  \P(AU \leq U) \leq q^{-\dim U \cdot \codim U}.
\]
\end{lemma}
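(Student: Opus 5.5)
Choose a basis of $V = \F_q^n$ adapted to $U$: take $u_1, \dots, u_k$ spanning $U$, where $k = \dim U$, and extend by $w_1, \dots, w_{n-k}$ to a basis of $V$. In this basis, the condition $AU \leq U$ says exactly that the lower left $(n-k) \times k$ block of $A$ vanishes. So I will show that this block is uniformly distributed over $\Mat_{(n-k)\times k}(\F_q)$ when $A$ is uniform in $\slfrak_n(\F_q)$. Its vanishing then has probability exactly $q^{-k(n-k)} = q^{-\dim U \cdot \codim U}$.

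First, conjugating by the change of basis matrix $P \in \GL_n(\F_q)$ is a linear automorphism of $\slfrak_n(\F_q)$, since it preserves the trace. Hence $P^{-1}AP$ is again uniformly distributed in $\slfrak_n(\F_q)$. We may therefore assume $U$ is spanned by the first $k$ standard basis vectors. The event $AU \leq U$ is then $A_{ij} = 0$ for all $i > k \geq j$.

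Next, the map from $\slfrak_n(\F_q)$ to the lower left block is $\F_q$-linear. It is surjective, because the trace condition involves only diagonal entries and these are disjoint from the off-diagonal block positions. For example, any prescribed lower left block extends to a trace zero matrix by putting zeros elsewhere. A surjective linear map pushes the uniform distribution forward to the uniform distribution, since all its fibres are cosets of the kernel and so have equal size. The lower left block is therefore uniform on $\F_q^{k(n-k)}$, and the probability that it is zero is exactly $q^{-k(n-k)}$, which gives the bound.

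Nothing here is a genuine obstacle. The only point needing care is that the trace constraint does not couple to the off-diagonal block, and this holds because $U$ is proper and nontrivial, so the block consists purely of off-diagonal positions.
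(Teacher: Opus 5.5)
Your proof is correct and follows essentially the same route as the paper. In a basis adapted to $U$, the condition $AU \leq U$ is the vanishing of the $k(n-k)$ off-diagonal lower-left entries, which stay linearly independent on $\slfrak_n(\F_q)$, so the probability is exactly $q^{-k(n-k)}$. You also state explicitly that conjugation by $P \in \GL_n(\F_q)$ preserves the uniform distribution on $\slfrak_n(\F_q)$, a step the paper leaves implicit when it chooses the adapted basis.
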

\begin{proof}
Choose a basis $e_1, \dots, e_n$ of $V$ with $U = \langle e_1, \dots, e_d \rangle$. In this basis, the condition $AU \leq U$ says precisely that the lower left block of $A$ vanishes. These are $d(n-d)$ of the off-diagonal coordinate functionals on $\Mat_n(\F_q)$ that remain linearly independent after restriction to $\slfrak_n(\F_q)$. Hence the subspace they cut out has codimension exactly $d(n-d)$ in $\slfrak_n(\F_q)$.
\end{proof}

\begin{proposition} \label{prop:rational_reducible_rare}
Let $A, B \in \slfrak_n(\F_q)$ be uniformly random. The probability that $A$ and $B$ have a common proper nontrivial invariant subspace over $\F_q$ is at most $4 n q^{1-n}$.
\end{proposition}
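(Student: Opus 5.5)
The plan is a union bound over all proper nontrivial subspaces $U \leq V = \F_q^n$, using \Cref{lem:rational_invariant_subspace} for $A$ and $B$ independently.

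For a fixed $U$ of dimension $d$ with $1 \leq d \leq n-1$, independence and the lemma give
\[
\P(AU \leq U \text{ and } BU \leq U) \leq q^{-2d(n-d)}.
\]
The number of $d$-dimensional subspaces is the Gaussian binomial coefficient $\binom{n}{d}_q$. I will use the standard bound $\binom{n}{d}_q \leq c\, q^{d(n-d)}$, with $c = \prod_{j\geq 1}(1-q^{-j})^{-1} \leq \prod_{j\ge1}(1-2^{-j})^{-1} < 4$. This follows by writing
\[
\binom{n}{d}_q = q^{d(n-d)} \frac{F(q,n)}{F(q,d)F(q,n-d)}
\]
in the notation of \Cref{thm:reiner_count}, and bounding the numerator by $1$ and each factor in the denominator below by $\prod_j (1-2^{-j})$. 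That crude bound gives a constant of about $11.6$. To get $4$, I would instead note that $F(q,n)/F(q,n-d) \leq 1$, which leaves only $1/F(q,d) \leq 1/\prod_j(1-2^{-j}) \approx 3.46 < 4$.

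Summing, the failure probability is at most
\[
\sum_{d=1}^{n-1} 4\, q^{-d(n-d)}.
\]
Since $d(n-d) \geq n-1$ for $1 \leq d \leq n-1$, each term is at most $4q^{1-n}$. There are $n-1 \leq n$ terms, so the total is at most $4nq^{1-n}$, as claimed.

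The only delicate point is the constant in the Gaussian binomial estimate. Everything else is a direct union bound. If the cleaner bound above were to fail, I would fall back on symmetry $d \leftrightarrow n-d$. The extreme terms $d=1$ and $d=n-1$ have exact counts $(q^n-1)/(q-1) \leq 2q^{n-1}$, so each contributes at most $2q^{-(n-1)}$. The middle terms decay like $q^{-2(n-2)}$ times the count, which leaves plenty of room within $4nq^{1-n}$.
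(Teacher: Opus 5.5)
Your proposal is correct and matches the paper's proof. Both take a union bound over $d$-dimensional subspaces, apply \Cref{lem:rational_invariant_subspace} independently to $A$ and $B$, bound $\binom{n}{d}_q \leq 4q^{d(n-d)}$, and use $d(n-d)\geq n-1$. Your justification of the constant via $F(q,n)/F(q,n-d)\leq 1$ and $1/F(q,d)<3.47$ is valid, so the fallback argument is unnecessary (the crude constant is about $12$, not $11.6$, but you do not use it).
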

\begin{proof}
Fix a subspace $U \leq V = \F_q^n$ of dimension $d$ with $d \neq 0,n$. Since $A$ and $B$ are independent, the preceding lemma gives
\[
  \P(AU \leq U, \ BU \leq U) \leq q^{-2d(n-d)}.
\]
There are 
\[\binom{n}{d}_q
  =
 q^{d(n-d)}
  \prod_{i=0}^{d-1}
  \frac{1-q^{i-n}}{1-q^{i-d}} 
  \leq 4 q^{d(n-d)} \]
$d$-dimensional subspaces of $\F_q^n$.  Summing over all subspaces and using $d(n-d) \geq n-1$, the probability of a common invariant subspace is at most
\[
  \sum_{d=1}^{n-1} \binom{n}{d}_q q^{-2d(n-d)}
  \leq 4 \sum_{d=1}^{n-1} q^{-d(n-d)}
  \leq 4 n q^{-(n-1)}. \qedhere
\]
\end{proof}

\subsection{The irreducible but not absolutely irreducible case}

We now turn to the second case, where the structure of the endomorphism ring of an irreducible module produces a much smaller probability.

\begin{proposition} \label{prop:irred_not_abs_rare}
Let $A, B \in \slfrak_n(\F_q)$ be uniformly random. The probability that $\mathcal A = \F_q \langle A, B \rangle$ acts irreducibly on $\F_q^n$ but not absolutely irreducibly is at most $4 n q^{2 - n^2/2}$.
\end{proposition}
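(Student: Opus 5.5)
Suppose $\mathcal A$ acts irreducibly on $V=\F_q^n$ but not absolutely irreducibly. By Schur's lemma and Wedderburn's little theorem, $\E = \End_{\mathcal A}(V)$ is a finite field extension of $\F_q$ of some degree $e>1$. Since $V$ is irreducible, it is an $\E$-vector space of dimension $n/e$. By Jacobson's density theorem, $\mathcal A = \End_{\E}(V)$. Failure of absolute irreducibility is exactly the statement $e>1$, since $\overline{\F}_q\otimes V$ splits into $e$ Galois-conjugate summands. So $e$ is a divisor of $n$ with $e\ge 2$. The key consequence is that both $A$ and $B$ commute with a fixed subfield $\E\subseteq \End_{\F_q}(V)$, that is, $A,B\in C(\E)=\End_\E(V)\cong \Mat_{n/e}(\F_{q^e})$. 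Equivalently, they commute with a generator $c$ of $\E$, which is an element of $\Mat_n(\F_q)$ with irreducible characteristic polynomial of degree $e$ raised to the power $n/e$.

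The plan is a union bound over the possible subfields $\E$. Fix $e\mid n$ with $e\ge2$. First I will count the subalgebras of $\Mat_n(\F_q)$ isomorphic to $\F_{q^e}$ and acting with $V$ free of rank $n/e$. Such an embedding is determined by a $\GL_n(\F_q)$-conjugacy class, and all such embeddings are conjugate. Hence their number is at most
\[
\abs{\GL_n(\F_q)}/\abs{\GL_{n/e}(\F_{q^e})}\le q^{n^2-n^2/e}\cdot 4,
\]
using the crude bounds $\abs{\GL_n(\F_q)}\le q^{n^2}$ and $\abs{\GL_{n/e}(\F_{q^e})}\ge q^{n^2/e}/4$. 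Second, for a fixed such $\E$, the centralizer $C(\E)$ has $\F_q$-dimension $n^2/e$. Its intersection with $\slfrak_n(\F_q)$ has dimension at most $n^2/e$, so the probability that a uniformly random $A\in\slfrak_n(\F_q)$ lies in it is at most $q^{n^2/e-(n^2-1)}$. If the trace functional is nonzero on $C(\E)$ one gains an extra factor of $q$, but the crude bound suffices. By independence, $\P(A,B\in C(\E))\le q^{2(1+n^2/e-n^2)}$.

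Combining the two steps, the contribution of a given $e$ is at most
\[
4q^{\,n^2-n^2/e}\cdot q^{\,2+2n^2/e-2n^2}=4q^{\,2-n^2+n^2/e}\le 4q^{\,2-n^2/2},
\]
since $e\ge2$. Summing over the at most $n$ divisors $e$ gives $4nq^{2-n^2/2}$.

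The main obstacle is the middle step: justifying cleanly that the failure of absolute irreducibility forces $A$ and $B$ into the centralizer of a subfield $\E\cong\F_{q^e}$ with $e\ge2$, and bounding the number of such subfields. For the structure, I would argue as follows. The commutant $\End_{\mathcal A}(V)$ is a finite division algebra, hence a field $\E$, and $\E\otimes_{\F_q}\overline{\F}_q\cong\overline{\F}_q^{\,e}$ splits $\overline{\F}_q\otimes V$ into $e$ nonzero invariant pieces, so absolute irreducibility fails precisely when $e>1$. For the count, I would use that $\F_{q^e}$-module structures on $\F_q^n$ are all isomorphic to $\F_{q^e}^{n/e}$. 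The embeddings therefore form a single orbit under $\GL_n(\F_q)$ with stabilizer $\GL_{n/e}(\F_{q^e})$, which gives the orbit bound used above. The product bound $\prod(1-q^{-i})\ge 1/4$ handles the constant.
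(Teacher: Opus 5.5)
Your proposal is correct and follows the paper's proof essentially step for step: Schur plus Wedderburn give a field commutant $\F_{q^e}$, Jacobson density rules out $e=1$, orbit--stabilizer bounds the number of embedded copies by $4q^{n^2(1-1/e)}$, the probability that both matrices land in a fixed centralizer is at most $q^{2-2n^2(1-1/e)}$, and a union bound over the at most $n$ divisors finishes (for conjugacy of the embeddings you use uniqueness of the $\F_{q^e}$-vector space structure, where the paper cites Skolem--Noether). One small point: the direction you actually need, namely that failure of absolute irreducibility forces $e\ge 2$, follows from your density statement $\mathcal A=\End_{\E}(V)$ (for $e=1$ this gives $\mathcal A=\Mat_n(\F_q)$), whereas the Galois-conjugate splitting you cite only proves the converse.
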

\begin{proof}
Suppose that $V = \F_q^n$ is irreducible as an $\mathcal A$-module. By Schur's lemma, the centralizer
\[
  D = \End_{\mathcal A}(V) = C_{\Mat_n(\F_q)}(A, B)
\]
is a division ring. Since $D$ is finite, it follows from Wedderburn's little theorem that it is in fact a finite field, thus isomorphic to $\F_{\ell}$ with $\ell = q^k$ for some $k \geq 1$. This field as well as its embedding into $\End_{\F_q}(V)$ depends on the random matrices $A,B$. Note that $D$ acts faithfully and $\F_q$-linearly on $V$, so we may view $V$ as a vector space over $D$. In particular, $k$ must divide $n$, so let us write $m = n/k$. We inspect all possibilities for $D$ and its embedding into $\End_{\F_q}(V)$, and then do a union bound for the event that $A, B \in C_{\Mat_n(\F_q)}(D)$.

If $k = 1$, then $D \cong \F_q$, and the Jacobson density theorem implies that $\mathcal A = \End_D(V) = \Mat_n(\F_q)$. After extending scalars, the algebra $\overline{\F}_q \otimes \mathcal A = \Mat_n(\overline{\F}_q)$ clearly acts irreducibly on $\overline{\F}_q \otimes V$. Therefore in the irreducible but not absolutely irreducible case, we must have $k \geq 2$.

Let us count the number of possible embeddings of $\F_{\ell}$ into $\Mat_n(\F_q)$. For a fixed $k$ that divides $n$, all such embeddings are conjugate under $\GL_n(\F_q)$ by the Skolem--Noether theorem. The stabilizer under conjugation of a fixed copy $\F_{\ell}$ contains
\[
  C_{\GL_n(\F_q)}(\F_{\ell}) = C_{\Mat_n(\F_q)}(\F_{\ell})^\times \cong \GL_m(\F_{\ell}).
\]
The number of embedded copies of $\F_{\ell}$ is thus
\[
  \abs{\GL_n(\F_q) : \operatorname{Stab}(\F_{\ell})}
  \leq \frac{\abs{\GL_n(\F_q)}}{\abs{\GL_m(\F_{\ell})}}
  \leq \frac{q^{n^2}}{q^{n^2/k} / 4}
  = 4 q^{n^2(1 - 1/k)}.
\]

Now, for any fixed embedded copy of $\F_{\ell}$ into $\Mat_n(\F_q)$, its centralizer is precisely the $\F_{q}$-endomorphisms of $\F_q^n$ that are in fact $\F_{\ell}$-linear, and can thus be viewed as $\F_{\ell}$-linear endomorphisms of $(\F_{\ell})^m$. Hence
\[
  C_{\Mat_n(\F_q)}(\F_{\ell}) \cong \Mat_m(\F_{\ell}),
  \qquad
  \dim_{\F_q} C_{\Mat_n(\F_q)}(\F_{\ell}) = k m^2 = n^2 / k.
\]
For uniformly random $A,B$, we obtain the crude bound
\[
  \P\left( A, B \in C_{\Mat_n(\F_q)}(\F_{\ell}) \right)
  \leq
  \left( 
    \frac{|C_{\Mat_n(\F_q)}(\F_\ell)|}{\abs{\slfrak_n(\F_q)}}
  \right)^2
  = q^{-2n^2(1 - 1/k) + 2}.
\]

We can now assemble the union bound. Given any pair $A, B$ that acts irreducibly on $\F_q^n$ but not absolutely irreducibly, its centralizer $D \cong \F_{\ell}$ with $k \geq 2$ is embedded into $\Mat_n(\F_q)$ as one of the copies counted above, and $A, B \in C_{\Mat_n(\F_q)}(D)$ holds by definition. Hence the event is contained in
\[
\bigcup_{\substack{k \geq 2 \\ k \mid n}} 
\bigcup_{\substack{\ell = q^k \\ \F_\ell \leq \Mat_n(\F_q)}}
\{ A, B \in C_{\Mat_n(\F_q)}(\F_\ell) \},
\]
and so its probability is at most
\[
  \sum_{\substack{k \mid n \\ k \geq 2}} 
  4 q^{n^2(1 - 1/k)}
  q^{-2n^2(1 - 1/k) + 2}
  = 4 q^2 \sum_{\substack{k \mid n \\ k \geq 2}} q^{-n^2(1 - 1/k)}
  \leq 4 n q^{2 - n^2/2}. \qedhere
\]
\end{proof}

Combining the two cases gives the main result of this section.

\begin{theorem} \label{thm:absolutely_irreducible}
Let $A, B \in \slfrak_n(\F_q)$ be uniformly random. Then the Lie algebra generated by $A, B$ acts absolutely irreducibly on $\F_q^n$ with probability $1 - O(n q^{1-n})$. 
\end{theorem}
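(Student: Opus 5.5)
The plan is to combine the two cases already isolated in this section. As noted before the dichotomy, $L = \langle A, B\rangle$ acts absolutely irreducibly on $V = \F_q^n$ exactly when the unital associative algebra $\mathcal A = \F_q\langle A,B\rangle$ does. The reason is that a subspace of $\E \otimes V$, for any extension $\E/\F_q$, is invariant under $A$ and $B$ if and only if it is invariant under every noncommutative polynomial in them. So the failure event for $L$ equals the failure event for $\mathcal A$. This event is the disjoint union of two cases: either $\mathcal A$ is reducible over $\F_q$, or $\mathcal A$ is irreducible over $\F_q$ but not absolutely irreducible.

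A union bound then gives
\[
\P(\text{$L$ not absolutely irreducible}) \leq 4nq^{1-n} + 4nq^{2-n^2/2}.
\]
The first term is the bound of \Cref{prop:rational_reducible_rare} on the probability that $A,B$ share a proper nontrivial invariant subspace over $\F_q$. The second term is the bound of \Cref{prop:irred_not_abs_rare} on the irreducible but not absolutely irreducible case.

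It remains to absorb the second term into $O(nq^{1-n})$. For $n \geq 2$ we have $n^2/2 - 2 \geq n - 1 - 1$; more precisely, $n^2/2 - n - 1 \geq -1$, so $q^{2-n^2/2} \leq q \cdot q^{1-n}$. To get a bound uniform in $q$, I would check more carefully: for $n \geq 3$ we have $n^2/2 - 2 \geq n-1$, so $q^{2-n^2/2} \leq q^{1-n}$. The case $n = 2$ is a bounded exceptional case, since for $n=2$ the bound $8q^{-1}$ is itself $O(nq^{1-n})$ trivially. Either way the total is $O(nq^{1-n})$ with an absolute implied constant, which proves the theorem.

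The only point needing care is the disjointness and exhaustiveness of the two cases together with the reduction from $L$ to $\mathcal A$. The reduction has already been justified in the text. The case split is exhaustive because failing absolute irreducibility either already happens over $\F_q$ or does not. There is no real obstacle beyond this bookkeeping of exponents in small $n$.
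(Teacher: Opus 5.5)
Your route is the paper's: its proof of this theorem is exactly the union bound of \Cref{prop:rational_reducible_rare} and \Cref{prop:irred_not_abs_rare}. Your absorption for $n \geq 3$, using $n^2/2 - 2 \geq n-1$, is correct.

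The step that fails is $n = 2$. There the second term is $4nq^{2-n^2/2} = 8$, not $8q^{-1}$. So the two propositions as stated give only a vacuous bound, and they do not yield $O(nq^{1-n})$ uniformly in $q$. The paper's one-line proof passes over the same point. This is harmless downstream, since \Cref{thm:main-quantitative} only uses $n \geq n_0$. Still, your claim that $n=2$ is covered ``trivially'' is wrong.

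If you want $n=2$ as well, sharpen \Cref{prop:irred_not_abs_rare} as follows.
\begin{itemize}
\item For an embedded copy of $\F_\ell$ with centralizer $C \cong \Mat_m(\F_\ell)$, the $\F_q$-trace of an element of $C$ is $\tr_{\F_\ell/\F_q}$ applied to its $\F_\ell$-trace. This map is surjective, so $\abs{C \cap \slfrak_n(\F_q)} = \abs{C}/q$.
\item Hence $\P(A, B \in C) = q^{-2n^2(1-1/k)}$, which removes the factor $q^2$ from the paper's crude bound.
\item The case then contributes at most $4nq^{-n^2/2}$. Since $n^2/2 \geq n-1$ for every $n$, this is at most $4nq^{1-n}$, and the total failure probability is at most $8nq^{1-n}$ for all $n \geq 2$.
\end{itemize}
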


\section{Largish block and irreducibility give everything}
\label{sec:largish-block-and-irreducibility-give-everything}

We have now reached the point where a random pair in $\slfrak_n(\F_q)$ generates, after a scalar extension to $\overline{\F}_q$, a Lie subalgebra of $\slfrak_n(\overline{\F}_q)$ containing an upper left block $\slfrak_m(\overline{\F}_q)$ with $m\to\infty$, with high probability and after a change of basis. Moreover, the Lie subalgebra acts absolutely irreducibly on the vector space $(\overline{\F}_q)^{n}$ with high probability. We now show that these two features already force the subalgebra to be the whole $\slfrak_n(\overline{\F}_q)$.

\subsection{Elementary lemmas}

In the proof, we will require the following facts.

\begin{lemma}\label{lem:isotypic-split}
Let $M=\bigoplus_{i=1}^{r}M_i$ be a finite dimensional module over an associative or Lie algebra, and suppose that $M_i$ and $M_j$ have no common composition factor for $i\neq j$. Then every submodule $N \leq M$ splits as $N=\bigoplus_{i=1}^{r} (N\cap M_i)$.
\end{lemma}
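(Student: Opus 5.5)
We argue by induction on $r$, reducing everything to the case $r=2$. For $r=2$, write $M = M_1 \oplus M_2$ with projections $\pi_1,\pi_2$. Both projections are module homomorphisms, since the decomposition is a direct sum of submodules; this holds equally for an associative or a Lie algebra. Let $N \leq M$ be a submodule.

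Consider $\pi_1|_N \colon N \to M_1$. Its kernel is $N \cap M_2$, so the image $\pi_1(N) \leq M_1$ is isomorphic to $N/(N\cap M_2)$. Similarly $\pi_2(N) \cong N/(N \cap M_1)$.

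Next consider the map
\[
N \cap M_2 \longrightarrow \pi_2(N)/\pi_2(N\cap M_1)...
\]
A cleaner route uses $\pi_1$ and the quotient $N/\bigl((N\cap M_1)\oplus(N\cap M_2)\bigr)$. The map $N \to \pi_1(N)/(N \cap M_1)$ is surjective and kills $(N\cap M_1)\oplus(N\cap M_2)$. It is also injective modulo that sum: if $\pi_1(x) \in N \cap M_1$, then $x - \pi_1(x) \in N \cap M_2$, so $x$ lies in the sum.

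Hence
\[
Q := N/\bigl((N\cap M_1)\oplus(N\cap M_2)\bigr) \cong \pi_1(N)/(N\cap M_1),
\]
which is a subquotient of $M_1$. Symmetrically, $Q$ is also isomorphic to a subquotient of $M_2$.

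If $Q \neq 0$, then $Q$ is finite dimensional and has a simple composition factor $S$. By Jordan--Hölder, applied to a composition series of $M_1$ refining $N\cap M_1 \leq \pi_1(N) \leq M_1$, the module $S$ is a composition factor of $M_1$. By the same argument it is a composition factor of $M_2$. This contradicts the hypothesis. Therefore $Q = 0$, that is, $N = (N\cap M_1)\oplus(N\cap M_2)$.

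For general $r$, set $M' = \bigoplus_{i\geq 2} M_i$. Every composition factor of $M'$ is a composition factor of some $M_i$ with $i\geq 2$, by Jordan--Hölder along the filtration by partial sums. Hence $M_1$ and $M'$ share no composition factor, and the case $r=2$ gives $N = (N\cap M_1)\oplus(N\cap M')$. Applying induction to the submodule $N\cap M' \leq M'$ splits it as $\bigoplus_{i\ge2}(N\cap M_i)$, which gives the claim.

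The only delicate step is checking that $Q$ really is a subquotient of both summands, as opposed to merely a quotient of $N$. The injectivity computation above handles this. Everything else is standard Jordan--Hölder bookkeeping, which holds for modules over Lie algebras just as for associative ones, since in both cases these are modules over an associative algebra (the universal enveloping algebra in the Lie case).
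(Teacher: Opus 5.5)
Your proof is correct and follows the paper's argument: reduce to $r=2$, use $\ker(\pi_1|_N)=N\cap M_2$ to show that $Q=N/\bigl((N\cap M_1)\oplus(N\cap M_2)\bigr)$ is a subquotient of both $M_1$ and $M_2$, and conclude $Q=0$ by Jordan--H\"older. Your explicit isomorphism $Q\cong\pi_1(N)/(N\cap M_1)$ and the spelled-out induction step only add detail; you should delete the abandoned fragment about the map $N\cap M_2\to\pi_2(N)/\pi_2(N\cap M_1)$, which plays no role.
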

\begin{proof}
By induction it suffices to treat the case $r=2$. So let $M = M_1\oplus M_2$ with projections $\pi_i\colon M\to M_i$. Put $N'=(N\cap M_1)\oplus(N\cap M_2)\leq N$ and $Q=N/N'$. Note that $N/(N\cap M_2)$ surjects onto $Q$. On the other hand, since $\ker(\pi_1|_N)=N\cap M_2$, we have $N/(N\cap M_2)\cong\pi_1(N)\leq M_1$. Thus every composition factor of $Q$ is one of $M_1$. By symmetry every composition factor of $Q$ is also one of $M_2$. As $M_1$ and $M_2$ share none, it follows that $Q=0$ and $N=N'$.
\end{proof}

\begin{lemma}\label{lem:not-selfdual}
Let $M$ be a vector space over a field $\F$ with $\dim M \geq 3$. Then the natural
$\slfrak(M)$-module $M$ and its dual $M^{*}$ are not isomorphic.
\end{lemma}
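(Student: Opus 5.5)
We argue by contradiction: suppose there is an $\slfrak(M)$-module isomorphism $\varphi \colon M \to M^{*}$, and we aim to show that this forces $\dim M \leq 2$. The plan is to compare a single invariant of a well-chosen element on both sides. On $M^{*}$, an element $X \in \slfrak(M)$ acts by $-X^{\mathsf T}$ (the negative transpose in the dual basis). An isomorphism of modules intertwines the two actions, so $X$ and $-X^{\mathsf T}$ are conjugate as matrices. Since any matrix is conjugate to its transpose, this means $X$ and $-X$ are conjugate for every $X \in \slfrak(M)$. Equivalently, the characteristic polynomials satisfy $\chi_X(t) = \chi_{-X}(t) = (-1)^{n}\chi_X(-t)$, where $n = \dim M$.

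Next we pick an $X$ that violates this. Over $\F$ with $n \geq 3$, we take $X = E_{11} - E_{22} + \dots$; a cleaner choice is a diagonal matrix whose eigenvalue multiset is not symmetric under negation. Take $X = \diag(1,1,-2,0,\dots,0)$, which has trace zero. Its spectrum $\{1,1,-2,0,\dots\}$ is mapped by negation to $\{-1,-1,2,0,\dots\}$. These multisets agree only if $\{1,1,-2\} = \{-1,-1,2\}$ as multisets in $\F$. That would require $1=-1$, i.e.\ characteristic $2$. In characteristic $\neq 2$ this gives the contradiction.

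The main obstacle is characteristic $2$ (and, to a lesser degree, characteristic $3$, where $-2=1$), because there $-X$ has the same spectrum as $X$ and the characteristic polynomial argument collapses. We avoid eigenvalue comparisons by working with nilpotent or rank data instead. The approach that works uniformly is to look at the invariant subspaces. Consider the subalgebra $\mathfrak p$ stabilizing a line $\ell \leq M$, i.e.\ the $X$ with $X\ell \leq \ell$. In $M$, $\mathfrak p$ fixes a line (namely $\ell$). In $M^{*}$, $\mathfrak p$ stabilizes exactly the annihilator $\ell^{\perp}$, of dimension $n-1$, together with $0$ and $M^{*}$. The key check is that the $\mathfrak p$-invariant subspaces of $M$ are only $0,\ell,M$: using the elementary matrices $E_{ij}$ in $\mathfrak p$ (all those with $j \neq 1$ or $i=1$, taking $\ell = \langle e_1\rangle$), any invariant subspace containing a vector outside $\ell$ must contain all $e_j$ with $j\ge 2$ and hence all of $M$. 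Dually, the invariant subspaces of $M^*$ are only $0,\ell^\perp,M^*$. An isomorphism $\varphi$ would send the unique proper nonzero $\mathfrak p$-invariant subspace to the corresponding one, forcing $1 = n-1$, i.e.\ $n=2$, which contradicts $n\geq 3$. This argument is characteristic-free, so we would present it as the main proof.
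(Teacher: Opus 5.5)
Your parabolic argument is correct, but it takes a different route from the paper.

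\textbf{The paper's route.} The paper turns an isomorphism $M\to M^{*}$ into a nondegenerate $\slfrak(M)$-invariant bilinear form with Gram matrix $G$, i.e.\ $X^{\top}G=-GX$ for all $X$. It then substitutes $X=E_{ij}$. Comparing entries gives $G_{il}=0$ for all $l\neq j$. Two distinct choices of $j\neq i$, available because $\dim M\geq 3$, kill the whole $i$-th row, so $G=0$.

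\textbf{Your route.} You restrict to the stabilizer $\mathfrak p$ of a line $\ell$ and compare the lattices of invariant subspaces. Your ``dually'' step is justified by the annihilator correspondence: $(Xf)(w)=-f(Xw)$ shows that $W\mapsto W^{\perp}$ is an inclusion-reversing bijection between $\mathfrak p$-stable subspaces of $M$ and of $M^{*}$. Hence the $\mathfrak p$-stable subspaces of $M^{*}$ are exactly $0,\ell^{\perp},M^{*}$. For $n\geq 3$ none of these is a line, while $\ell$ is a $\mathfrak p$-stable line in $M$.

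\textbf{Comparison.} The paper's computation is shorter and entirely explicit. Yours is more conceptual and needs no Gram matrix. It also shows exactly why $n=2$ is exceptional: a line and a hyperplane then have the same dimension, and indeed $M\cong M^{*}$ for $\slfrak_2$.

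\textbf{The discarded detour.} Your characteristic-polynomial argument is unnecessary, and its remark about characteristic $3$ is inaccurate. There $\diag(1,1,-2)=\diag(1,1,1)$ and its negative $\diag(2,2,2)$ still have different spectra, so that argument in fact works in every characteristic other than $2$. Since you present the parabolic argument as the proof, this does not affect correctness.
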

\begin{proof}
An element $X \in \slfrak(M)$ acts on $M^*$ by $Xf = - f \circ X$ for $f \in M^*$. Therefore an isomorphism $\phi\colon M\to M^{*}$ of $\slfrak(M)$-modules is the same
as a nondegenerate bilinear form $\langle v,w\rangle = \phi(v)(w)$ on $M$ that is
$\slfrak(M)$-invariant in the sense
$\langle Xv,w\rangle=-\langle v,Xw\rangle$ for all $X\in\slfrak(M)$. Let us fix a basis of $M$ and write $\langle v,w\rangle=v^\top Gw$ with $G\in \GL(M)$. Invariance is then equivalent to $X^\top G=-GX$ for all $X\in\slfrak(M)$. Apply this with $X=E_{ij}$ with $i\neq j$. Comparing the $(j,l)$ entries of $E_{ji}G=-GE_{ij}$ gives $G_{il} = 0$ for all $l \neq j$. Since this holds for every $j\neq i$ and $\dim M\geq 3$, there are two distinct such indices $j$, and together they force the whole $i$-th row of $G$ to vanish. As $i$ was arbitrary, we obtain $G=0$. Thus $M \not\cong M^*$.
\end{proof}

\begin{lemma}\label{lem:schur-tensor}
Let $L$ be a Lie algebra over an algebraically closed field $\F$, let $U$ be an irreducible finite dimensional $L$-module. Let $N$ be a finite dimensional vector space over $\F$, viewed as a trivial $L$-module. Then every $L$-submodule of $U\otimes N$ has the form $U\otimes S$ for a unique subspace $S\leq N$. The same holds for $N \otimes U$.
\end{lemma}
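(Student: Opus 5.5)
The plan is to reduce the statement to the density theorem over an algebraically closed field. Since $U$ is irreducible and finite dimensional and $\F$ is algebraically closed, Schur's lemma gives $\End_L(U)=\F$. Burnside's theorem then shows that the associative image $\mathcal A \leq \End_{\F}(U)$ of the universal enveloping algebra of $L$ is all of $\End_{\F}(U)$. Indeed, $U$ is an irreducible module for the unital associative subalgebra generated by the image of $L$, and $L$-submodules and $\mathcal A$-submodules of $U\otimes N$ coincide. So it suffices to treat $U\otimes N$ as a module over $\End_{\F}(U)\otimes 1$.

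\textbf{Choosing $S$.} Fix a basis $n_1,\dots,n_r$ of $N$, so that $U\otimes N=\bigoplus_k U\otimes n_k$. Let $W\leq U\otimes N$ be a submodule and set
\[
S=\{\,s\in N \mid U\otimes s\subseteq W\,\}.
\]
This is a subspace, and $U\otimes S\subseteq W$. For the reverse inclusion, take $w=\sum_k u_k\otimes n_k\in W$. It suffices to show that whenever $w=\sum_j u'_j\otimes s_j$ with the $u'_j$ linearly independent, each $s_j\in S$.

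\textbf{Reverse inclusion.} Rewrite $w=\sum_j u'_j\otimes s_j$ with $u'_1,\dots,u'_t$ linearly independent in $U$. For any $x\in U$, density provides $T\in\End_{\F}(U)$ with $Tu'_1=x$ and $Tu'_j=0$ for $j\geq 2$. Applying $T\otimes 1$ gives $x\otimes s_1\in W$, so $s_1\in S$, and likewise every $s_j\in S$. Hence $w\in U\otimes S$, and therefore $W=U\otimes S$.

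\textbf{Uniqueness and the other factor order.} For uniqueness, observe that $U\otimes S=U\otimes S'$ forces $S=S'$: choose a nonzero linear functional on $U$ and contract along it to recover $S$ from $U\otimes S$. The case $N\otimes U$ follows from the flip isomorphism $N\otimes U\cong U\otimes N$, which is $L$-equivariant.

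\textbf{Main obstacle.} The only genuine input is identifying the image of $L$ with all of $\End(U)$. This relies on algebraic closedness via Schur's lemma together with Jacobson density or Burnside's theorem, applied to the associative algebra generated by $L$ rather than to $L$ itself. This is the same passage to the enveloping associative algebra already used in \Cref{sec:absolute-irreducibility}.
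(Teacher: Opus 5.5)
Your proof is correct, but it takes a different route from the paper's.

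\textbf{The paper's route.} The paper never identifies the image of $L$ in $\End(U)$. It uses Schur's lemma only to show that $n\mapsto(u\mapsto u\otimes n)$ is an isomorphism $N\to\hom_L(U,U\otimes N)$. From this it deduces that the simple submodules of the $U$-isotypic semisimple module $U\otimes N$ are exactly the subspaces $U\otimes n$. It then concludes because any submodule $W$ is the sum of its simple submodules, and each of these is some $U\otimes n$ with $n\in S$.

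\textbf{Your route.} You strengthen Schur to Burnside (or Jacobson density), so that all of $\End_\F(U)\otimes 1$ preserves submodules of $U\otimes N$. You then extract the coefficients $s_j$ of an arbitrary $w\in W$ one at a time, using operators $T\otimes 1$ that kill all but one of the linearly independent left tensor factors.

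\textbf{Comparison.} The paper's argument needs less input: only $\End_L(U)=\F$ plus standard facts about semisimple modules. Yours is elementwise and self-contained once density is granted. It also avoids the semisimplicity bookkeeping, namely that every simple submodule of $U\otimes N$ is isomorphic to $U$ and that $W$ is a sum of simple submodules. Your contraction along a nonzero functional on $U$ proves uniqueness more explicitly than the paper's one-line remark that $S$ ``can be recovered'' from $W$. For finite dimensional $U$ the two inputs are equivalent by the density theorem, so neither argument is more general than the other.
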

\begin{proof}
Choose a basis $\mathcal N$ of $N$, so that we may identify $U\otimes N\cong \bigoplus_{n \in \mathcal N} U \otimes n$, which is a semisimple $U$-isotypic $L$-module. Consider the linear map
\[
N\to \hom_{L}(U, U\otimes N),\qquad
n\to (u\mapsto u\otimes n).
\]
We claim this is an isomorphism of vector spaces. It is clearly injective. Any $L$-morphism from $U$ to $U\otimes N$ is a tuple of endomorphisms of $U$, each of which is a scalar by irreducibility and Schur's lemma, and so the $L$-morphism is of the form $u\mapsto u\otimes n$ for some $n \in N$. Thus the above map is also surjective. Consequently, the simple submodules of $U\otimes N$ are exactly the subspaces $U\otimes n$ with $n\in N$, $n\neq0$.

Now let $W\leq U\otimes N$ be a submodule and set
\begin{equation} \label{eq:definition_of_D}
S=\{ n\in N \mid U\otimes n\leq W \},
\end{equation}
a subspace of $N$. Then $U\otimes S\leq W$. For the converse inclusion, note that $W$ is a submodule of the direct sum of simple modules $U \otimes N$, so it is itself a sum of its simple submodules. Each of these is some $U\otimes n\leq W$ with $n\in S$, and hence $W\leq U\otimes S$. Note also that the subspace $S$ can be recovered from $W$ by \cref{eq:definition_of_D}, giving uniqueness.
\end{proof}

\subsection{A block with irreducibility gives the full Lie algebra}

\begin{theorem}\label{thm:block-irred}
Let $V=M\oplus N$ be a finite dimensional vector space over an algebraically closed field $\F$ with $\dim M\geq 3$. Let $L\leq\slfrak(V)$ be a Lie subalgebra acting irreducibly on $V$. If $\slfrak(M)\leq L$, then $L=\slfrak(V)$.
\end{theorem}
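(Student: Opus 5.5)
The plan is to decompose $\slfrak(V)$ as a module over $\slfrak(M)$ and use \Cref{lem:isotypic-split} to split $L$ into pieces. Then \Cref{lem:schur-tensor} and irreducibility force the off-diagonal pieces to be everything, and brackets of those generate $\slfrak(V)$.

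\textbf{Step 1: the $\slfrak(M)$-module decomposition.} Relative to $V=M\oplus N$ we have
\[
\glfrak(V)=\bigl(\End(M)\oplus\End(N)\bigr)\oplus\hom(N,M)\oplus\hom(M,N).
\]
Since $\slfrak(M)\leq L$, the algebra $L$ is stable under $\ad\slfrak(M)$, so $L$ is an $\slfrak(M)$-submodule of $\glfrak(V)$. The three summands are modules for this action:
\begin{itemize}
\item $\hom(N,M)\cong M\otimes N^{*}$, all of whose composition factors are $M$;
\item $\hom(M,N)\cong N\otimes M^{*}$, all of whose composition factors are $M^{*}$;
\item $\End(M)\oplus\End(N)\cong (M\otimes M^*)\oplus(\text{trivial})$.
\end{itemize}

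\textbf{Step 2: these summands share no composition factor.} By \Cref{lem:not-selfdual}, $M\not\cong M^{*}$ since $\dim M\geq 3$. It remains to show that neither $M$ nor $M^*$ is a composition factor of $M\otimes M^{*}$ or of a trivial module. I would use weights for the diagonal torus $\mathfrak h$ of $\slfrak(M)$, which acts semisimply on every module here.
\begin{itemize}
\item The weights of $M\otimes M^*$ are $0$ and $e_i-e_j$ (restricted to $\mathfrak h$).
\item The weights of $M$ are $e_i$, and those of $M^*$ are $-e_i$.
\end{itemize}
An equality $e_i=e_k-e_l$ on trace-zero diagonals would make $e_i-e_k+e_l$ a scalar multiple of $\sum_s e_s$. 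With $m=\dim M\geq 3$, comparing coefficients rules this out in every characteristic, including the case $l=i$ in characteristic $2$. Checking that no coincidence survives when $m=3$ is the only fiddly point of the proof, and I would do it by hand. Hence \Cref{lem:isotypic-split} gives
\[
L=\bigl(L\cap(\End M\oplus\End N)\bigr)\oplus\bigl(L\cap\hom(N,M)\bigr)\oplus\bigl(L\cap\hom(M,N)\bigr).
\]

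\textbf{Step 3: apply \Cref{lem:schur-tensor}.} Since $M$ is irreducible over $\slfrak(M)$, the lemma gives $L\cap\hom(M,N)=\hom(M,T)$ for some subspace $T\leq N$. I claim $W=M\oplus T$ is $L$-invariant, and check each component of $L$ in turn.
\begin{itemize}
\item $\hom(N,M)$ maps $W$ into $M$.
\item $\hom(M,T)$ maps $W$ into $T$.
\item For a diagonal element $x=x_M+x_N\in L$ and any $\varphi\in\hom(M,T)$, the bracket $[x,\varphi]=x_N\varphi-\varphi x_M$ lies in $L\cap\hom(M,N)=\hom(M,T)$. Since $\varphi x_M$ also has image in $T$, so does $x_N\varphi$. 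As $\varphi$ ranges over $\hom(M,T)$, the images $\varphi(M)$ span $T$, so $x_NT\subseteq T$.
\end{itemize}
Irreducibility of $L$ then forces $W=V$, that is $T=N$ and $\hom(M,N)\leq L$.

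For the other corner I would apply the same argument to the dual module $V^{*}=M^{*}\oplus N^{*}$. There $L$ acts irreducibly via $X\mapsto -X^{\top}$, and the corresponding image of $\slfrak(M)$ is $\slfrak(M^*)$. This yields $\hom(N,M)\leq L$.

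\textbf{Step 4: generate $\slfrak(V)$.} Choose a basis of $V$ adapted to $V=M\oplus N$. Then $L$ contains every elementary matrix $E_{ij}$ with $i\neq j$ and at least one of $i,j$ indexing $M$; this uses $\slfrak(M)\leq L$ together with Steps 3. For $i\neq j$ both indexing $N$, pick $k$ indexing $M$; then $E_{ij}=[E_{ik},E_{kj}]\in L$. Finally $[E_{ij},E_{ji}]=E_{ii}-E_{jj}$ supplies the diagonal, so $L=\slfrak(V)$.

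The main obstacle is Step 2, the composition-factor separation, particularly in small characteristic with $m=3$.
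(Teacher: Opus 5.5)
Your Step 2 does not go through when $\dim M=3$ and $\F$ has characteristic $2$, a case the statement allows. The coincidence you planned to rule out by hand actually occurs. For pairwise distinct $i,k,l$ one has $e_i-e_k+e_l=e_1+e_2+e_3$, which vanishes on $\mathfrak h=\{\diag(t_1,t_2,t_3) : t_1+t_2+t_3=0\}$. Concretely, for $t\in\mathfrak h$ we get $\ad_t E_{23}=(t_2-t_3)E_{23}=t_1E_{23}$. So $E_{23}$ and $E_{32}$ both have weight $e_1$, the same as the first basis vector of $M$, and similarly for $e_2,e_3$. Moreover $-e_i=e_i$, so $M^*$ has the same weights as $M$. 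The $\mathfrak h$-character of $\End(M)$ is therefore three trivial characters plus twice the character of $M$ (equivalently, of $M^*$). No comparison of weights can exclude $M$ or $M^*$ as a composition factor of $\End(M)$. Without that, you cannot invoke \Cref{lem:isotypic-split}, and all of Step 3 depends on it. Your weight argument is correct for $\dim M\geq 4$, and for $\dim M=3$ in characteristic $\neq 2$.

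The repair is the paper's dimension argument. As an $\slfrak(M)$-module, $\End(M)=\glfrak(M)$ has the composition series $0\leq\F\,\mathrm{id}_M\cap\slfrak(M)\leq\slfrak(M)\leq\glfrak(M)$. Its factors are trivial modules and $\pslfrak(M)$, which is simple of dimension at least $(\dim M)^2-2>\dim M$, so neither $M$ nor $M^*$ occurs. In the case $(3,2)$ this is just $\glfrak_3=\slfrak_3\oplus\F\,\mathrm{id}$ with $\slfrak_3$ simple of dimension $8$.

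With Step 2 repaired this way, the rest of your proof is sound and essentially follows the paper. Your check that $M\oplus T$ is invariant is the paper's Step 4. Passing to the irreducible dual $V^*$ is a valid substitute for the paper's invariant subspace $K=\bigcap_{\nu\in T}\ker\nu$: there $L$ acts by $X\mapsto -X^{\top}$, $\slfrak(M)$ becomes $\slfrak(M^*)$, and $\hom(M^*,N^*)$ corresponds to $\hom(N,M)$. Your elementary-matrix finish replaces the paper's rank-one brackets.
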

\begin{proof}
Asssume that $N \neq 0$, otherwise there is nothing to prove. Consider $\End(V)$ as a module over $\slfrak(M)$ via the adjoint representation. Since $\slfrak(M)\leq L$, we can then view $L$ as an $\slfrak(M)$-submodule of $\End(V)$.

\smallskip

\noindent\emph{Step 1: the decomposition of $\End(V)$ with respect to $\slfrak(M)$.} 
Let us decompose
\begin{equation} \label{eq:decomposition_of_EndV_slA_module}
\End(V) = \End(M) \oplus \hom(M,N) \oplus \hom(N,M) \oplus \End(N),
\end{equation}
where $\hom(M,N)$ consists of the endomorphisms of $V$ with $N$ in the kernel and image in $N$, and similarly for the other blocks. These submodules can be understood in terms of the dual spaces $M^*$ and $N^*$, identified with the functionals on $V$ vanishing on $N$, respectively $M$. Thus $\slfrak(M)$ acts on $\End(M)=M\otimes M^*$ adjointly, on $\hom(M,N)\cong N\otimes M^*$ through the factor $M^*$, on $\hom(N,M)\cong M\otimes N^*$ through the factor $M$, and trivially on $\End(N)$. In this description, any $X \in \End(V)$ acts by
\begin{equation}\label{eq:rankone-bracket}
[X, v\otimes f]=(Xv)\otimes f - v\otimes(f\circ X)
\qquad
(v \in V, \ f \in V^*),
\end{equation}
where $v \otimes f$ is the rank $1$ endomorphism of $V$ given by $w \mapsto f(w)v$.

\smallskip

\noindent\emph{Step 2: the block decomposition of $L$.} Let $E = \End(M) \oplus \End(N)$, so that
\[
\End(V) = E \oplus \hom(M,N) \oplus \hom(N,M)
\]
as $\slfrak(M)$-modules. We claim these three summands pairwise share no composition factor. We have $\hom(M,N) \cong (M^*)^{\oplus \dim N}$ and $\hom(N,M) \cong M^{\oplus \dim N}$ with $M$ and $M^*$ irreducible. Every composition factor of $\hom(M,N)$ is thus isomorphic to $M^{*}$, and every composition factor of $\hom(N,M)$ is isomorphic to $M$. The block $\End(N)$ is a trivial $\slfrak(M)$-module, while $\End(M)=\glfrak(M)$ is the adjoint module with composition series $0\leq \F \mathrm{id}_M \cap \slfrak(M) \leq \slfrak(M) \leq \glfrak(M)$, whose factors are the trivial module and the simple module $\pslfrak(M)$ of dimension at least $(\dim M)^2 - 2$. Thus neither $M$ nor $M^{*}$ is a composition factor of $E$. Moreover, by Lemma \ref{lem:not-selfdual} we have $M\not\cong M^{*}$. Therefore $E$, $\hom(M,N)$ and $\hom(N,M)$ pairwise share no composition factor. Lemma \ref{lem:isotypic-split}
applied to the submodule $L\leq\End(V)$ then gives
\begin{equation} \label{eq:decomposition_of_L}
L
=
\bigl(L\cap E \bigr) 
\oplus 
\bigl(L\cap\hom(M,N)\bigr) 
\oplus 
\bigl(L\cap\hom(N,M)\bigr).
\end{equation}

\medskip
\noindent\emph{Step 3: the off-diagonal pieces are structured.}
As an $\slfrak(M)$-module, we have $\hom(M,N)\cong N\otimes M^*$ with $\slfrak(M)$ acting through the factor $M^*$. The natural module $M$ is irreducible, and hence so is $M^*$. Now apply Lemma~\ref{lem:schur-tensor} with $U=M^*$ to the submodule $L\cap\hom(M,N)$. Note that $\F n \otimes M^*=\hom(M,\F n)$, hence by \cref{eq:definition_of_D} we obtain
\[
L\cap\hom(M,N)=\hom(M,S),
\qquad
S=\{n\in N \mid \hom(M,\F n)\leq L\} \leq N.
\]
Dually, by taking $U=M$ we obtain
\[
L\cap\hom(N,M)=M\otimes T,
\qquad
T=\{\nu\in N^* \mid M\otimes\nu\leq L\}\leq N^*.
\]
Moreover, we have $S\neq 0$, since otherwise $L\cap\hom(M,N)=0$, so by \cref{eq:decomposition_of_L} every element of $L$ preserves $M$, contradicting irreducibility. Similarly $T \neq 0$.

\smallskip

\noindent\emph{Step 4: the lower-triangular pieces are full columns.} 
Let $e\in L\cap E$, written as $e=e_M+e_N$ with $e_M\in\End(M)$, $e_N\in\End(N)$. For $n\in S$ and $\mu\in M^*$ we have $n\otimes\mu\in\hom(M,\F n)\leq L$, and \cref{eq:rankone-bracket} gives
\[
[e,n\otimes\mu]
=
e_N(n)\otimes\mu - n\otimes(\mu\circ e_M)
\in L\cap\hom(M,N) = \hom(M,S).
\]
The second summand lies in $\hom(M,\F n)\leq\hom(M,S)$ since $n \in S$. It follows that $e_N(n)\otimes\mu\in\hom(M,S)$ for every $\mu$, and hence $e_N(n)\in S$. Thus $e_N(S)\leq S$ for all $e\in L\cap E$. It now follows from \cref{eq:decomposition_of_L} that $M \oplus S$ is $L$-invariant: any element of $L\cap E$ preserves $M$ and $S$, any element of $L \cap \hom(M,S)$ sends $M$ into $S$ and kills $N$, and any element of $L\cap\hom(N,M)$ sends $N$ into $M$ and kills $M$. As $M\oplus S\neq 0$, irreducibility forces $M\oplus S=V$, meaning that $S=N$ and so $\hom(M,N)\leq L$.

\smallskip

\noindent\emph{Step 5: the upper-triangular pieces are full rows.} For $e\in L\cap E $ and $\nu\in T$, \cref{eq:rankone-bracket} applied to $m\otimes\nu\in M\otimes T\leq L$
gives
\[
[e,m\otimes\nu]
=
e_M(m)\otimes\nu - m\otimes(\nu\circ e_N)
\in L\cap\hom(N,M) = M\otimes T,
\]
and as the first term lies in $M\otimes T$ we get $\nu\circ e_N\in T$. Hence
$K=\bigcap_{\nu\in T}\ker\nu\leq N$ is $e_N$-invariant for every
$e\in L\cap E $. As before, it now follows from \cref{eq:decomposition_of_L} that $K$ is $L$-invariant: elements of $L\cap\hom(N,M)=M\otimes T$ kill $K$, elements of $L\cap E $ preserve it, and elements of $\hom(M,N)\leq L$ kill $N\geq K$. Since $K$ is a proper subspace of $V$, irreducibility gives $K=0$, meaning that $T=N^*$ and so $\hom(N,M)\leq L$.

\smallskip

\noindent\emph{Step 6: the diagonal block.} We now have $\slfrak(M),\hom(M,N),\hom(N,M)\leq L$. For $m\in M$, $\mu\in M^*$, $n\in N$, $\nu\in N^*$, both $m\otimes\nu\in\hom(N,M)$ and $n\otimes\mu\in\hom(M,N)$ lie in
$L$, and thus \cref{eq:rankone-bracket} yields
\[
[m\otimes\nu,n\otimes\mu]
=
\nu(n) (m\otimes\mu)-\mu(m)(n\otimes\nu)
\in L,
\]
with $m\otimes\mu\in\End(M)$ and $n\otimes\nu\in\End(N)$. Taking $\mu(m)=1$,
$\nu(n)=0$ gives every rank one traceless operator on $N$, so $\slfrak(N)\leq L$. Taking $\mu(m)=\nu(n)=1$ gives $w=m\otimes\mu-n\otimes\nu\in L$ with
$\tr_M w=1$, $\tr_N w=-1$. Thus $L$ contains the subspaces $\slfrak(M)$, $\slfrak(N)$,
$\hom(M,N)$, $\hom(N,M)$, $\F w$, and so we finally obtain $L=\slfrak(V)$.
\end{proof}

\subsection{The main theorem in odd characteristic}

We are now ready to collect the previous results to deduce the main theorem in odd characteristic. When $n$ tends to infinity, we can extract the following quantitative bound.

\begin{theorem} \label{thm:main-quantitative}
For every $\gamma > 0$ there is a constant $n_0 > 0$ such that the following holds. For all $n \geq n_0$ and all odd prime powers $q$, uniformly random $A, B \in \slfrak_n(\F_q)$ generate the Lie algebra $\slfrak_n(\F_q)$ with probability at least $1 - n^{-\gamma}$.
\end{theorem}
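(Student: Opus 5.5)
Fix $\gamma>0$ and apply \Cref{largish_block} with exponent $\gamma+1$; this supplies constants $C,n_0$ such that, for $n\geq n_0$ and every odd $q$, with probability at least $1-n^{-\gamma-1}$ there exist $P\in\GL_n(\overline{\F}_q)$ and $m>\lceil C\log n\rceil$ such that $P^{-1}\langle A,B\rangle P$ contains the upper-left block $\slfrak_m(\overline{\F}_q)$. By the same argument, $\overline{\F}_q\otimes\langle A,B\rangle$ sits inside $\slfrak_n(\overline{\F}_q)$ as an $\overline{\F}_q$-span whose $P$-conjugate contains this block: every element used in the extraction is an $\overline{\F}_q$-linear combination of elements of $P^{-1}\langle A,B\rangle P$. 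Set $L=P^{-1}(\overline{\F}_q\otimes\langle A,B\rangle)P$, $V=\overline{\F}_q^n$, $M$ the span of the first $m$ basis vectors and $N$ the span of the rest. For $n$ large we have $m\geq 3$. In particular, $\slfrak(M)\leq L$.

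Next apply \Cref{thm:absolutely_irreducible}. With probability $1-O(nq^{1-n})\geq 1-O(n2^{1-n})$, the algebra $\langle A,B\rangle$ acts absolutely irreducibly on $\F_q^n$, so $\overline{\F}_q\otimes\langle A,B\rangle$ acts irreducibly on $\overline{\F}_q^n$. Conjugation by $P$ preserves this, so $L$ is irreducible on $V$. Both events hold simultaneously with probability at least $1-n^{-\gamma-1}-O(n2^{-n})\geq 1-n^{-\gamma}$ once $n_0$ is enlarged; this bound is uniform in $q$ because $q\geq 3$.

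On this intersection, \Cref{thm:block-irred} applied to $V=M\oplus N$ over $\overline{\F}_q$ gives $L=\slfrak_n(\overline{\F}_q)$. Undoing the conjugation gives $\overline{\F}_q\otimes\langle A,B\rangle=\slfrak_n(\overline{\F}_q)$. Comparing dimensions, $\dim_{\F_q}\langle A,B\rangle=n^2-1$, since the $\F_q$-span extends scalars faithfully. Hence $\langle A,B\rangle=\slfrak_n(\F_q)$.

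The main subtlety I expect is the descent step. One must check that the Lie algebra generated over $\overline{\F}_q$ by the conjugated matrices coincides with $P^{-1}(\overline{\F}_q\otimes_{\F_q}\langle A,B\rangle)P$. This holds because brackets are bilinear: the $\overline{\F}_q$-span of an $\F_q$-Lie algebra is closed under brackets, and it is the smallest such space. It must also be checked that the block statement in \Cref{largish_block} is really a statement about this span, which follows from \Cref{dim_adNXY} and \Cref{lie_algebra_contains_B_delta}, since both only take $\overline{\F}_q$-linear combinations. The remaining work is bookkeeping of constants.
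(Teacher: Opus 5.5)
Your proposal is correct and follows essentially the same route as the paper. Like the paper, you apply \Cref{largish_block} with exponent $\gamma+1$ and \Cref{thm:absolutely_irreducible}, intersect the two events, invoke \Cref{thm:block-irred} on $M\oplus N$ with $\dim M=m\geq 3$, and descend via $\dim_{\F_q}\langle A,B\rangle=\dim_{\overline{\F}_q}L=n^2-1$. Your extra care in identifying the $\overline{\F}_q$-Lie algebra generated by the conjugates with $P^{-1}(\overline{\F}_q\otimes\langle A,B\rangle)P$ simply makes explicit what the paper uses implicitly.
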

\begin{proof}
Let us fix $\gamma > 0$ and apply \Cref{largish_block} with $\gamma + 1$ together with \Cref{thm:absolutely_irreducible}. Thus there are constants $C, n_0 > 0$ such that for $n \geq n_0$ a uniformly random pair $A, B \in \slfrak_n(\F_q)$ has, with probability at least $1 - n^{-\gamma - 1} - O(n q^{1-n}) \geq 1 - n^{-\gamma}$, the following two properties:
\begin{enumerate}
  \item there is a $P \in \GL_n(\overline{\F}_q)$ and an $m > \lceil C \log n \rceil$ such that the Lie algebra $L = P^{-1} ( \overline{\F}_q \otimes \langle A, B \rangle ) P \leq \slfrak_n(\overline{\F}_q)$ contains the upper left block $\slfrak_m(\overline{\F}_q)$,
  \item $\overline{\F}_q \otimes \langle A, B \rangle$ acts irreducibly on the natural module.
\end{enumerate}
On this event, $L$ also acts irreducibly. Write $\overline{\F}_q^{n} = M \oplus N$ with $M$ the block of dimension $m \geq 3$. \Cref{thm:block-irred} then gives that $L = \slfrak_n(\overline{\F}_q)$. Hence $\dim_{\F_q} \langle A, B \rangle = \dim_{\overline{\F}_q} L = n^2 - 1$, and so $\langle A, B \rangle = \slfrak_n(\F_q)$.
\end{proof}

\begin{proof}[Proof of \Cref{thm:main}, odd characteristic]
Let $\abs{\slfrak_n(\F_q)} \to \infty$. If $n \to \infty$, then the preceding theorem gives generation with probability $1 - o(1)$. Otherwise $n$ stays bounded, and $\abs{\slfrak_n(\F_q)} \to \infty$ forces $q \to \infty$. As $(n,p) \neq (3,3), (4,2)$, the Lie algebra $\slfrak_n(\F_q)$ is $2$-generated \cite{cantor2024two}, so its generating pairs form a nonempty Zariski open subset of $\slfrak_n \times \slfrak_n$ over $\overline{\F}_q$ defined by equations of degree depending only on $n$, and by the argument of \cite[Proposition 4.5]{barbieri2025diameter} a random pair lies in it with probability $1 - o(1)$ as $q \to \infty$.
\end{proof}

\section{Even characteristic}
\label{sec:even-characteristic}

We now handle the even characteristic case. In the argument so far, the key point where characteristic enters is in exploiting the uniqueness of \emph{differences} $\lambda_i - \lambda_j$ of eigenvalues. This appears both in controlling the characteristic polynomial (\Cref{thm:good-medium-factor}) and in extracting elementary matrices from a random pair (\Cref{prop:extraction}). In characteristic $2$ the uniqueness of these differences degenerates, since $\lambda_i - \lambda_j = \lambda_j - \lambda_i$. In this section we explain the modifications needed to cover $p = 2$. The bulk of the argument is unchanged, since the semisimplification and Vandermonde isolation of \Cref{sec:extraction} (\Cref{dim_adNXY}, \Cref{lie_algebra_contains_B_delta}), the block law of \Cref{lem:block-law}, the absolute irreducibility of \Cref{sec:absolute-irreducibility}, and the structural result of \Cref{sec:largish-block-and-irreducibility-give-everything} never use the characteristic. The key modification is to replace the differences by \emph{sums}.

\subsection{Distinct sums}

In characteristic $2$, the ordered difference multiset $\Diff(f)$ has every element of multiplicity at least $2$, since $\lambda_i - \lambda_j = \lambda_j - \lambda_i$, so the condition that all differences be unique can never hold. We therefore work instead with the multiset of unordered sums
\[
  \Sum(f) = 
  \{  \lambda_i + \lambda_j 
  \mid
  \lambda_i, \lambda_j \in \Roots(f), \ \lambda_i \neq \lambda_j \}.
\]
We say that \emph{all sums are unique} if $\lambda_i + \lambda_j = \lambda_k + \lambda_l$ forces $\{i,j\} = \{k,l\}$. With this replacement, the results of \Cref{sec:differences-of-roots} hold without change with the same proofs (in characteristic $2$ the linearized polynomial $L(\Fr)=\Fr^i+\Fr^j+\Fr^k+\id$ vanishes identically only when $\{i,j\}=\{k, 0\}$). Consequently \Cref{sec:char-poly-of-random-matrices} continues to hold, and it now produces an irreducible factor $f$ of degree $> \lceil C \log n \rceil$ so that all elements of $\Sum(f)$ have multiplicity $1$ in $\Sum(\chi_A)$.

\subsection{Extracting a pair of elementary matrices}

\Cref{lie_algebra_contains_B_delta} likewise needs no change. The essential difference from odd characteristic, however, is that in characteristic $2$, it does not produce an elementary matrix from a unique sum. Instead, we have $\ad_X(E_{ij}) = (\lambda_i + \lambda_j) E_{ij}$, and uniqueness of sums in $\Sum(f)$ implies the distinct values $\lambda_i + \lambda_j$ separate only the \emph{unordered} pairs $\{i,j\}$. The Vandermonde argument therefore isolates symmetric pairs
\[
  S_{\{i,j\}} = Y_{ij} E_{ij} + Y_{ji} E_{ji}
\]
instead of a single elementary matrix.

\subsection{Producing an elementary matrix}

We now show how to break the symmetry and obtain a single elementary matrix from the symmetric pairs. We work on the $m \times m$ block coming from an irreducible factor of degree $m$ with unique sums. By \Cref{lem:block-law}, in a suitable basis the entries satisfy $Y_{ij} = \Fr^i(Z_{j-i})$, where the variables $Z_1, \dots, Z_{m-1} \in \F_{\ell}$, $\ell = q^m$, are independent and uniformly random.

\begin{lemma} \label{lem:char2-seed}
Let $m \geq 7$. Let $i,j,k \in \Z/m\Z$ be distinct, and suppose the six differences $\pm(j-i)$, $\pm(k-j)$, $\pm(i-k)$ are distinct in $\Z/m\Z$. Set
\[
  T_{ijk} = Y_{ij} Y_{jk} Y_{ki} + Y_{ik} Y_{kj} Y_{ji}.
\]
Then $\P(T_{ijk} = 0) \leq 2 \ell^{-1}$, and whenever $T_{ijk} \neq 0$, the Lie algebra generated by $S_{\{ i,j \}}, S_{\{ j,k \}}, S_{\{ i,k \}}$ contains the elementary matrices $E_{ik}$, $E_{ki}$.
\end{lemma}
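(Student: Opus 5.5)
Two separate claims need proof: the probability bound for $T_{ijk}=0$, and the algebraic claim that the three symmetric pairs generate $E_{ik}$ and $E_{ki}$ when $T_{ijk}\neq 0$.

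\emph{The probability bound.} By \Cref{lem:block-law}, $Y_{ab}=\Fr^a(Z_{b-a})$. The six entries occurring in $T_{ijk}$ involve the variables $Z_{j-i},Z_{k-j},Z_{i-k},Z_{k-i},Z_{j-k},Z_{i-j}$. By hypothesis these six indices are distinct, and none is zero since $i,j,k$ are distinct. So the six variables are independent and uniform on $\F_\ell$. Now condition on every variable except $Z_{j-i}$, which appears only in $Y_{ij}$. Then $T_{ijk}=\Fr^i(Z_{j-i})\cdot c+c'$, where $c=Y_{jk}Y_{ki}$ and $c'$ is determined by the conditioned variables. Since $\Fr^i$ is a bijection, $T_{ijk}$ is uniform whenever $c\neq 0$, so in that case $T_{ijk}=0$ with probability $\ell^{-1}$. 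The event $c=0$ means $Z_{k-j}=0$ or $Z_{i-k}=0$, which has probability at most $2\ell^{-1}$. This naive union gives $3\ell^{-1}$. To reach $2\ell^{-1}$, I bound more carefully: $\P(c=0)=1-(1-\ell^{-1})^2\le 2\ell^{-1}-\ell^{-2}$, so
\[\P(T_{ijk}=0)\le \ell^{-1}+\P(c=0)\,(1-\ell^{-1})\le \ell^{-1}+2\ell^{-1}-\dots,\]
which is still not $2\ell^{-1}$. I would therefore also condition on $c'$. If $c=0$, then $T_{ijk}=c'=Y_{ik}Y_{kj}Y_{ji}$, which is nonzero unless one of three further independent variables vanishes. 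This yields roughly $\ell^{-1}+2\ell^{-1}\cdot 3\ell^{-1}\le 2\ell^{-1}$ for $\ell\ge 6$, and $\ell=q^m\ge 2^7$ is certainly large enough.

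\emph{The algebraic claim.} In characteristic $2$ all signs are irrelevant. Write $a=Y_{ij}$, $a'=Y_{ji}$, $b=Y_{jk}$, $b'=Y_{kj}$, $c=Y_{ki}$, $c'=Y_{ik}$. The first bracket is
\[[S_{\{i,j\}},S_{\{j,k\}}]=ab\,E_{ik}+b'a'\,E_{ki}.\]
The off-diagonal products in this bracket vanish because $i\neq k$, so this is exactly the computation to check. It is a combination of $E_{ik}$ and $E_{ki}$ that in general is not proportional to $S_{\{i,k\}}=c'E_{ik}+cE_{ki}$. The determinant of the two coefficient vectors $(ab,a'b')$ and $(c',c)$ is $abc+a'b'c'=T_{ijk}$. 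Hence if $T_{ijk}\neq0$, these two vectors span the full two-dimensional space spanned by $E_{ik}$ and $E_{ki}$, so both elementary matrices lie in the generated algebra. The hypothesis $m\ge7$ is not needed for this step; it only guarantees that triples with six distinct differences exist.

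The main obstacle I expect is making the constant exactly $2\ell^{-1}$ in the probability bound. The algebra is immediate once the bracket is computed, but the crude conditioning above loses a constant and needs the refinement described.
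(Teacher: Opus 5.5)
Your proof is correct. The algebraic half is the paper's argument. You compute $[S_{\{i,j\}},S_{\{j,k\}}]=Y_{ij}Y_{jk}E_{ik}+Y_{ji}Y_{kj}E_{ki}$ and note that, together with $S_{\{i,k\}}$, it spans $\langle E_{ik},E_{ki}\rangle$ exactly when the $2\times 2$ determinant $T_{ijk}$ is nonzero.

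The probability half takes a different route. The paper writes $T_{ijk}=U+V$ with $U=Y_{ij}Y_{jk}Y_{ki}$ and $V=Y_{ik}Y_{kj}Y_{ji}$ independent and identically distributed. It then bounds the collision probability:
\[
\P(U=V)=\P(U=0)^2+\sum_{u\neq 0}\P(U=u)^2\le 9\ell^{-2}+(\ell-1)\ell^{-2}\le 2\ell^{-1}
\qquad (\ell\ge 8).
\]

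You instead use that $T_{ijk}$ is affine in the single variable $Z_{j-i}$, with coefficient $Y_{jk}Y_{ki}$, so it is uniform whenever that coefficient is nonzero. For the degenerate case you use that $Y_{jk}Y_{ki}$ (depending on $Z_{k-j},Z_{i-k}$) is independent of $Y_{ik}Y_{kj}Y_{ji}$ (depending on $Z_{k-i},Z_{j-k},Z_{i-j}$). This gives
\[
\P(T_{ijk}=0)=\P(Y_{jk}Y_{ki}\neq 0)\,\ell^{-1}+\P(Y_{jk}Y_{ki}=0)\,\P(Y_{ik}Y_{kj}Y_{ji}=0)\le \ell^{-1}+6\ell^{-2},
\]
which is at most $2\ell^{-1}$ for $\ell\ge 6$. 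This is a genuine inequality, so you can drop the word ``roughly''; you can likewise delete the abandoned intermediate estimate.

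What each route buys: yours is a Schwartz--Zippel-type argument that works for any polynomial linear in one of its independent uniform variables. The paper's exploits the symmetric structure of $T_{ijk}$ as a sum of two i.i.d.\ cycle products. Both rest on the same input: the six indices are distinct and nonzero, so the six entries are independent and uniform.

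One presentational fix: you use $c,c'$ for $Y_{jk}Y_{ki}$ and the complementary product in the probability part, but for $Y_{ki},Y_{ik}$ in the algebraic part. Rename one pair.
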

\begin{proof}
The only nonzero brackets among the elementary matrices appearing as summands of $S_{\{ i, j \}}$ and $S_{\{ j, k \}}$ are $[E_{ij}, E_{jk}] = E_{ik}$ and $[E_{ji}, E_{kj}] = E_{ki}$, so
\[
  [S_{\{i,j\}}, S_{\{j,k\}}] = Y_{ij} Y_{jk} E_{ik} + Y_{ji} Y_{kj} E_{ki}.
\]
Together with $S_{\{i,k\}} = Y_{ik} E_{ik} + Y_{ki} E_{ki}$, these two elements span the subspace $\langle E_{ik}, E_{ki} \rangle$ if and only if
\[
  \det \begin{pmatrix} Y_{ik} & Y_{ki} \\ Y_{ij} Y_{jk} & Y_{ji} Y_{kj} \end{pmatrix}
  = Y_{ik} Y_{ji} Y_{kj} + Y_{ij} Y_{jk} Y_{ki}
  = T_{ijk} \neq 0.
\]
This proves the second part of the claim. For the probability bound, the six entries appearing in $T_{ijk}$ lie on the six distinct cyclic diagonals $\pm(j-i), \pm(k-j), \pm(i-k)$. Each entry is a Frobenius power of the corresponding $Z$ variable, so the six entries are independent and uniform in $\F_{\ell}$. Write $U = Y_{ij} Y_{jk} Y_{ki}$ and $V = Y_{ik} Y_{kj} Y_{ji}$. These are independent and identically distributed random variables, since each is a product of three independent uniform elements. Hence $\P(U = 0) = 1 - (1 - \ell^{-1})^3 \leq 3 \ell^{-1}$, while for $u \neq 0$ we have $\P(U = u) = (\ell - 1)^2 \ell^{-3} \leq \ell^{-1}$. Therefore
\[
  \P(T_{ijk} = 0) = \P(U = V) = \P(U = 0)^2 + \sum_{u \neq 0} \P(U = u)^2
  \leq \frac{9}{\ell^2} + \frac{\ell - 1}{\ell^2} \leq \frac{2}{\ell},
\]
using $\ell = q^m \geq 2^7$. \qedhere
\end{proof}

Indices as in the lemma clearly exist as long as $m \geq 7$, for example $(i,j,k) = (0,1,3)$ have differences $\pm 1, \pm 2, \pm 3$, and these are distinct in $\Z/m\Z$.

\subsection{Propagating the elementary matrix across the block}

It remains to show that once a single elementary matrix in the $m \times m$ block is available, we can propagate it using symmetric pairs to obtain the whole block. For this we will need that $\pm 1 \in D$, or equivalently $Z_1, Z_{m-1} \neq 0$, where as before
\[
  D = \{ d \in \Z/m\Z \mid Z_d \neq 0, \ d \neq 0 \},
\]
so that for $i \neq j$ we have $Y_{ij} \neq 0$ if and only if $j - i \in D$.

\begin{lemma} \label{lem:char2-propagate}
Suppose $\{ 1, -1 \} \subseteq D$. Then any elementary matrix $E_{i_0 j_0}$ \textup{(}$i_0 \neq j_0$\textup{)} together with the symmetric pairs $\{ S_{\{i,j\}} \mid i \neq j \}$ generates $\slfrak_m(\F_\ell)$.
\end{lemma}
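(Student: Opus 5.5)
Start from $E_{i_0 j_0}$ and bracket it with symmetric pairs. Characteristic $2$ means signs are irrelevant, which keeps the bookkeeping light. Write $S_{\{a,b\}} = Y_{ab}E_{ab} + Y_{ba}E_{ba}$. For $k \notin \{i_0, j_0\}$ we have
\[
[E_{i_0 j_0}, S_{\{j_0,k\}}] = Y_{j_0 k} E_{i_0 k},
\qquad
[S_{\{k,i_0\}}, E_{i_0 j_0}] = Y_{k i_0} E_{k j_0},
\]
because the other summand of each pair brackets to zero with $E_{i_0 j_0}$ when $k \neq i_0, j_0$. So whenever the relevant entry is nonzero, an elementary matrix can have its column index moved from $j_0$ to any $k$, or its row index moved from $i_0$ to any $k$. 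By \Cref{lem:block-law} the entry $Y_{j_0 k}$ is nonzero exactly when $k - j_0 \in D$. The hypothesis $\pm 1 \in D$ therefore guarantees that moving the column index by $\pm 1$ is always allowed. Likewise moving the row index from $i$ to $i\pm1$ uses $Y_{i\pm1, i}$, and this is nonzero because $\mp 1 \in D$.

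The plan is as follows. From $E_{ij}$ with $j' = j \pm 1 \neq i$, form $[E_{ij}, S_{\{j,j'\}}] = Y_{jj'}E_{ij'}$, which is nonzero. Iterating walks the column index around the cycle $\Z/m\Z$ in steps of $\pm 1$ while avoiding the row index $i$. This is possible since removing the single vertex $i$ from the cycle leaves a path containing every other vertex. Hence all $E_{i_0 k}$ with $k \neq i_0$ are obtained. In the same way, moving the row index, $[S_{\{i',i\}}, E_{ik}] = Y_{i'i}E_{i'k}$ for $i' = i\pm 1 \neq k$, gives every $E_{i'k}$ with $i' \neq k$. One must check that the extra term vanishes: $[E_{ik}, E_{ii'}]$ would require $k = i$, which is excluded, so $[S_{\{i',i\}}, E_{ik}]$ contains only the $E_{i'i}E_{ik}$ contribution. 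Similarly $[E_{ij}, E_{j'j}] = 0$ since $j \neq j'$ and $i \neq j'$.

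Finally, the brackets $[E_{ij}, E_{ji}] = E_{ii} + E_{jj}$ give the diagonal part of $\slfrak_m$. Together with all off-diagonal $E_{ij}$ this yields $\slfrak_m(\F_\ell)$, or its $\overline{\F}_q$-span as needed. The only delicate point is the index bookkeeping: checking that each bracket with a symmetric pair kills the unwanted summand, which requires the moving index to avoid the fixed one, and that the walk on the cycle minus one vertex reaches every target. I do not expect any genuine obstacle beyond this.
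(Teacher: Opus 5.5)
Your proposal is correct and follows essentially the same route as the paper's proof. It uses the brackets $[E_{ij},S_{\{j,l\}}]=Y_{jl}E_{il}$ and $[S_{\{l,i\}},E_{ij}]=Y_{li}E_{lj}$ with $l$ a $\pm1$ neighbour, the connectivity of $\Z/m\Z\setminus\{i\}$ under $\pm1$ to walk along rows and columns, and $[E_{ij},E_{ji}]$ for the diagonal. One small point, left implicit both in your write-up and in the paper: the column $i_0$ is not reached by row moves from row $i_0$ alone, so it needs one final column move taken from some other row.
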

\begin{proof}
For distinct $i, j, l$ we have
\[
  [E_{ij}, S_{\{j,l\}}] = Y_{jl} E_{il},
  \qquad
  [S_{\{l,i\}}, E_{ij}] = Y_{li} E_{lj}.
\]
These are nonzero elementary matrices precisely when $l - j, i - l \in D$. If we take $l = j \pm 1$ in the first bracket and use $\pm 1 \in D$, we therefore pass from $E_{ij}$ to $E_{i, j \pm 1}$ whenever the latter is off-diagonal. That is, within a fixed row we may shift the column by $\pm 1$, the only forbidden value being the row index $i$. Since $\Z/m\Z \setminus \{i\}$ is connected under $\pm 1$, every off-diagonal matrix in row $i$ can be reached. Symmetrically, taking $l = i \pm 1$ in the second bracket lets us shift the row by $\pm 1$ within a fixed column, the only forbidden value being the column index. Hence all off-diagonal elementary matrices lie in the generated algebra, and finally $[E_{ij}, E_{ji}] = E_{ii} - E_{jj}$ supplies the diagonal.
\end{proof}

Combining the two preceding lemmas gives the even characteristic analogue of \Cref{prop:block-generates}.

\begin{proposition} \label{prop:char2-block-generates}
Let $q$ be even and $m \geq 7$. The symmetric pairs $\{ S_{\{i,j\}} \mid i \neq j \}$ generate $\slfrak_m(\F_\ell)$ with probability at least $1 - 4 \ell^{-1}$.
\end{proposition}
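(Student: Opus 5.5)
We combine \Cref{lem:char2-seed} and \Cref{lem:char2-propagate} with a union bound over two bad events. Fix the triple $(i,j,k) = (0,1,3)$. Since $m \geq 7$, the six differences $\pm 1, \pm 2, \pm 3$ are distinct in $\Z/m\Z$, so \Cref{lem:char2-seed} applies to this triple.

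\emph{First bad event:} $T_{013} = 0$. By \Cref{lem:char2-seed} this has probability at most $2\ell^{-1}$. Off this event, the algebra generated by $S_{\{0,1\}}, S_{\{1,3\}}, S_{\{0,3\}}$ contains the elementary matrix $E_{03}$ (and also $E_{30}$).

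\emph{Second bad event:} $\{1,-1\} \not\subseteq D$, that is, $Z_1 = 0$ or $Z_{m-1} = 0$. Since $m \geq 7$, the indices $1$ and $m-1$ are distinct and nonzero. By \Cref{lem:block-law} each of $Z_1, Z_{m-1}$ is uniform on $\F_\ell$, so each vanishes with probability $\ell^{-1}$. The union bound gives probability at most $2\ell^{-1}$.

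Off both events, the algebra generated by all symmetric pairs contains $E_{03}$, and $\pm 1 \in D$. Then \Cref{lem:char2-propagate} shows that this elementary matrix together with the symmetric pairs generates $\slfrak_m(\F_\ell)$. The total failure probability is at most $2\ell^{-1} + 2\ell^{-1} = 4\ell^{-1}$, as claimed.

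There is little obstacle; the one point to check is that the two events are treated by a union bound rather than assuming independence. This is fine, since a union bound needs no independence. It is also worth noting that the entries $Y_{ij}$ appearing in the brackets are Frobenius images of the $Z_d$, so nonvanishing of a $Y_{ij}$ is equivalent to $j-i \in D$. Finally, the generated Lie algebra is over $\F_\ell$, or over $\overline{\F}_q$ after extension, matching the statement of \Cref{lem:char2-propagate}.
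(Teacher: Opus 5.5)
Your proposal is correct and matches the paper's proof: both use the seed lemma for the triple $(0,1,3)$ to get $E_{03}$ outside an event of probability $2\ell^{-1}$, then the propagation lemma once $Z_1, Z_{m-1} \neq 0$ (failing with probability $2\ell^{-1}$). A union bound then gives the total failure probability $4\ell^{-1}$.
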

\begin{proof}

By \Cref{lem:char2-seed}, we have $T_{0,1,3} \neq 0$ with probability at least $1 - 2 \ell^{-1}$, and then the generated algebra contains an elementary matrix. By \Cref{lem:char2-propagate}, this matrix together with the symmetric pairs generates $\slfrak_m(\F_\ell)$ as soon as $\pm 1 \in D$, that is $Z_1 \neq 0$ and $Z_{m-1} \neq 0$, which fails with probability $2 \ell^{-1}$. By a union bound, the symmetric pairs fail to generate $\slfrak_m(\F_\ell)$ with probability at most $4 \ell^{-1}$.
\end{proof}

\subsection{The main theorem in even characteristic}

We can now run the argument of \Cref{sec:obtaining-a-largish-block} in characteristic $2$. \Cref{power_diagonalizable} and \Cref{dim_adNXY} are characteristic free. Repeating the proof of \Cref{largish_block} with \Cref{thm:good-medium-factor} read in terms of sums and \Cref{prop:block-generates} replaced by \Cref{prop:char2-block-generates}, we obtain the following.

\begin{theorem} \label{thm:char2-largish-block}
For every $\gamma > 0$ there are constants $C, n_0 > 0$ such that the following holds. Let $q$ be even, let $n \geq n_0$, and let $A, B \in \slfrak_n(\F_q)$ be uniformly random. Then, with probability at least $1 - n^{-\gamma}$, there are $P \in \GL_n(\overline{\F}_q)$ and $m > \lceil C \log n \rceil$ such that the Lie algebra generated by $P^{-1} A P, P^{-1} B P$ contains the upper left block $\slfrak_m(\overline{\F}_q)$.
\end{theorem}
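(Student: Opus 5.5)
We follow the proof of \Cref{largish_block}, replacing differences by sums and the odd-characteristic block generation by \Cref{prop:char2-block-generates}.

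First, by \Cref{power_diagonalizable} choose $N$ with $A^{q^N}$ semisimple with the same eigenvalues as $A$. Conjugate by $P_0 \in \GL_n(\overline{\F}_q)$ to make $X = P_0^{-1}A^{q^N}P_0$ diagonal, and put $Y = P_0^{-1}BP_0$. By the characteristic $2$ version of \Cref{thm:good-medium-factor}, applied with exponent $2\gamma$, with probability at least $1-n^{-2\gamma}$ the polynomial $\chi_A$ has an irreducible factor $f$ of degree $m > \lceil C\log n\rceil$ such that every element of $\Sum(f)$ has multiplicity $1$ in $\Sum(\chi_A)$. Since $m \ge 2$ and $\Sum(f)$ has unique entries, $f$ has multiplicity one in $\chi_A$. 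We then permute the basis so that its roots occupy the first $m$ indices, ordered as $\lambda_i = \Fr^i(a)$ with the normalised eigenbasis of \Cref{lem:block-law}.

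Next, in characteristic $2$ we have $\ad_X E_{ij} = (\lambda_i+\lambda_j)E_{ij}$. For distinct block indices $i,j$, the value $\lambda_i+\lambda_j$ has multiplicity one in $\Sum(\chi_A)$, so the only positions of the full $n\times n$ matrix carrying this eigenvalue of $\ad_X$ are $(i,j)$ and $(j,i)$. Here one must check that a coincidence $\lambda_i+\lambda_j=\lambda_k+\lambda_l$ with $\lambda_k=\lambda_l$ gives value $0$, which is distinct from $\lambda_i+\lambda_j \neq 0$, so such diagonal-type positions cause no collision. Hence \Cref{dim_adNXY} and \Cref{lie_algebra_contains_B_delta} put $S_{\{i,j\}} = Y_{ij}E_{ij}+Y_{ji}E_{ji}$ into $P_0^{-1}\langle A,B\rangle P_0$ for all distinct $i,j$ in the block.

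By \Cref{lem:block-law}, the block entries are $Y_{ij}=\Fr^i(Z_{j-i})$ with $Z_1,\dots,Z_{m-1}$ independent and uniform in $\F_\ell$, where $\ell=q^m$. This holds whether $m<n$ or $m=n$. For $n$ large we have $m\ge 7$, so \Cref{prop:char2-block-generates} shows these symmetric pairs generate $\slfrak_m(\F_\ell)$ with probability at least $1-4\ell^{-1}$. Taking the $\overline{\F}_q$-span then gives the upper left block $\slfrak_m(\overline{\F}_q)$. Absorbing the permutation into $P$, the total failure probability is at most $n^{-2\gamma}+4q^{-m}\le n^{-2\gamma}+4\cdot 2^{-C\log n}\le n^{-\gamma}$ once $C$ and $n_0$ are large.

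The main obstacle is justifying that the results of \Cref{sec:differences-of-roots} and \Cref{sec:char-poly-of-random-matrices} hold for sums. The relation $\Fr^i\alpha+\Fr^j\alpha=\Fr^k\alpha+\Fr^l\alpha$, and its variants involving roots of $h$, must give a nonzero linearized polynomial of degree at most $3n/4$ whenever $\{i,j\}\neq\{k,l\}$. In characteristic $2$ the polynomial $x^i+x^j+x^k+1$ vanishes exactly when the unordered pairs coincide, so this holds. The counting arguments (Reiner's formula, the Kuttner--Wang estimate with $p=2$, and trace equidistribution) are characteristic free apart from the bound $q^{1-n(1-1/p)} = q^{1-n/2}$, which still suffices.
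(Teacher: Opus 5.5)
Your proposal is correct and follows essentially the paper's approach: the paper proves this theorem by rerunning the proof of \Cref{largish_block} with sums in place of differences and \Cref{prop:char2-block-generates} in place of \Cref{prop:block-generates}, which is exactly your argument. You also spell out details the paper leaves implicit, namely that zero-valued sums from repeated roots cannot collide with $\lambda_i+\lambda_j\neq 0$, that the $p=2$ case of the Kuttner--Wang term still suffices, and the final bound $n^{-2\gamma}+4q^{-m}\le n^{-\gamma}$.
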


The latter combined with absolute irreducibility exactly as in odd characteristic yields the even characteristic part of \Cref{thm:main}.

\bibliographystyle{alpha}
\bibliography{biblio}

\end{document}